\newlist{compactitem}{itemize}{3}
\setlist[compactitem]{topsep=0pt,partopsep=0pt,itemsep=0pt,parsep=0pt}
\setlist[compactitem,1]{label=\textbullet}
\setlist[compactitem,2]{label=---}
\setlist[compactitem,3]{label=*}
\newlist{compactdesc}{description}{3}
\setlist[compactdesc]{topsep=0pt,partopsep=0pt,itemsep=0pt,parsep=0pt}
\newlist{compactenum}{enumerate}{3}
\setlist[compactenum]{topsep=0pt,partopsep=0pt,itemsep=0pt,parsep=0pt}
\setlist[compactenum,1]{label=\arabic*}
\setlist[compactenum,2]{label=\alph*}
\setlist[compactenum,3]{label=\roman*}
\newtheorem{thm}{Theorem}
\newtheorem{cor}{Corollary}
\newtheorem{lem}{Lemma}
\newtheorem{asm}{Condition}
\newtheorem{examp}{Example}
\newtheorem{prop}{Proposition}
\newtheorem{defn}{Definition}
\newtheorem{cond}{Condition}
\newtheorem{rem}{Remark}
\newtheorem*{theorem*}{Theorem}
\pgfplotsset{
	compat=1.8,
	tick label style = {font=\scriptsize},
	every axis label = {font=\scriptsize},
	legend style = {font=\scriptsize},
	label style = {font=\scriptsize},
	/pgfplots/xlabel near ticks/.style={
		/pgfplots/every axis x label/.style={
			at={(ticklabel cs:0.5)},anchor=near ticklabel
		}
	},
	/pgfplots/ylabel near ticks/.style={
		
		/pgfplots/every axis y label/.style={
			at={(ticklabel cs:0.5)},rotate=90,anchor=near ticklabel}
	}
	
}
\renewcommand{\paragraph}{%
	\@startsection{paragraph}{4}%
	{\z@}{1ex \@plus 1ex \@minus .2ex}{-1em}%
	{\normalfont\normalsize\bfseries}%
}
\newcommand{\dashrule}[1][black]{%
  \color{#1}\rule[\dimexpr.5ex-.2pt]{4pt}{.4pt}\xleaders\hbox{\rule{4pt}{0pt}\rule[\dimexpr.5ex-.2pt]{4pt}{.4pt}}\hfill\kern0pt%
}
\renewcommand\paragraph{\@startsection{paragraph}{4}{\z@}%
{1.15ex \@plus.3ex \@minus.2ex}%
	{-0.8em}%
	{\normalfont\normalsize\bfseries}}
\begin{document}

\setlength{\abovedisplayskip}{6pt}
\setlength{\belowdisplayskip}{6pt}
\setlength{\abovedisplayshortskip}{0pt}
\setlength{\belowdisplayshortskip}{6pt}

\title{Blind Dynamic Resource Allocation in Closed Networks \\ via Mirror Backpressure}
%
\author{Yash Kanoria\thanks{Decision, Risk and Operations Division, Columbia Business School, Email: \texttt{ykanoria@columbia.edu}} \and Pengyu Qian\thanks{Krannert School of Management, Purdue University, Email: \texttt{pqian20@gmail.com}}.}
%
\date{}

\maketitle
%


\begin{abstract}
We study the problem of maximizing payoff generated over a period of time in a general class of closed queueing networks with a finite, fixed number of supply units which circulate in the system. Demand arrives stochastically, and serving a demand unit (customer) causes a supply unit to relocate from the ``origin'' to the ``destination'' of the customer. The key challenge is to manage the distribution of supply in the network. We consider general controls including customer entry control, pricing, and assignment. Motivating applications include shared transportation platforms and scrip systems.

Inspired by the mirror descent algorithm for optimization and the backpressure policy for network control, we introduce a rich family of \emph{Mirror Backpressure} (MBP) control policies. The MBP policies are simple and practical, and crucially do not need any statistical knowledge of the demand (customer) arrival rates (these rates are permitted to vary in time). Under mild conditions, we propose MBP policies that are provably near optimal. Specifically, our policies lose at most $O(\frac{K}{T}+\frac{1}{K} + \sqrt{\eta K})$ payoff per customer relative to the optimal policy that knows the demand arrival rates, where $K$ is the number of supply units, $T$ is the total number of customers over the time horizon, and {$\eta$ is the demand process' average rate of change per customer arrival.}
An adaptation of MBP is found to perform well in numerical experiments based on data from ride-hailing.
\end{abstract}

\noindent{\bf Keywords:} control of queueing networks; backpressure; mirror descent; no-underflow constraint.


\section{Introduction}\label{sec:intro}
The control of complex systems with circulating resources such as shared transportation platforms and scrip systems has been heavily studied in recent years.
The hallmark of such systems is that serving a demand unit causes a (reusable) supply unit to be relocated.
Closed queueing networks (i.e., networks where a fixed number of supply units circulate in the system) provide a powerful abstraction for these applications \citetext{\citealp*{banerjee2016pricing}, \citealp{braverman2016empty}}. The key challenge is \emph{managing the distribution of supply} in the network.
A widely adopted approach for this problem is to solve the deterministic optimization problem that arises in the continuum limit (often called \emph{the static planning problem}), and show that the resulting control policy is near-optimal in a certain asymptotic regime.
However, this approach only works under the restrictive assumption that \emph{the system parameters (demand arrival rates) are precisely known}, and {most existing works assume \emph{time invariant} parameters}. 
%
%

In this paper, we relax \emph{both} assumptions. 
We propose a family of \emph{simple} control policies that are \emph{blind} in that they use {no} prior knowledge of demand arrival rates, and prove strong transient and steady state performance guarantees for these policies, for time-varying demand arrival rates. Strong  performance in simulations backs up our theoretical findings.

\smallskip
\textbf{Informal description of our model.}
Our main setting is one where the control levers include entry control and flexible assignment of resources, with time-varying demand arrival rates.
Later we allow dynamic pricing control 
and show that our machinery and guarantees extend seamlessly. For simplicity, we introduce here the special case of our main model with entry control only. 
We consider a closed queueing network that consists of a set of nodes (locations) $V$, and a fixed number $K$ of supply units that circulate in the system.
Demand units with different origin-destination pairs $(j,k)$ arrive stochastically over slotted time with some time-varying arrival rates which are unknown to the controller.
The controller dynamically decides whether to admit each incoming demand unit. 
Each control decision to admit a demand unit has two effects: it generates a certain \textit{payoff} $w_{jk}$ depending on the origin-destination pair of the demand unit, and it causes a supply unit to \textit{relocate} from the origin $j$ to the destination $k$ instantaneously.
The goal of the system is to maximize the collected payoff over a period of time.

Notably, the greedy policy, which admits a demand unit if a supply unit is available in its pick-up neighborhood, is generically far from optimal: even as $K \rightarrow \infty$, the optimality gap per demand unit of this policy is $\Omega(1)$ even in steady state; see Appendix~\ref{append:greedy}. The intuition is that some nodes have no available supply an $\Omega(1)$ fraction of the time in steady state under the greedy policy. 
Furthermore, if demand arrival rates are imperfectly known, any state independent policy \citep*[such as that of][]{banerjee2016pricing} generically suffers a steady state optimality gap per demand unit of $\Omega(1)$; see \citet*[][Proposition~4]{banerjee2018SMW}. 

\textbf{Our control policy.} The {system state} at time $t$ is the vector of queue lengths $\bq[t]=[q_1[t],\cdots,q_{|V|}[t]]^{\T}$, which sums up to the total supply $\mathbf{1}^{\T}\bq[t]=K$; we work with a normalized queue length vector $\nq$ satisfying $\mathbf{1}^{\T} \nq[t] = 1$.
Our proposed \emph{Mirror Backpressure} (MBP) policy makes entry control decisions according to the following simple rule: Admit a demand unit with origin  node $j$ and destination node $k$ if and only if the \emph{score} $w_{jk} + f(\bar{q}_j[t]) - f(\bar{q}_k[t]) \geq 0$ and $\bar{q}_j[t] > 0$. Here, $f(\cdot)\triangleq - \sqrt{m}\cdot \bar{q}_j^{-\frac{1}{2}}$ is a suitably chosen \emph{congestion function}, a monotone increasing function which causes the policy to be generous in allowing use of supply from long queues while protecting supply in near-empty queues. See Figures~\ref{fig:reduced_cost} and \ref{fig:mirror_map} for illustrations. Note that the MBP policy is agnostic to demand arrival rates.

\begin{figure}
	\centering
	\begin{minipage}{0.45\textwidth}
		\centering
		\includegraphics[width=\textwidth]{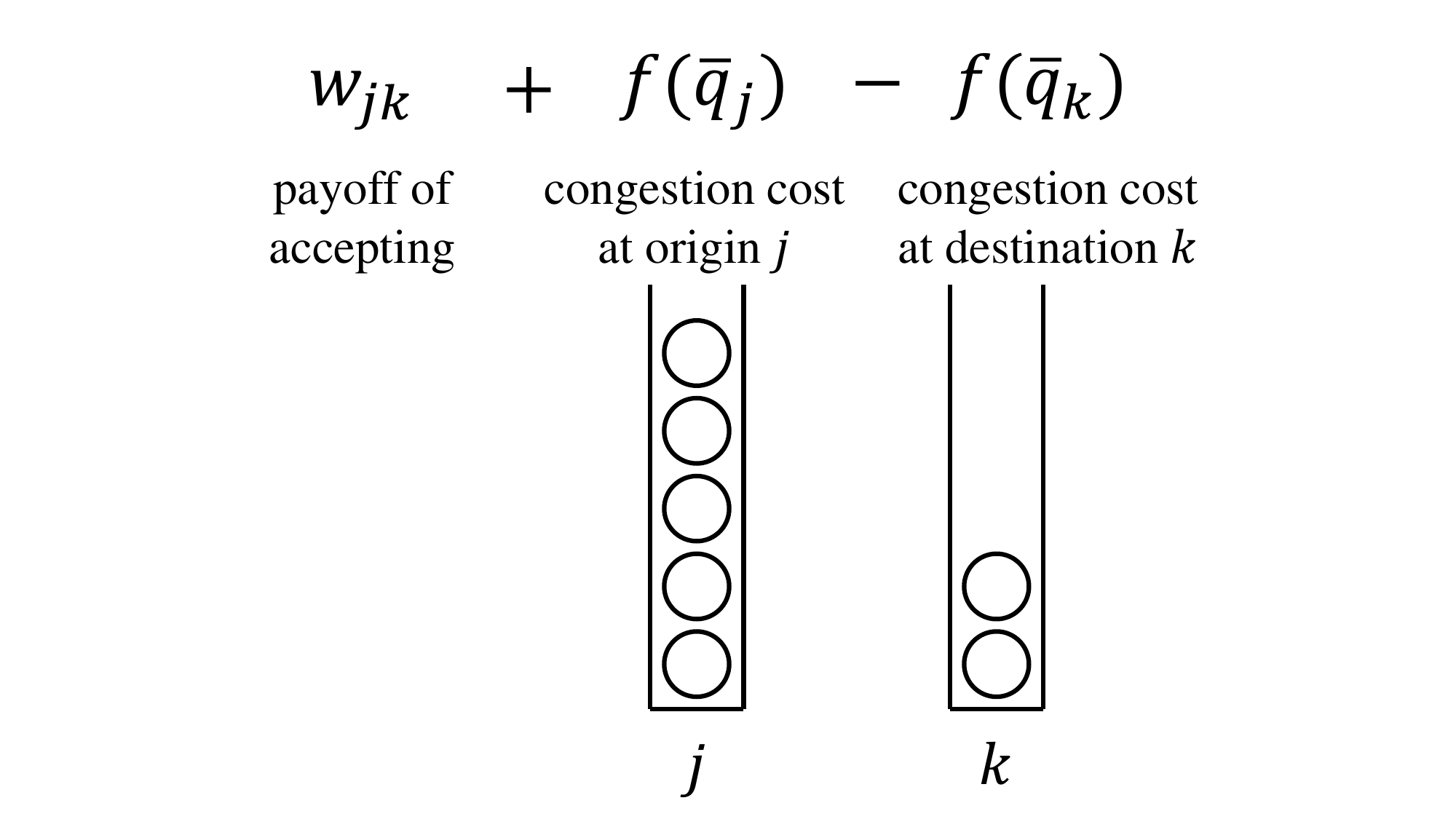} 
		\caption{Our MBP policy admits a demand with origin $j$ and destination $k$ if and only if the score illustrated above is non-nonnegative and $j$ has at least one supply unit.
		}
\label{fig:reduced_cost}
	\end{minipage}\hfill
	\begin{minipage}{0.45\textwidth}
		\centering
\resizebox{\textwidth}{4cm}{\includegraphics{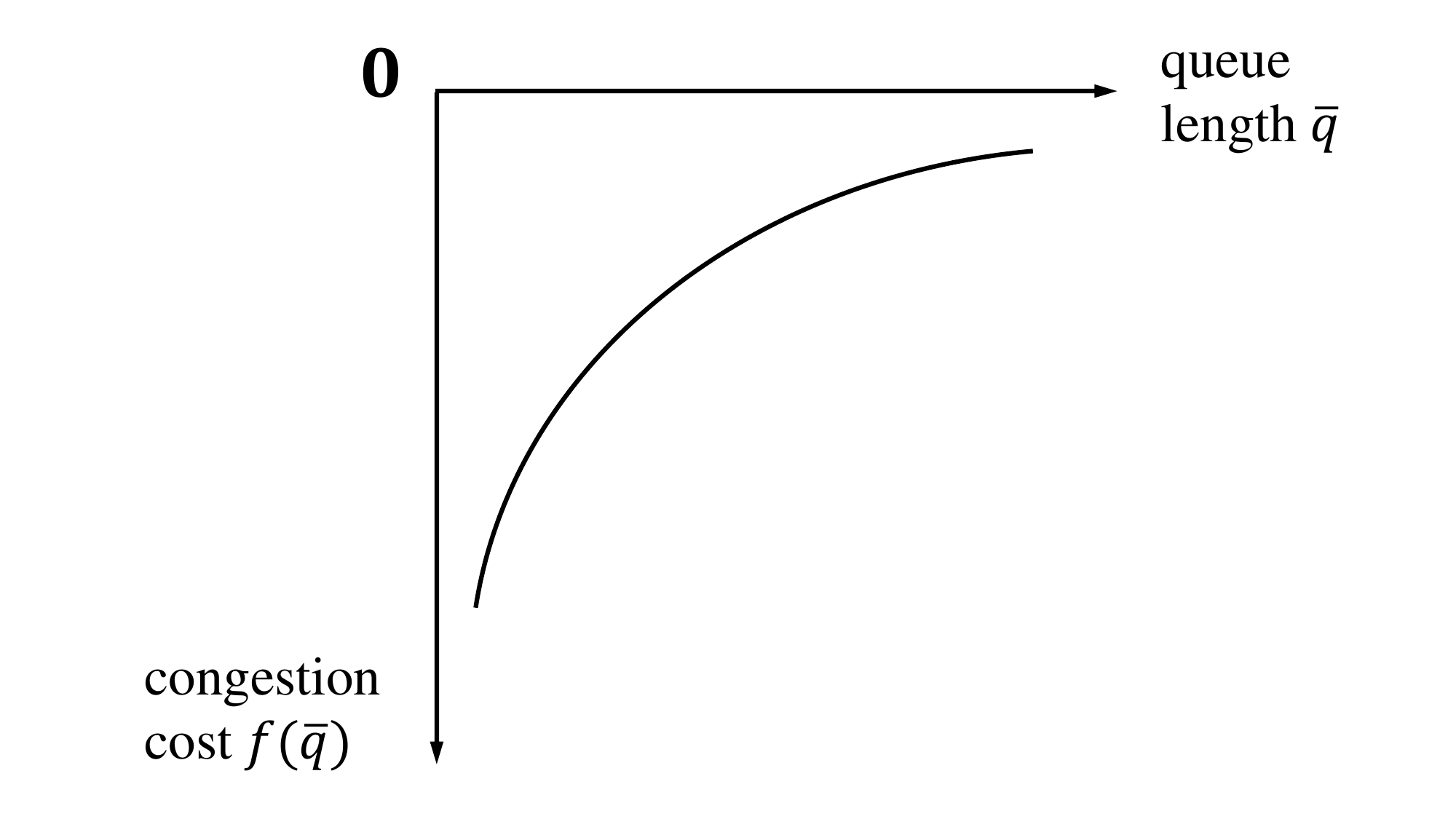}} 
		\caption{An example of a congestion function (a mapping from queue lengths to congestion costs) which aggressively protects supply units in near-empty queues. 
		}\label{fig:mirror_map}
	\end{minipage}
\end{figure}

\smallskip
\textbf{Performance guarantee.}
%
We show that, under a mild connectivity assumption on the network, the MBP policy is near optimal. Specifically, we show that our policies lose payoff (per demand unit) 
at most $O\left(\frac{K}{T} + \frac{1}{K} + \sqrt{\eta K}\right)$ relative to the optimal policy that knows the demand arrival rates, 
where $K$ is the number of supply units, $T$ is the number of demand units that arrive during the period of interest, and {$\eta$ is the demand process' average rate of change per customer arrival}. 
Our result is \textit{non-asymptotic}, i.e., our performance guarantee holds for finite $K$ and $T$,  and thus covers both transient and steady state performance.
In particular, for stationary demand arrivals, taking $T\to\infty$, we obtain a steady state optimality gap of $O(\frac{1}{K})$. 
Our bound further provides a guarantee on transient performance: the horizon-dependent term $K/T$ in our bound on optimality gap is small if the total number of arrivals $T$ over the horizon is large compared to the number of supply units $K$.
Our policies retain their good performance if the {demand arrival rate's average rate of change over $K$ periods}, i.e., the term $\eta K$, is small.
We find that our bound is invariant to system size in a relevant scaling regime (the large market regime) where the number of supply units $K$ increases proportionally to the demand arrival rates (see the discussion after Theorem~\ref{thm:main_regret}).
In this regime, $T^{\textup{real}}\triangleq T/K$ is an invariant as $K\to\infty$, which can be interpreted as the time horizon measured in physical time. Let $\zeta \triangleq \eta K$ be the average rate of change of demand with respect to physical time. 
We can rewrite our bound on the optimality gap as
	$
	O\left(\frac{1}{T^{\textup{real}}} +
	\frac{1}{K} + \sqrt{\zeta} \right ) \xrightarrow{K \to \infty} O\left(\frac{1}{T^{\textup{real}}} + \sqrt{\zeta}\right )
	$.


\textbf{Motivation for our control policy.} 
Our control approach is inspired by the celebrated backpressure methodology of \citet{tassiulas1992stability} for the control of queueing networks. Backpressure simply uses queue lengths as congestion costs (the shadow prices to the flow constraints; the flow constraint for each queue is that the inflow must be equal to the outflow in the long run), and chooses a control decision at each time which maximizes the myopic payoff inclusive of congestion costs. Concretely, in the special case where entry control is the only leverage, backpressure admits a demand if and only if the payoff of serving the demand plus the origin queue length exceeds the destination queue length.
This simple approach has been used very effectively in a range of 
settings arising in cloud computing, networking, etc.; see, e.g., \cite{georgiadis2006resource}.
Backpressure is provably near-optimal (in the large market limit) in many settings where payoffs accrue from serving jobs, because it has the property of executing dual stochastic gradient descent (SGD) on the controller's deterministic (continuum limit) optimization problem.
However, this property breaks down when the so-called ``no-underflow constraint'' binds, making it challenging to use backpressure in our setting. 


The \emph{no-underflow constraint} is that each decision to admit a demand unit needs to be backed by an available supply unit at the pick-up node of the demand. This constraint binds in our setting under backpressure because we model non-zero payoffs from serving a customer, as a result of which the congestion-adjusted myopic payoff can be positive even if the origin queue is empty (see Appendix~\ref{append:bp_fails} for a discussion). 
Moreover, several popular workarounds to this issue fail in our setting; see Section~\ref{subsec:lit-review}.
Our Mirror Backpressure (MBP) policy generalizes the celebrated backpressure (BP) policy. Whereas BP uses the queue lengths as congestion costs, MBP employs a flexibly chosen \emph{congestion function} $f(\cdot)$ to translate from queue lengths to congestion costs.
The mirror map can be flexibly chosen to fit the problem geometry arising from the no-underflow constraints. Roughly, we find better performance with congestion functions which are steep for small queue lengths, the intuition being that this makes MBP more aggressive in protecting the shortest queues (and hence preventing underflow). In case of finite buffers, we use congestion functions which moreover increase  steeply as the queue length approaches buffer capacity, to prevent buffer overflow (Section~\ref{subsec:extend-barrier}).

\textbf{Analytical approach.}
We show that MBP has the property that it executes dual \emph{stochastic mirror descent} \citep{nemirovsky1983problem,beck2003mirror} on the platform's continuum limit optimization problem, which generalizes the SGD property of backpressure.
We develop a general machinery to prove performance guarantees for MBP: 
We use the antiderivative of the chosen congestion function as the Lyapunov function in our analysis,
and adapt 
the Lyapunov drift method from the network control literature to obtain sharp bounds on the suboptimality caused by the no-underflow constraint. 
Our analysis exploits the structure of the platform's continuum limit optimization problem 
(see Section~\ref{sec:proof_sketch}). 
Our work fits into the broad literature on the control of stochastic processing networks \citep{harrison2000brownian}.  

\smallskip
\textbf{Applications.} Our models 
include a number of ingredients which are central in many applications.
We illustrate its versatility by discussing the application to shared transportation systems (Section~\ref{sec:shared-transportation}) and the application to scrip systems (Appendix~\ref{sec:scrip-systems}). These applications and the relevant settings in the paper are summarized in Table~\ref{tab:application_model}.

\begin{table}[htbp!]
	\small
	\centering
	\begin{tabular}{ccc}
		\toprule
		\emph{Application} & \emph{Control lever} & \emph{Corresponding setting in this paper} \\
		\midrule
		Ride-hailing in USA, Europe & Pricing \& Dispatch & Joint pricing-assignment\\
		Ride-hailing in China & Admission \& Dispatch & Joint entry-assignment\\
		%
		%
		Bike sharing & Reward points & Pricing (finite buffer queues) \\
		Scrip systems & Admission \& Provider selection   & Joint entry-assignment\\
		\bottomrule
	\end{tabular}
	\caption{Summary of applications of our model. 
	}
	\label{tab:application_model}
\end{table}

Shared transportation systems include ride-hailing and bike sharing systems. Here the nodes in our model correspond to geographical locations, while supply units and demand units correspond to vehicles and customers, respectively. Bike sharing systems dynamically incentivize certain trips using point systems to minimize out-of-bike and out-of-dock events caused by demand imbalance. Our pricing setting is relevant for the design of a dynamic incentive program for bike sharing; in particular, it allows for a limited number of docks. Ride-hailing platforms make dynamic decisions to optimize their objectives (e.g., revenue, welfare, etc.). For ride-hailing, our pricing-assignment model is relevant  in regions such as North America, and our entry-assignment control model is relevant in regions where dynamic pricing is undesirable like in China.
We perform simulations of ride-hailing and find that our MBP policy, suitably adapted to account for positive travel times, performs well (Section~\ref{subsec:numerics-ridehailing}).

	A scrip system is a nonmonetary trade economy where agents use scrips (tokens, coupons, artificial currency) to exchange services (because
monetary transfer is undesirable or impractical), e.g., for babysitting or kidney exchange.
	A key challenge in these markets is the design of the
admission-and-provider-selection rule:
	If an agent is running low on scrip balance, should they be allowed to request services? If yes, and if there are several possible providers for a trade, who should be selected as the service provider?
	In Appendix~\ref{sec:scrip-systems}, we show that a natural model of a scrip system is a special case of our entry-assignment control setting, yielding a near optimal admission-and-provider-selection control rule.

\subsection{Literature Review}
\label{subsec:lit-review}

\textbf{MaxWeight/backpressure policy.}
Backpressure \citep[also known as MaxWeight, see][]{tassiulas1992stability,georgiadis2006resource} are well-studied dynamic control policies in constrained queueing networks for workload minimization \citep{stolyar2004maxweight,dai2008asymptotic}, queue length minimization \citep{eryilmaz2012asymptotically} and utility maximization \citep{eryilmaz2007fair},  etc.
Attractive features of MaxWeight/backpressure policies include their simplicity and provably good performance, and that arrival/service rate information is not required beforehand. 

The main challenge in using backpressure in settings with payoffs is the no-underflow constraints, as described earlier. Several works make strong assumptions to ensure the constraint does not bind: For example, \cite{dai2005maximum} assume that the network satisfies a so-called Extreme Allocation Available (EAA) condition; \cite{stolyar2005maximizing} assumes that payoffs are generated only by the source nodes, which have infinite queue lengths. 	\cite{huang2011utility} consider networks where the no-underflow constraint does bind, but the payoffs are generated only by the output nodes.
{In our setting, payoffs are essential (there is value generated by serving
a customer) and can be generated by any node. Therefore, the no-underflow constraint binds, and none of the aforementioned assumptions hold for our network.}
Another workaround is a machinery that introduces \emph{virtual queues}; see, e.g., \cite{jiang2009stable}. The idea is to introduce a ``fake'' supply unit into the network each time the constraint binds, to preserve the SGD property of backpressure.
{However, in open queueing networks, these fake supply units eventually leave the system, and so have a small effect (under appropriate assumptions). In our closed network setting, these fake supply units, once created, never leave and so would build up in the system, irreparably damaging performance.}
\yka{Mention in a couple of sentences why DMW and PMW do not work in your setting. I prefer not to have a separate section discussing these side points, given that ultimately an approach from the literature, namely \cite{gupta2020interior}, is what we adapt.}

Most of this literature considers the open queueing networks setting, where packets/jobs enter and leave, and there is much less work on closed networks.
An exception is a recent paper on assignment control of closed networks by \citet*{banerjee2018SMW}, which shows the large deviations optimality of ``scaled'' MaxWeight policies. 
%


Similar to MBP, several works use nonlinear functions of queue lengths for decision making to improve upon the performance of Backpressure in a variety of  contexts. 
\cite{walton2015concave} proposes Concave Switching policies that generalize 
Backpressure to address a weakness of Backpressure in fixed route multihop networks, namely, that
the number of queues it needs to maintain grows rapidly in network size. 
\cite{neely2006super} uses exponential functions of queue length as congestion functions to achieve the optimal delay-utility tradeoff. \cite{gupta2020interior} use non-linear functions of the state variables in the context of online stochastic bin-packing to obtain distribution-oblivious algorithms with sublinear additive suboptimality. They also identify the connection to mirror descent as we do for MBP.
\yka{See https://pubsonline.informs.org/doi/suppl/10.1287/opre.2019.1914/suppl_file/opre.2019.1914.sm1.pdf EC.2.2, it's nearly a carbon copy of our section describing the connection to mirror descent. I think the motivation for using non-linearity is quite similar to our motivation too. Hence my edits and why I moved it to the non-linear queue lengths para.}

\textbf{Mirror Descent.} Mirror descent (MD) is a generalization of the gradient descent algorithm for optimization (\citealt{nemirovsky1983problem,beck2003mirror}). 
Recently, there have been several works that use online MD to solve other online decision-making problems, including the $k$-server problem (\citealt{bubeck2018k}), and various online packing and covering problems, e.g.,  \cite{gupta2016experts,agrawal2014fast}.  

\textbf{Applications: shared transportation, scrip systems.}
Most of the ride-hailing literature studied controls that require the exact knowledge of system parameters:
\citet{ozkan2016dynamic} studied payoff maximizing assignment control in an open queueing network model,
\citet{braverman2016empty} derived the optimal state independent routing policy that sends empty vehicles to under-supplied locations,
\citet*{banerjee2016pricing} adopted the Gordon-Newell closed queueing network model and considered various controls that maximize throughput, welfare or revenue.
\citet{balseiro2019dynamic} considered a dynamic programming based approach for dynamic pricing for a specific network of star structure. (\citet{ma2019spatio} studied the somewhat different issue of ensuring that drivers have the incentive to accept dispatches by setting prices which are sufficiently smooth in space and time, in a model with no demand stochasticity.)
\citet*{banerjee2018SMW} which assumes a near balance condition on demands and equal pickup costs may be the only paper in this space that does not require knowledge of system parameters.
%
Comparing with \citet*{banerjee2016pricing} which obtains a steady state optimality gap of $O(\frac{1}{K})$ (in the absence of travel times) assuming \emph{perfect} knowledge of demand arrival rates which are assumed to be \emph{stationary}, our control policy achieves the same steady state optimality gap with \emph{no} knowledge of demand arrival rates, and further achieves a \emph{transient} optimality gap under \emph{time-varying} demand arrival rates of $O(\frac{K}{T} + \frac{1}{K} +\sqrt{\eta K})$ for a finite number of arrivals $T$ and average changes of up to $\eta$ per arrival in demand arrival rates. 
Some of these papers are able to formally handle travel delays: \citet{braverman2016empty}, \citet{banerjee2016pricing}, and \citet*{banerjee2018SMW} prove theoretical results for the setting with i.i.d. geometric/exponential travel delays; \citet{ma2019spatio} consider deterministic travel delays. On the other hand, \citet{balseiro2019dynamic} ignores travel delays in their theory and later heuristically adapt their policy to accommodate travel delay (the present paper follows a similar approach). 
\cite{ozkan2016dynamic} is the only paper among these  which (like the present paper) allows time-varying demand. 

Our model can be applied to the design of dynamic incentive programs for bike sharing systems \citep{chung2018bike} and service provider rules for scrip systems \citep{johnson2014analyzing,agarwal2019market}.
For example, the ``minimum scrip selection rule'' proposed in \cite{johnson2014analyzing} is a special case of our policy, and our methodology leads to control rules in much more general settings as described in Appendix~\ref{sec:scrip-systems}. 

\yka{commented a bunch of stuff about scaling of previous results.}

\textbf{Other related work.}
A related stream of research studies online stochastic bipartite matching, see, e.g., \cite{caldentey2009fcfs,adan2012loss,buyic2015approximate,mairesse2016stability};
the main difference between their setting and ours is that we study a \emph{closed} system where supply units never enter or leave the system. Network revenue management is a classical set of (open network) dynamic resource allocation problems, e.g., see \cite{gallego1994optimal,talluri2006theory}, and recent works, e.g., \cite{bumpensanti2018re}.
\cite{jordan1995principles,desir2016sparse,shi2015process} and others study how process flexibility can facilitate improved performance, analogous to our use
of assignment control to maximize payoff (when all pickup costs are equal), but the focus there is more on network design than on control policies.
\yka{cite process flexibility, no? shi, wei, zhong could be our reviewers. same for weina wang.}
Again, this is an open network setting in that each supply unit can be used only once. 

\subsection{Organization of the Paper}
The remainder of our paper is organized as follows.
Section \ref{sec:model} presents our main model of joint entry-assignment control with time-varying demand arrival rates, and the platform objective.
Section \ref{sec:mbp} introduces the Mirror Backpressure policy and presents our main theoretical result, i.e., a performance guarantee for the MBP policies.
Section \ref{sec:proof_sketch} outlines the proof of our main result.
In Section \ref{sec:extension}, we provide MBP policies for the joint pricing-assignment control setting, demonstrating the versatility of our approach.
In Sections~\ref{sec:shared-transportation} we discuss the applications to shared transportation systems.

\textbf{Notation.} All vectors are column vectors if not specified otherwise.
The transpose of vector or matrix $\bx$ is denoted as $\bx^{\T}$.
We use $\boe_i$ to denote the $i$-th unit column vector with the $i$-th coordinate being $1$ and all other coordinates being $0$, and $\mathbf{1}$ ($\bzero$) to denote the all $1$ ($0$) column vector, where the dimension of the vector will be indicated in the superscript when it is not clear from the context.

\section{The Model: Joint Entry-Assignment Control}\label{sec:model}
In this section, we formally define our model of joint entry-assignment control in closed queueing networks.
%
We consider a finite-state Markov chain model with slotted time $t= 0, 1, 2, \dots$, where a fixed number (denoted by $K$) of identical \textit{supply units} circulate among a set of 
\textit{nodes} $V$ (locations), with $m \triangleq |V|> 1$. In our model, $t$ will capture the number of demand units (customers) who have arrived so far.

\smallskip
\textbf{Queues (system state).}
At each node $j \in V$, there is an infinite-buffer queue of supply units.
(Section~\ref{subsec:extend-barrier} shows how to seamlessly incorporate finite-buffer queues.) 
The \emph{system state} is the vector of queue lengths at time $t$, which we denote by $\bq[t]=[q_1[t],\cdots,q_m[t]]^{\T}$.
Denote the state space of queue lengths by $\Omega_{K} \triangleq \{\bq:\bq\in\mathbb{Z}^{m}_+,\mathbf{1}^{\T}\bq=K\}$, and the normalized state space by $\Omega\triangleq\{\bq:\bq\in\mathbb{R}^{m}_+,\mathbf{1}^{\T}\bq=1\}$.

\smallskip
\textbf{Demand types and time-varying arrival process.}
We assume exactly one demand unit (customer) arrives at each period $t$, and denote her abstract \emph{type} by $\tau[t] \in \cT$, and the type for the demand unit is drawn from distribution $\bphi^t = (\phi_\tau^t)_{\tau \in \cT}$, independent of demands in earlier periods.\footnote{
{Analyses of i.i.d. unit demand arrivals have been shown to generalize easily to more general arrival processes, e.g., Markovian arrivals with bounded demand units per period as in \cite{huang2009delay}, though at a significant notational burden. 
Given the aforementioned precedent, we reason that the cost of carrying the reader through this generalization exceeds the benefit of doing so, and assume i.i.d. unit demand arrivals throughout the paper.
}
}
Note that the demand arrival rate (i.e., type distribution) can be time-varying.
Importantly, the system can observe the type of the arriving demand at the beginning of each time slot, but \emph{the probabilities (arrival rates) $\bphi^t$ are not known.}
Thus we substantially relax the assumption in previous works that the system has exact knowledge of demand arrival rates
\citetext{\citealp{ozkan2016dynamic}, \citealp*{banerjee2016pricing}, \citealp{balseiro2019dynamic}}.

Each demand type $\tau\in \cT$ has a pick-up neighborhood $\cP(\tau)\subset V, \cP(\tau) \neq \emptyset$ and drop-off neighborhood $\cD(\tau)\subset V, \cD(\tau) \neq \emptyset$.
The sets $(\cP(\tau))_{\tau \in V}$ and $(\cD(\tau))_{\tau \in V}$ are model primitives. (In shared transportation systems, each demand type $\tau$ may correspond to an (origin, destination) pair in $V^2$, with $\cP(\tau)$ being nodes close to the origin and $\cD(\tau)$ being nodes close to the destination.) 

\smallskip
\textbf{Temporal uncertainty of demand arrival rates.}
We define the following notion of \emph{$\eta$-slowly varying demand} that characterizes the average amount of change of demand arrival rates over a finite time horizon.
\begin{defn}
	\label{defn:variation-budget}
	We say that demand arrival rates vary \emph{$\eta$-slowly} over a finite horizon $T$ if:
	\begin{align*}
		\frac{1}{T-1}\sum_{t=1}^{T-1}\lVert \bphi^{t+1} - \bphi^t\rVert_1 \leq \eta\, .
	\end{align*}
\end{defn}

\smallskip
\textbf{Control and payoff.}
At time $t$, after observing the demand type $\tau[t]=\tau$, the system makes a decision
\begin{align}\label{eq:jea-control}
	(x_{j\tau k}[t])_{j\in\cP(\tau),k\in\cD(\tau)}\in \{0,1\}^{|\cP(\tau)|\cdot|\cD(\tau)|}\quad
	\textup{such that}
	\quad
	\sum_{j\in\cP(\tau),k\in\cD(\tau)}x_{j\tau k}[t]\leq 1\, .
\end{align}
Here $x_{j\tau k}[t]=1$ stands for the platform choosing pick-up node $j\in \mathcal{P}(\tau)$ and drop-off node $k \in \mathcal{D}(\tau)$, causing a supply unit to be relocated from $j$ to $k$.
The constraint in \eqref{eq:jea-control} captures that each demand unit is either served by one supply unit, or not served.
With $x_{j\tau k}[t]=1$, the system collects payoff $v[t] = w_{j\tau k}$. 
Without loss of generality, we assume the scaling
$
\max_{\tau\in \cT, j \in \cP(\tau), k \in \cD(\tau)} |w_{j\tau k}|\  = \, 1\,  . $
Because the queue lengths are non-negative by definition, we require the following \emph{no-underflow constraint} to be met at any $t$:
\begin{align}\label{eq:no_underflow}
	x_{j\tau k}[t] = 0\quad \textup{if} \quad
	q_{j}[t]=0\, .
\end{align}
As a convention, let $x_{j\tau'k}=0$ if $\tau'\neq \tau$.\yka{Revisit later.}

%
A \emph{feasible policy} specifies, for each time $t \in \{1, 2, \dots\}$, a mapping from the history so far of demand types $\big(\tau[t']\big)_{t' \leq t}$ and states $(\bq[t'])_{t' \leq t}$ to a decision $x_{j\tau k}[t]\in\{0,1\}^{|\cP(\tau)|\cdot|\cD(\tau)|}$ satisfying \eqref{eq:no_underflow}, where $\tau=\tau[t]$ as above.
We allow $x_{j\tau k}[t]$ to be randomized, although our proposed policies will be deterministic.
The set of feasible policies is denoted by $\mathcal{U}$.

\smallskip
\textbf{System dynamics and objective.}
The dynamics of system state $\bq[t]$ is as follows:
\begin{align}\label{eq:system-dynamics-jea}
	\bq[t+1] = \bq[t] + \sum_{\tau\in \cT, j \in \cP(\tau), k \in \cD(\tau)}
	(-\boe_{j} + \boe_{k}) x_{j\tau k}[t]\, .
\end{align}
We use $v^{\pi}[t]$ to denote the payoff collected at time $t$ under control policy $\pi$.
Let $W^{\pi}_T$ denote the average payoff per period (i.e., per customer) collected by policy $\pi$ in the first $T$ periods, and let $W^*_T$ denote the optimal payoff per period in the first $T$ periods over all admissible policies.
Mathematically, they are defined respectively as:\footnote{Note that in \eqref{eq:obj_finite} the expectations are taken over the randomness in arrivals and (possibly) control decisions, and that the supremum is well-defined because the payoffs are bounded from above.}
\begin{align}\label{eq:obj_finite}
W^{\pi}_T \triangleq \;
\min_{\bq\in \Omega_K}\;
\frac{1}{T}\sum_{t=1}^{T}\mathbb{E}[v^{\pi}[t]|\bq[0]=\bq]\, ,
\qquad
W^*_T \triangleq\; \sup_{\pi \in \mathcal{U}}\;\max_{\bq\in \Omega_K}\;
\frac{1}{T}\sum_{t=1}^{T}\mathbb{E}[v^{\pi}[t]|\bq[0]=\bq]\, .
\end{align}
Define the infinite-horizon per period payoff $W^{\pi}$ collected by policy $\pi$ and the optimal per period payoff over all admissible policies $W^{*}$ respectively as:
\begin{align}\label{eq:obj_inf}
W^{\pi}\triangleq\;  \liminf_{T\to\infty}\; W^{\pi}_T\, ,
\qquad
W^* \triangleq \limsup_{T\to\infty}\; W^{*}_T\, .
\end{align}
We measure the performance of a control policy $\pi$ by its finite- and infinite-horizon per-customer \emph{optimality gap} (``loss''), defined respectively as:
\begin{align}
	L^\pi_T = W^*_T - W^{\pi}_T
	\qquad
	\textup{and}
	\qquad
	L^\pi = W^*-W^{\pi}\, .
\label{eq:optimality_gaps_defn}
\end{align}
Note that we consider the worst-case initial system state when evaluating a given policy, and the best initial state for the optimal benchmark; see \eqref{eq:obj_finite}.
Such a definition of optimality gap provides a conservative bound on policy performance and avoids the (unilluminating) discussion of the dependence of performance on initial state.

We make the following mild connectivity assumption on the primitives $(\{\bphi^t\}_{t\leq T},\cP,\cD)$.
\begin{cond}[Strong connectivity of ($\{\bphi^t\}_{t\leq T},\cP,\cD$)]\label{cond:str_connect_jea}
	For any demand arrival rates $\bphi$, define the connectedness of triple ($\bphi$,$\cP$,$\cD$) as
	\begin{align}\label{eq:strong-connectivity-jea}
		\alpha(\bphi,\cP,\cD)
		\triangleq
		\min_{S\subsetneq V,S\neq\emptyset}
		\sum_{\tau\in \cP^{-1}(S)\cap\cD^{-1}(V\backslash S)}
		\phi_{\tau}\, .
	\end{align}
	Here $\mathcal{P}^{-1}(S) \triangleq \{ \tau \in \cT : \mathcal{P}(\tau)\cap S \neq \emptyset\}$ is the set of demand types for which nodes $S$ can serve as a pickup node; and $\mathcal{D}^{-1}(\cdot)$ is defined similarly.
	We assume that for any $1\leq t \leq T$, $(\bphi^t,\cP,\cD)$ is strongly connected, namely, that $\alpha(\bphi^t,\cP,\cD)>0$.
\end{cond}
Note that the strong connectivity of $(\bphi,\cP,\cD)$ is equivalent of requiring that with (stationary) demand arrival rates $\bphi$, for every ordered pair of nodes $(j,k)$, there is a sequence of demand types with positive arrival rates and corresponding pick-up and drop-off nodes that would take a supply unit from $j$ eventually to $k$.

We conclude this section with 
the observation that the main assumption of \citet*{banerjee2018SMW} is automatically violated in our setting.


\begin{rem}
\label{rem:CRP-is-violated-and-greedy-is-suboptimal}
The complete resource pooling (CRP) condition imposed in \citet*[][Assumption 3]{banerjee2018SMW} is automatically violated in the following subclass of our model. 
Consider our setup including Condition~\ref{cond:str_connect_jea}, where each demand type $\tau=(i,j)$ corresponds to an origin-destination pair, and that $\cP(i,j)=\{i\}$, $\cD(i,j)=\{j\}$. The CRP condition can be stated as follows: for each subset of nodes $S \subsetneq V, S \neq \emptyset$, the ``net demand'' $\mu_{S} \triangleq \sum_{i \in S} \sum_{j \in V \backslash S} \phi_{ij}$ is less than the ``net supply'' $\lambda_{S} \triangleq \sum_{j \in V \backslash S} \sum_{i \in S} \phi_{ji}$, i.e., $\mu_{S}< \lambda_{S}$.
Clearly, any demand arrival rates $\bphi$ violate CRP, since if $\mu_{S}< \lambda_{S}$ for some $S \subsetneq V, S \neq \emptyset$ then this means that $\mu_{V\backslash S} > \lambda_{V \backslash S}$ (given that $\mu_{V\backslash S} = \lambda_S$ and $\lambda_{V\backslash S} = \mu_S$ by definition), i.e., CRP is violated.
\end{rem}

\section{The MBP Policies and Main Result}
\label{sec:mbp}
In this section, we propose a family of blind online control policies, and state our main result for these policies, which provides a strong transient and steady state performance guarantee for finite systems.

\subsection{The Mirror Backpressure Policies}
We propose a family of online control policies which we call \emph{Mirror Backpressure} (MBP) policies.
Each member of the MBP family is specified by a mapping of normalized queue lengths (which we will define below) $\mathbf{f}(\nq): \Omega \to \mathbb{R}^m$, where $\mathbf{f}(\nq)\triangleq [f(\bar{q}_1),\cdots,f(\bar{q}_m)]^{\T}$ and $f$ is a monotone increasing function.\footnote{The methodology we will propose will seamlessly accommodate general mappings $\bof(\cdot)$ such that $\bof=\nabla F$ where $F(\cdot):\Omega\to\mathbb{R}$ is a strongly convex function, a special case of which is $\mathbf{f}(\nq)\triangleq [f_1(\bar{q}_1),\cdots,f_m(\bar{q}_m)]^{\T}$ for some  monotone increasing $(f_j)$s. Here it suffices to consider a single congestion function $f(\cdot)$, whereas in Section~\ref{subsec:extend-barrier} we will employ queue-specific congestion functions $f_j(\cdot)$.} %
We will refer to ${f}(\cdot)$ as the \textit{congestion function}, which maps each (normalized) queue length to a congestion cost at that node, based on which MBP will make its decisions.

In this section we will state our main result for the congestion function
\begin{align}
	\label{eq:inv_sqrt_mirror_map}
	f(\bar{q}_j) \triangleq -\sqrt{m}\cdot\bar{q}_j^{-\frac{1}{2}}\, , 
\end{align}
and postpone the results for other choices of congestion functions to Appendix \ref{append:proof-main-regret} (see also Remark~\ref{rem:general_mirror_map}).
%
We will later clarify the precise role of the congestion function and show that it is related to the mirror map in mirror descent \citep{beck2003mirror}.
Similar to the design of effective mirror descent algorithms, the choice of congestion function should depend on the constraints of the setting, leading to an interesting interplay between problem geometry and policy design. For instance, we employ a different congestion function for the setting in Section \ref{subsec:extend-barrier} where there are additional buffer capacity constraints.

For technical reasons, we need to keep $\nq$ in the \textit{interior} of the normalized state space $\Omega$, i.e., we need to ensure that all normalized queue lengths remain positive.
This is achieved by defining\yka{Is this really a \textbf{re}definition, or is it the first time that $\nq$ is defined.} the normalized queue lengths $\nq$ as
\begin{align}\label{eq:normalized_queue}
	\bar{q}_i\triangleq \frac{q_i + \delta_K}{\tilde{K}}
	\quad
	\textup{for}
	\quad
	\delta_K\triangleq \sqrt{K} \quad \textup{and} \quad \tK \triangleq K + m\delta_K\, .
\end{align}
\yka{Replaced $\delta^K$ with $\delta_K$.} Note that this definition leads to $\mathbf{1}^{\T}\nq = 1$ and therefore $\nq \in \Omega$.

Our proposed MBP policy for the joint entry-assignment control problem is given in Algorithm \ref{alg:mirror_bp}.
MBP serves a demand of type $\tau$ using a supply unit at $j^*$ and relocate it to $k^*$ if and only if
\begin{align}\label{eq:bp_score}
	(j^*,k^*)=\textup{argmax}_{j\in \cP(\tau),k\in\cD(\tau)}w_{j\tau k} + f(\bar{q}_j) - f(\bar{q}_k)\, ,
\end{align}
\emph{and} that $w_{j^*\tau k^*} + f(\bar{q}_{j^*}) - f(\bar{q}_{k^*})$ is nonnegative, \emph{and} the origin node $j^*$ has at least one supply unit (see Figure \ref{fig:reduced_cost} for illustration of the score in Section \ref{sec:intro}).
The score in \eqref{eq:bp_score} is nonnegative if and only if the payoff $w_{j\tau k}$ of serving the demand outweighs the difference of congestion costs (given by $f(\bar{q}_k)$ and $f(\bar{q}_j)$) between the dropoff node $k$ and the pickup node $j$.
Roughly speaking, MBP is more willing to take a supply unit from a long queue and add it to a short queue, than vice versa; {see Figures \ref{fig:reduced_cost} and \ref{fig:mirror_map} in Section \ref{sec:intro}}.
The policy is not only completely blind, but also semi-local, i.e., it only uses the queue lengths at the origin and destination.
%
Note that the congestion cost \eqref{eq:inv_sqrt_mirror_map} increases with queue length (as required), and furthermore decreases sharply as queue length approaches zero.
%
Observe that such a choice of congestion function makes MBP very reluctant to take supply units from short queues and helps to enforce the no-underflow constraint~\eqref{eq:no_underflow}. 

\smallskip
\begin{algorithm}[H]\label{alg:mirror_bp}
	\SetAlgoLined
	{At the start of period $t$, the system observes demand type $\tau[t]=\tau$.}
	{$(j^*,k^*)\leftarrow \textup{argmax}_{j\in\cP(\tau),k\in\cD(\tau)} w_{j\tau k} + f(\bar{q}_j[t]) - f(\bar{q}_k[t])$}
	
	\eIf{$ w_{j^*\tau k^*} + f(\bar{q}_{j^*}[t]) - f(\bar{q}_{k^*}[t])\geq 0$ \textup{\textbf{and}} $q_{j^*}[t] > 0$}{
		
		$x_{j^* \tau k^*}[t] \leftarrow 1$, i.e., serve the incoming demand using a supply unit from $j^*$ and relocate it to $k^*$ \;
	}{
		$x_{j^*\tau k^*}[t] \leftarrow 0$, 
		i.e., drop the incoming demand\;
	}
	
	The queue lengths update as $\nq[t+1] = \nq[t] - \frac{1}{\tK} 
	x_{j^*\tau k^*}[t](\boe_{j^*} - \boe_{k^*})$.
	\caption{Mirror Backpressure (MBP) Policy for Joint Entry-Assignment Control}
\end{algorithm}

\subsection{Performance Guarantee for MBP Policies}\label{sec:main-result}
We now formally state the main performance guarantee of our paper for the joint entry-assignment control model introduced in Section \ref{sec:model}.
We will outline the proof in Section \ref{sec:proof_sketch}, and extend the result to the dynamic pricing setting in Section \ref{sec:extension}.

\pqa{Might need to reformulate Theorem 1 and Remark 1 to accommodate more congestion functions.}
\begin{thm}\label{thm:main_regret}
	Consider a set of $m$ nodes and any sequence of demand arrival rates $\{\bphi^t\}_{t\leq T}$ that satisfy Condition \ref{cond:str_connect_jea} and vary $\eta$-slowly (Definition \ref{defn:variation-budget}).
	Define $\alpha_{\min}\triangleq\min_{1\leq t\leq T}\alpha(\bphi^t)>0$.
	Then there exists $K_1 = \textup{poly}\left(m,\frac{1}{\alpha_{\min}}\right)$, and a universal constant $C< \infty$, such that the following holds.\footnote{Here ``poly'' indicates a polynomial. The constant $C$ is universal in the sense that it does not depend on $K$, $m$ or $\alpha_{\min}$.}
    For the congestion function $f(\cdot)$ defined in \eqref{eq:inv_sqrt_mirror_map},
	for any  $K\geq K_1$, the following finite-horizon guarantee holds for Algorithm~\ref{alg:mirror_bp}
	%
	\begin{align*}
	L^{\textup{MBP}}_T\leq
	M_1 \left(\frac{K}{T} +\sqrt{\eta K} \right)
	+
	M_2	\frac{1}{K}\, ,
	 \qquad \textup{for} \ M_1 \triangleq Cm \ \textup{and} \ M_2 \triangleq Cm^2 \,.
	\end{align*}
\end{thm}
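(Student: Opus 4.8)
The plan is to exploit the connection, previewed in the paper, that MBP performs dual stochastic mirror descent on the static planning problem (SPP). I would write the SPP (Section~\ref{sec:static}) as the max-payoff circulation LP over $\bx\in[0,1]^{V\times V}$ with flow-balance at every node, let $g(\blambda)$ be its partial dual (dualizing flow balance, multiplier $\lambda_j$ at $j$), so that by LP strong duality $\min_{\blambda}g(\blambda)=\Wspp$, and reduce the theorem to lower-bounding $W^{\textup{MBP}}_T$ against $\Wspp$ via a separately established benchmark bound $W^*_T\le\Wspp+O(mK/T)$ — the latter holds because empirical flows over $T$ periods with $K$ circulating units are within $O(K/T)$ of balanced. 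Fix a comparison point $\nq^*\in\Omega$, an essentially optimal primal SPP solution which by Condition~\ref{cond:str_connect} may be taken in the interior of $\Omega$ with $\min_j\bar q^*_j$ bounded below by a function of $m,\alpha(\bphi)$ (this is the origin of $K_1=\textup{poly}(m,1/\alpha(\bphi))$); set $\blambda^*\triangleq-\bof(\nq^*)$. Now with $\blambda[t]\triangleq-\bof(\nq[t])$, a direct regrouping of the per-period payoff, the expected queue increment, and $g(\blambda[t])$ in terms of MBP's admission indicators gives the bookkeeping identity
\begin{align*}
\Ex\!\left[v^{\textup{MBP}}[t]\mid\nq[t]\right]\;=\;g(\blambda[t])\;-\;U[t]\;+\;\tK\,\big\langle\bof(\nq[t]),\,\Ex[\nq[t+1]-\nq[t]\mid\nq[t]]\big\rangle,
\end{align*}
where $U[t]\ge0$ is the ``no-underflow penalty'': the portion of the nonnegative MBP score contributed by demand that MBP would admit but cannot because its origin queue is empty. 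Absent $U[t]$ this reads ``$-\tK\,\Ex[\nq[t+1]-\nq[t]\mid\nq[t]]\in\partial g(\blambda[t])$,'' i.e., MBP is an exact stochastic mirror-descent step with step size $1/\tK$; $U[t]$ measures the deviation caused by the no-underflow constraint~\eqref{eq:no_underflow}, and is nonzero only when some queue with positive demand out-rate is empty, in which case $0\le U[t]\le1$.

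\textbf{Bregman drift.} Use the Lyapunov function $L(\nq)\triangleq F(\nq)-F(\nq^*)-\langle\bof(\nq^*),\nq-\nq^*\rangle\ge0$ — the Bregman divergence of $F(\nq)\triangleq\sum_j h(\bar q_j)$ (with $h'=f$, so $\nabla F=\bof$ and $F$ strongly convex on $\Omega$) between $\nq$ and $\nq^*$. Since one period changes only two coordinates of $\nq$, each by $1/\tK$, a second-order Taylor expansion yields $\Ex[L(\nq[t+1])-L(\nq[t])\mid\nq[t]]=\langle\bof(\nq[t])-\bof(\nq^*),\Ex[\nq[t+1]-\nq[t]\mid\nq[t]]\rangle+R_2[t]$ with $0\le R_2[t]\le\tfrac1{\tK^2}\max_j f'(\xi_j)$, where $\xi_j$ lies between $\bar q_j[t]$ and $\bar q_j[t]\pm1/\tK$; since MBP never serves from an empty queue, $\xi_j\ge\delta_K/\tK$ on that segment. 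Substituting the identity of the previous step and using $g(\blambda[t])\ge\Wspp$, the conditional drift is at most
\begin{align*}
\tfrac1{\tK}\!\left(\Ex[v^{\textup{MBP}}[t]\mid\nq[t]]-\Wspp\right)\;-\;\tfrac1{\tK}\!\left(g(\blambda[t])-\Wspp\right)\;+\;\tfrac1{\tK}U[t]\;-\;\big\langle\bof(\nq^*),\Ex[\nq[t+1]-\nq[t]\mid\nq[t]]\big\rangle\;+\;R_2[t].
\end{align*}

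\textbf{Telescoping and controlling the no-underflow cost.} Summing over $t=0,\dots,T-1$, dropping $\Ex[L(\nq[T])]\ge0$, collapsing $\sum_t\langle\bof(\nq^*),\cdot\rangle=\langle\bof(\nq^*),\Ex[\nq[T]]-\nq[0]\rangle$ (of magnitude $\le2\|\bof(\nq^*)\|_\infty=O(m)$ by interiority of $\nq^*$), and bounding $L(\nq[0])\le\sup_{\nq,\nq'\in\Omega}(F(\nq)-F(\nq'))+2\|\bof(\nq^*)\|_\infty=O(m)$, I would obtain after rearranging $\Wspp-W^{\textup{MBP}}_T\le M_1\tK/T+\tfrac1T\sum_t\Ex[\,U[t]-(g(\blambda[t])-\Wspp)+\tK R_2[t]\,]$ with $M_1=O(m)$. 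It then remains to show the averaged bracket is $O(m^2/K)$, which I would do by a state-dependent split: when every normalized queue is $\gtrsim1/m$, then $U[t]=0$, $R_2[t]=O(m^2/\tK^2)$ (as $f'(\bar q)=\tfrac{\sqrt m}2\bar q^{-3/2}=O(m^2)$ there), and the nonpositive term $-(g(\blambda[t])-\Wspp)$ is dropped, leaving $O(m^2/\tK)$; when some normalized queue is small, $\blambda[t]=-\bof(\nq[t])$ has a coordinate of order $\sqrt m\,(\delta_K/\tK)^{-1/2}$, far from the interior minimizer $\blambda^*$, so $g(\blambda[t])-\Wspp$ is large enough to dominate both $U[t]\le1$ and $\tK R_2[t]\le\sqrt m(\delta_K/\tK)^{-3/2}/\tK$, making the bracket $\le0$. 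Either way the average is $O(m^2/\tK)=O(m^2/K)$, which with the benchmark bound $W^*_T\le\Wspp+O(mK/T)$ yields the finite-horizon claim with $M_1=Cm$, $M_2=Cm^2$; taking $T\to\infty$ gives $L^{\textup{MBP}}\le M_2/K$.

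\textbf{Main obstacle.} The crux is the second half of the last step: turning the qualitative statement ``MBP aggressively protects near-empty queues'' into the quantitative fact that, at near-empty states, both the no-underflow penalty $U[t]$ and the worst-case Hessian blow-up in $R_2[t]$ are \emph{more than offset} by the dual suboptimality $g(\blambda[t])-\Wspp$ there. This is exactly where the bare Lyapunov-drift argument must be sharpened using the geometry of the SPP dual — the ``novel use of the continuum-limit problem'' flagged in Section~\ref{sec:proof_sketch} — and it is what pins down the congestion function~\eqref{eq:inv_sqrt_mirror_map} together with the choice $\delta_K=\sqrt K$: $f$ must be steep enough near $0$ that $g(\blambda[t])-\Wspp$ outgrows the blow-up of $R_2[t]$, yet flat enough away from $0$ that $R_2[t]$ — hence the $m^2/K$ term — is not degraded. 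The subsidiary ingredients (interiority of $\nq^*$ under Condition~\ref{cond:str_connect}, and the benchmark bound $W^*_T\le\Wspp+O(mK/T)$) are more routine but also needed.
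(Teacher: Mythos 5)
Your proposal is correct and follows essentially the same route as the paper: the fluid upper bound $W^*_T\le \Wspp+mK/T$ (Proposition~\ref{prop:fluid_ub}), the one-step identity decomposing the loss into a Lyapunov drift, a second-order term, the (nonpositive) dual suboptimality, and an underflow penalty (Lemma~\ref{lem:one_step_lyap}), telescoping, and then the state-dependent argument that near the boundary the dual suboptimality --- which grows like $\alpha(\bphi)\,[\max_j f(\bar q_j)-\min_j f(\bar q_j)-2m]^+$ by the complementary-slackness geometry of \eqref{eq:partial_dual} (Lemma~\ref{lem:dual_subopt}) --- dominates both $U[t]$ and the $f'$ blow-up (Lemma~\ref{lem:bound_rhs}); you also correctly isolate this last domination as the crux. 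The one structural difference is that you use the centered Bregman divergence $D_F(\nq,\nq^*)$ with an interior reference point $\nq^*$, which forces you to establish interiority of the dual optimum's mirror image and to control the extra telescoped term $\langle\bof(\nq^*),\Ex[\nq[T]]-\nq[0]\rangle$, whereas the paper deliberately telescopes the uncentered antiderivative $F$ itself and needs no reference point; both yield the same $O(m)\cdot K/T$ contribution, so this is a matter of bookkeeping rather than substance. (Minor quibble: the $\alpha(\bphi)$-dependence of $K_1$ does not come from the interiority of $\nq^*$ --- that bound is $\min_j\bar q^*_j\gtrsim 1/m$ independent of $\alpha$ --- but from the boundary-domination inequality you flag at the end.)
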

\begin{cor}\label{cor:stationary-main-result}
	When the demand arrivals are stationary ($\eta=0$), for any $K\geq K_1$, the following infinite-horizon guarantee holds for Algorithm~\ref{alg:mirror_bp}
	%
	\begin{align*}
		L^{\textup{MBP}} \leq
		M_2
		\frac{1}{K}\, , \qquad \textup{for} \ M_2 = Cm^2 \, .
	\end{align*}
\end{cor}
\begin{rem}\label{rem:general_mirror_map}
	In Section \ref{sec:extension} we obtain results similar to Theorem \ref{thm:main_regret} for the dynamic pricing setting (Theorem \ref{thm:main_regret_jpa}).
	In Appendix \ref{append:proof-main-regret} (Theorem~\ref{thm:general_mirror_map}), we generalize Theorem \ref{thm:main_regret} 
by showing similar performance guarantees for a whole class of congestion functions that satisfy certain growth conditions.
Informally, the congestion function needs to be steep enough near zero to protect the nodes from being drained of supply units.
	%
\end{rem}

There are several attractive features of the performance guarantee provided by Theorem~\ref{thm:main_regret} for the simple and practically appealing Mirror Backpressure policy:
\begin{compactenum}[(1),wide,labelwidth=!,labelindent=0pt]
	\item {\bf The policy is completely blind.}
	In practice, the platform operator at best has access to an imperfect estimate of the demand arrival rates $\{\bphi^t\}$, so it is a very attractive feature of the policy that it does not need any estimate of $\{\bphi^t\}$ whatsoever.
	It is worth noting that the consequent bound of $O\left(\frac{1}{K}\right)$ on the steady state optimality gap remarkably matches that provided by \citet*{banerjee2016pricing} even though MBP requires \emph{no} knowledge of $\bphi$, whereas the policy of  \citet*{banerjee2016pricing} requires \emph{exact} knowledge of $\bphi$. (However, our constant is quadratic in the number of nodes $m$, whereas the constant in the other paper is linear in  $m$.)
As shown in \citet*[][Proposition~4]{banerjee2018SMW}, if the estimate of demand arrival rates is imperfect, any state independent policy \citep*[such as that of][]{banerjee2016pricing} generically suffers a long run (steady state) per customer optimality gap of $\Omega(1)$ (as $K \rightarrow \infty$).
	\item {\bf Guarantee on transient performance.}
	In contrast with \citet*{banerjee2016pricing} which provides only a steady state bound for finite $K$, we are able to provide a performance guarantee for finite horizon and finite (large enough) $K$. The horizon-dependent term $K/T$ in our bound on optimality gap is small if the total number of arrivals $T$ is large compared to the number of supply units $K$.
	
	
	It is worth noting that our bound \emph{does not} deteriorate as the system size increases in the ``large market regime'', where the number of supply units $K$ increases proportionally to the demand arrival rates
\citep[this regime is natural in ride-hailing settings, taking the trip duration to be of order $1$ in physical time, and where a non-trivial fraction of cars are busy at any time, see, e.g.,][]{braverman2016empty}.
	Let $T^{\textup{real}}$ denote the horizon in physical time.
	As $K$ increases in the large market regime, the primitive $\bphi$ remains unchanged, while
	$T = \Theta(K\cdot T^{\textup{real}})$ since there are $\Theta(K)$ arrivals per unit of physical time, and $\zeta \triangleq \eta K$ is average rate of change of $\bphi$ with respect to physical time
	Hence, we can rewrite our performance guarantee as
	\begin{align*}
	W^*_T-W^{\textup{MBP}}_T
	\leq M\left(\frac{1}{T^{\textup{real}}} +
	\frac{1}{K} + \sqrt{\zeta} \right) \xrightarrow{K \to \infty} \frac{M}{T^{\textup{real}}} + \sqrt{\zeta} \, .
	\end{align*}

\item {\bf Guarantee for time-varying arrivals.} 
Our bound shows that MBP is near-optimal when the demand's average rate of change is small ($\eta K = o(1)$), and that the performance guarantee of MBP degrades gracefully as $\eta K$ increases.
We note that if the demand arrival rates remain stationary for blocks of time, 
e.g., the first half of the horizon has one stationary arrival rate matrix and the second half of the horizon has another stationary arrival rate matrix, then applying Corollary \ref{cor:stationary-main-result} 
to each contiguous block of time with stationary demand could yield a
better guarantee than directly applying Theorem \ref{thm:main_regret} to the entire horizon.

	\item {\bf Flexibility in the choice of congestion function.}
	Because of the richness of the class of congestion functions covered in Appendix \ref{append:proof-main-regret} which generalizes Theorem \ref{thm:main_regret}, the system controller now has the additional flexibility to choose a suitable congestion function $f(\cdot)$.
	%
	From a practical perspective, this flexibility can allow significant performance gains to be unlocked by making an appropriate choice of $f(\cdot)$, as evidenced by our numerical experiments in Section~\ref{subsec:numerics-ridehailing}.
	%
\end{compactenum}

\section{Proof of Theorem \ref{thm:main_regret}}
\label{sec:proof_sketch}
In this section we provide the key propositions and lemmas that lead to a proof of Theorem~\ref{thm:main_regret}.
Our analysis generalizes and refines the so-called \emph{Lyapunov drift method} in the network control literature \citep[see, e.g.,][]{neely2010stochastic}.

We first define a sequence of deterministic optimization problems which arise in the continuum limit: the \emph{static planning problem} ({SPP}) \citep[see, e.g.,][]{harrison2000brownian,dai2005maximum}, whose values we use to upper bound the optimal finite (and infinite) horizon per period $W^*_T$ (and $W^*$) defined in \eqref{eq:obj_finite} and \eqref{eq:obj_inf}.
The SPP  is a linear program (LP) defined for any demand arrival rates $\bphi$:
\begin{align}
\hspace{-0.2cm}\textup{SPP($\bphi$)}:\,\textup{maximize}_{\bx} \; &
\sum_{\tau\in \cT, j\in\mathcal{P}(\tau),k\in\mathcal{D}(\tau)}\ w_{j\tau k} \cdot \phi_{\tau} \cdot  x_{j\tau k}
\label{eq:fluid_obj}\\
\textup{s.t.}\; &
\sum_{\tau\in \cT, j\in\mathcal{P}(\tau),k\in\mathcal{D}(\tau)}\ \phi_{\tau}\cdot x_{j\tau k}(\boe_j - \boe_k) =\bzero
\hspace{2.8cm} \textup{(flow balance)} \label{eq:fluid_flow_bal}\\
&\sum_{j\in \mathcal{P}(\tau), k\in \mathcal{D}(\tau)}x_{j\tau k}\leq 1, \ x_{j\tau k} \geq 0\, ,  \, \forall j,k \in V,\ \tau\in\cT .
\hspace{0.1cm} \textup{(demand constraint)} \label{eq:fluid_dmd_constr}
\end{align}
One interprets $x_{j\tau k}$ as the fraction of type $\tau$ demand which is served by pickup location $j$ and dropoff location $k$, and the objective \eqref{eq:fluid_obj} as the rate at which payoff is generated under the fractions $\bx$.
In the SPP \eqref{eq:fluid_obj}-\eqref{eq:fluid_dmd_constr}, one maximizes the rate of payoff generation subject to the requirement that the average inflow of supply units to each node in $V$ must equal the outflow (constraint \eqref{eq:fluid_flow_bal}), and that $\bx$ are indeed fractions (constraint \eqref{eq:fluid_dmd_constr}).
%
Let $W^{\textup{SPP($\bphi$)}}$ be the optimal value of SPP($\bphi$).
The following proposition formalizes that, the optimal finite horizon per customer payoff $W^*_T$ cannot be much larger than $W^{\aspp}$ where $\bar{\bphi}\triangleq \frac{1}{T}\sum_{t=0}^{T-1}\bphi^t$.
%
\begin{prop}\label{prop:fluid_ub}
	For any horizon $T<\infty$, any $K$ and any starting state $\bq[0]$, the finite horizon and steady state average payoff $W^*_T$, $W^*$ are upper bounded as
	\begin{align}\label{eq:fluid_ub}
		W^*_T \leq W^{\aspp} + m \frac{K}{T}\, ,\quad
		W^* \leq W^{\aspp}\, .
	\end{align}
\end{prop}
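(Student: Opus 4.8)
The plan is to relax the $T$-period control problem to the static LP and exploit a flow decomposition of the time-averaged acceptance flow. Fix an arbitrary feasible policy $\pi\in\mathcal{U}$ and initial state $\bq\in\Omega_K$, and abbreviate $\mathbb{E}[\,\cdot\,]\triangleq\mathbb{E}[\,\cdot\mid\bq[0]=\bq]$. For each ordered pair $(j,k)$ set $N_{jk}\triangleq\sum_{t=0}^{T-1}\mathbb{E}[x_{jk}[t]]$, the expected number of served type-$(j,k)$ customers. Since $x_{jk}[t]\in\{0,1\}$, $x_{jk}[t]=0$ unless $(o[t],d[t])=(j,k)$, and the event $\{(o[t],d[t])=(j,k)\}$ has probability $\phi_{jk}$ independently of the history through period $t-1$, a routine conditioning argument gives $0\le N_{jk}\le T\phi_{jk}$. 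Moreover, because $v^{\pi}[t]=\sum_{j,k}w_{jk}x_{jk}[t]$ (at most one term is nonzero), the expected total payoff equals $\sum_{t=0}^{T-1}\mathbb{E}[v^{\pi}[t]]=\sum_{j,k}w_{jk}N_{jk}$. Summing the queue dynamics \eqref{eq:queue_dynamics} over $t$ and taking expectations gives the imbalance identity $\sum_{j,k}N_{jk}(-\boe_j+\boe_k)=\mathbb{E}[\bq[T]]-\bq=:\br$; since $\bq,\mathbb{E}[\bq[T]]\in\Omega_K$ we have $\mathbf{1}^{\T}\br=0$ and, using $r_j^+=(\mathbb{E}q_j[T]-q_j)^+\le \mathbb{E}q_j[T]$, also $\sum_j r_j^+=\sum_j r_j^-\le K$.

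Next I would apply the flow-decomposition theorem to the nonnegative flow $\mathbf{N}=(N_{jk})$ on the node set $V$: write $\mathbf{N}=\mathbf{N}^{\mathrm{circ}}+\sum_{P}f_P\,\chi_P$, where $\chi_P\in\{0,1\}^{V\times V}$ is the edge-indicator of a simple directed path $P$, the $P$'s run from nodes of positive excess $e_j\triangleq\sum_k N_{jk}-\sum_k N_{kj}$ to nodes of negative excess, $f_P\ge 0$ with $\sum_P f_P=\sum_j e_j^+=\sum_j r_j^-\le K$, and $\mathbf{N}^{\mathrm{circ}}$ is a circulation dominated coordinatewise by $\mathbf{N}$. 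Two observations then close the argument. First, since $0\le N^{\mathrm{circ}}_{jk}\le N_{jk}\le T\phi_{jk}$ and $\mathbf{N}^{\mathrm{circ}}$ is a circulation, setting $x_{jk}\triangleq N^{\mathrm{circ}}_{jk}/(T\phi_{jk})$ (and $x_{jk}\triangleq 0$ when $\phi_{jk}=0$, which forces $N^{\mathrm{circ}}_{jk}=0$) yields a feasible solution of the SPP \eqref{eq:fluid_obj}--\eqref{eq:fluid_dmd_constr}, so $\sum_{j,k}w_{jk}N^{\mathrm{circ}}_{jk}=T\sum_{j,k}\phi_{jk}w_{jk}x_{jk}\le T\,\Wspp$. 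Second, each simple path $P$ has at most $m-1$ edges and $\max_{j,k}|w_{jk}|=1$, so $\sum_{(j,k)\in P}w_{jk}\le m-1$, whence $\sum_P f_P\sum_{(j,k)\in P}w_{jk}\le(m-1)\sum_P f_P\le(m-1)K\le mK$. Adding the two bounds, $\sum_{t=0}^{T-1}\mathbb{E}[v^{\pi}[t]]=\sum_{j,k}w_{jk}N_{jk}\le T\,\Wspp+mK$; dividing by $T$, then taking the supremum over $\pi$ and the maximum over $\bq$, gives $W^*_T\le\Wspp+m\cdot\frac{K}{T}$. The infinite-horizon bound is then immediate from \eqref{eq:obj_inf}: $W^*=\limsup_{T\to\infty}W^*_T\le\limsup_{T\to\infty}\bigl(\Wspp+m\cdot\tfrac{K}{T}\bigr)=\Wspp$.

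I do not expect a genuine analytic obstacle here; the only points requiring care are routine bookkeeping: (i) the measurability/conditioning step establishing $N_{jk}\le T\phi_{jk}$ and the payoff identity, and (ii) invoking the flow-decomposition theorem in the precise form needed, namely that the circulation part is coordinatewise dominated by $\mathbf{N}$ and the total path-flow equals $\sum_j r_j^-\le K$. The one conceptual idea, which is the heart of the proof, is to split the time-averaged acceptance flow into a circulation component — which is exactly SPP-feasible and hence contributes at most $T\,\Wspp$ to the payoff — and a residual path-flow component whose total mass is bounded by the $O(K)$ net change the queues can undergo over the horizon, so that its payoff contribution is only $O(mK)$.
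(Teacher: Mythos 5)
Your proof is correct and follows essentially the same route as the paper's: both time-average the policy's acceptance decisions, split the resulting expected flow into an SPP-feasible circulation (contributing at most $T\,\Wspp$) plus an acyclic residual of total mass at most $K$ (coming from the at-most-$K$ net change in queue lengths), and bound the residual's payoff contribution by $(m-1)K\le mK$. The only cosmetic difference is in how the acyclic residual is bounded --- the paper uses a topological ordering and $m-1$ nested cuts each carrying at most $K/T$ normalized flow, whereas you use a simple-path decomposition (path length at most $m-1$ times total path mass at most $K$) --- and these yield the identical bound.
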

We obtain the finite horizon upper bound to $W^*_T$ in \eqref{eq:fluid_ub} by slightly relaxing the flow constraint \eqref{eq:fluid_flow_bal} to accommodate the fact that flow balance need not be exactly satisfied over a finite horizon. The proof is in Appendix \ref{appen:proof-of-fluid-ub}.

Our proposed MBP policy and its analysis is closely related to the (partial) dual of the SPP($\bphi^t$):
\begin{align}
	\textup{minimize}_{\by}\, g^t(\by),\quad
	&\textup{for }
	g^t(\by) \triangleq\;
	\sum_{\tau\in \cT}
	\phi_{\tau}^t \max_{j \in \cP(\tau), k \in \cD(\tau)}
	\left(
	w_{j\tau k} + y_j - y_k
	\right)^+\, .\label{eq:partial_dual}
\end{align}
where $(x)^+\triangleq\max\{0,x\}$.
Here $\by$ are the dual variables corresponding to the flow balance constraints \eqref{eq:fluid_flow_bal}, and have the interpretation of ``congestion costs'' \citep{neely2010stochastic}, i.e., $y_j$ can be thought of as the ``cost'' of having one extra supply unit at node $j$. In fact, as we describe in Appendix \ref{appen:MBP-as-MD}, MBP has the attractive property that it executes stochastic mirror descent \citep{beck2003mirror} on \eqref{eq:partial_dual} (where the inverse mirror map is the antiderivative of the congestion function $\mathbf{f}(\cdot)$).

Our main result, Theorem \ref{thm:main_regret}, is directly driven by the proposition below that connects the performance of MBP and the benchmark in the non-stationary environment.
\begin{prop}\label{prop:main_regret_DeltaT}
	Consider the setting in Theorem \ref{thm:main_regret}. Then there exists $K_1 = \textup{poly}\left(m,\frac{1}{\alpha_{\min}}\right)$, and a universal constant $C< \infty$,  such that the following holds.
	For the congestion function $f(\cdot)$ defined in \eqref{eq:inv_sqrt_mirror_map},
	for any  $K\geq K_1$, and any $0<\Delta_T<T$ the following guarantees hold for Algorithm~\ref{alg:mirror_bp}
	%
	\begin{align*}
		L^{\textup{MBP}}_T\leq
		M_1 \frac{K}{\Delta_T} + M_2\frac{1}{K}
		+
		\Delta_T m\eta
				\, , \qquad \textup{for} \ M_1 \triangleq Cm \ \textup{and} \ M_2 \triangleq Cm^2 \,.
	\end{align*}
\end{prop}
We illustrate the high-level structure of the  proof in Figure \ref{fig:gap_decomp}.
We introduce a ``batch size'' quantity $\Delta_T$;
note that $\Delta_T$ is only used for the purpose of analysis and is \emph{not} part of our algorithms.
The loss of MBP policy in $\Delta_T$ periods can be decomposed into three terms:

\begin{compactenum}[(1),leftmargin=*]
	\item The first term (from left to right in Figure \ref{fig:gap_decomp}), is $\frac{1}{T}\sum_{t=0}^{T-1}W^{\textup{SPP($\bphi^t$)}} - W_T^{\textup{MBP}}$. 
	We call it the \emph{policy gap}.
	We bound the policy gap using a Lyapunov analysis in Proposition \ref{prop:policy-gap} in Section \ref{subsec:policy-gap}. The antiderivative of the congestion function $\mathbf{f}(\cdot)$ serves as the Lyapunov function. Our choice of  $\mathbf{f}(\cdot)$ ensures that the policy gap is (provably) small despite the no-underflow constraints.
	\item The second term, $W^{\aspp} - \frac{1}{\Delta_T}\sum_{t=0}^{\Delta_T-1}W^{\textup{SPP($\bphi^t$)}}$ arises from the temporal variation of demand, therefore we call it the \emph{variation gap} (note that it is zero for stationary demand).
	We bound the variation gap in Proposition \ref{lem:SPPavg-vs-avgofSPPt} in Section \ref{sec:bound-variation-gap} using graph-theoretic analysis of the sensitivity of the SPP to $\bphi$.
	\item The third term, $W^*_T - W^{\aspp}$, has already been bounded in Proposition \ref{prop:fluid_ub}.
\end{compactenum}

Intuitively, the quantities in Figure \ref{fig:gap_decomp} highlights the following trade-off.
When $\Delta_T$ is large, the policy gap is small; when $\Delta_T$ is small, the variation gap is small.
Theorem \ref{thm:main_regret} follows by balancing this trade-off and setting $\Delta_T=\Theta(\min(\sqrt{K/\eta}, T))$, which we prove in Appendix \ref{append:proof-main-regret}.

\begin{figure}[!t]
	\centering
	\includegraphics[width=0.8\linewidth]{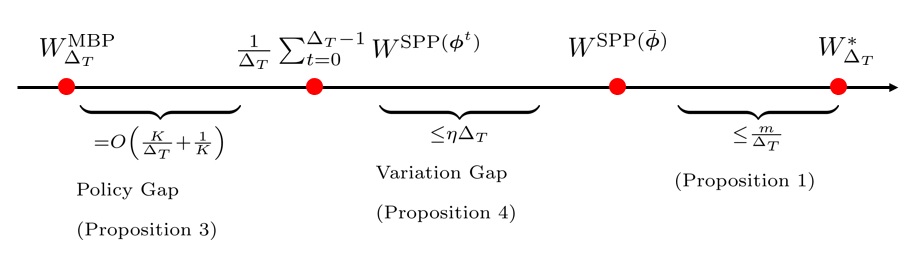}
	\caption{Roadmap of the proof of Proposition \ref{prop:main_regret_DeltaT}
	}\label{fig:gap_decomp}
\end{figure}

\subsection{Bounding the Policy Gap}\label{subsec:policy-gap}
In this section, we prove the following proposition that bounds the policy gap under MBP:
\begin{prop}\label{prop:policy-gap}
	Consider a set of $m$ nodes and any sequence of demand arrival rates $\{\bphi^t\}_{t\leq T}$ that satisfy Condition  \ref{cond:str_connect_jea}.
	\yka{Do you really need $\eta$-slowly varying for this prop. If not, please remove this requirement and moreover highlight in the text that this prop holds for \emph{arbitrary} sequences of demand arrival rates satisfying Condition 1.}
	Recall that $\alpha_{\min}= \min_{0\leq t\leq T}\alpha(\bphi^t)>0$.
	Then there exists $K_1 = \textup{poly}\left(m,\frac{1}{\alpha_{\min}}\right)$, and a universal constant $C< \infty$, such that the following holds.
	For the congestion function $f(\cdot)$ defined in \eqref{eq:inv_sqrt_mirror_map},
	for any $K\geq K_1$, the following guarantees hold for Algorithm~\ref{alg:mirror_bp}:
	\begin{align*}
		\frac{1}{T}\sum_{t=0}^{T-1}W^{\textup{SPP($\bphi^t$)}} - W_T^{\textup{MBP}}
		\leq
		M_1 \frac{K}{T}
		+
		M_2	\frac{1}{K}\, ,
		\qquad \textup{for} \ M_1 \triangleq Cm \ \textup{and} \ M_2 \triangleq Cm^2 \,.
	\end{align*}
\end{prop}

Notably, this proposition holds for \emph{arbitrary} sequences of demand arrival rates satisfying Condition 1.
The proof is based on Lyapunov analysis.
A Lyapunov function will allow to decompose the expected payoff from the next arrival into a combination of the objective, change in potential, and a per-period loss.
We use the antiderivative of $\bof(\cdot)$ as our Lyapunov function; for the congestion function $f$
in \eqref{eq:inv_sqrt_mirror_map}, this is
\begin{align}\label{eq:defn-lyap-func}
	F(\nq) \triangleq -2\sqrt{m}\sum_{j \in V}\sqrt{\bar{q}_j}\, .
\end{align}
The motivation for our choice of Lyapunov function come from the ``drift-plus-penalty'' framework in the network control literature {\citep[see, e.g., ][]{neely2006super,neely2010stochastic,gupta2020interior}. 
We generalize and refine the framework by adding a new term in the analysis, which allows us to bound the suboptimality contributed by underflow.
}
\yka{Deleted a para here. I suspect that, as in \cite{gupta2020interior}, the argument does not in fact leverage the connection with mirror descent. It's basically a Lyapunov drift argument right? If so, please say that and include some citations. And you can mention your work to bound the suboptimality contributed by underflow.}
Recall that $W^{\textup{SPP($\bphi^t$)}}$ is the optimal value of SPP \eqref{eq:fluid_obj}-\eqref{eq:fluid_dmd_constr} with demand arrival rate $\bphi^t$, $v^{\textup{MBP}}[t]$ denotes the payoff collected under the MBP policy in the $t$-th period, and $g^t(\cdot)$ is the dual problem \eqref{eq:partial_dual}.
We have the following key lemma (proved in Appendix~\ref{append:proof-one-step-lyapunov}):
\begin{restatable}[Suboptimality of MBP in one period]{lem}{onesteplyap}\label{lem:one_step_lyap}
	Consider congestion functions $f(\cdot)$s that are strictly increasing and continuously differentiable.
	We have the following decomposition (recall that $\tK = K + m\sqrt{K}$): 
	\begin{align}
	W^{\textup{SPP($\bphi^t$)}} -
	\mathbb{E}[v^{\textup{MBP}}[t]|\nq[t]] \nonumber
	\leq&\
	\underbrace{\tK\left(F (\nq[t]) - \mathbb{E}[F (\nq[t+1])|\nq[t]]\right)}_{\cV_1\textup{ change in potential}}
	+
	\underbrace{\frac{1}{2 \tK} \max_{j\in V} \max_{\bar{q}\in \left[\bar{q}_j[t]-\frac{1}{\tK},\bar{q}_j[t]+\frac{1}{\tK}\right]} \left|f'(\bar{q})\right|}_{\cV_2\textup{ loss due to stochasticity}}\\
	&\ +
	\underbrace{\left(W^{\textup{SPP($\bphi^t$)}}
		-g^t(\bof(\nq[t]))
		\right)}_{\cV_3\textup{ dual optimality gap}}
	+
	\underbrace{\mathds{1}\left\{q_{j}[t]=0,\exists j\in V\right\}}_{\cV_4\textup{ loss due to underflow}}\, . \label{eq:one_step_lyap}
	\end{align}
\end{restatable}
In Lemma \ref{lem:one_step_lyap}, the LHS of \eqref{eq:one_step_lyap} is the suboptimality incurred by MBP (benchmark against the value of SPP($\bphi^t$)) in a single period.
On the RHS of \eqref{eq:one_step_lyap}, $\cV_1$ and $\cV_2$ come from the standard Lyapunov drift argument \yka{Please eliminate mention of mirror descent here and instead refer to the Lyapunov drift argument if these terms are a standard component of that argument (I would imagine so).}; $\cV_3$ is the negative of the dual suboptimality at $\by = \bof(\nq[t])$, hence it is always non-positive; $\cV_4$ is the payoff loss because of underflow.

For the change in potential $\cV_1$, note that it forms a telescoping series when summing over many periods, and will therefore remain bounded.
Hence as we average over many periods, we have that $\cV_1$ tends to zero.

We now proceed to upper bound $\cV_2 + \cV_3 + \cV_4$ on the RHS of \eqref{eq:one_step_lyap}.
We outline our analysis in the following.
Observe that the terms $\cV_2$ and $\cV_4$ are non-negative, while $\cV_3$ is non-positive, thus the goal is to show that $\cV_3$ compensates for $\cV_2 + \cV_4$.
First notice that $\cV_2$ is large when there exist very short queues (because the congestion function \eqref{eq:inv_sqrt_mirror_map} changes rapidly only for short queue lengths), and $\cV_4$ is non-zero only when some queues are empty.
Helpfully, it turns out that $\cV_3$ is more negative in these same cases; we show this by exploiting the structure of the dual problem \eqref{eq:partial_dual}.
In Lemma \ref{lem:dual_subopt} we provide an upper bound for $\cV_3$ that becomes more negative as the shortest queue length decreases.
We prove Lemma \ref{lem:dual_subopt} in Appendix \ref{append:proof-dual-subopt} by utilizing complementary slackness for the SPP \eqref{eq:fluid_obj}-\eqref{eq:fluid_dmd_constr}.
\begin{restatable}{lem}{dualsubopt}\label{lem:dual_subopt}
	Consider congestion functions $f(\cdot)$s that are strictly increasing and continuously differentiable, and any $\bphi$ with connectedness $\alpha(\bphi)>0$.
	We have
	\begin{align*}
		\cV_3 \leq\
		-\alpha(\bphi)\cdot \left[\max_{j\in V} f(\bar{q}_j) - \min_{j\in V} f(\bar{q}_j) - 2 m\right]^+\, .
	\end{align*}
\end{restatable}


%

%

%
%

The following lemma bounds $\cV_2 + \cV_3 + \cV_4$. The proof is in Appendix~\ref{append:proof-bound-rhs}.
(In fact we prove a general version of the lemma which applies to all congestion functions that satisfy certain growth conditions formalized in Condition~\ref{cond:mirror_map} in Appendix~\ref{append:proof-main-theorems}. The growth conditions serve to ensure that $\cV_3$ compensates for $\cV_2 + \cV_4$.)
\begin{lem}\label{lem:bound_rhs}
	Consider the congestion function \eqref{eq:inv_sqrt_mirror_map}, and any $\bphi$ with connectedness $\alpha(\bphi)>0$.
	Then there exists $K_1 = \textup{poly}\left(m,\frac{1}{\alpha(\bphi)}\right)$ 
such that for $K\geq K_1$ and a universal constant $C>0$,
	\begin{align}\label{eq:bound_rhs}
		\cV_2 + \cV_3 + \cV_4 \leq  M_2\frac{1}{\tK}
		\qquad \textup{for} \ M_2 \triangleq Cm^2 \,.
	\end{align}
	Recall that $\tK = K + m\sqrt{K}$. 
\end{lem}

Putting Lemma \ref{lem:one_step_lyap} and Lemma \ref{lem:bound_rhs} together leads to the following proof of Proposition \ref{prop:policy-gap}.
The main
idea is to use the Lyapunov drift argument of \cite{neely2010stochastic}\yka{replace with proper citation}, namely, to sum the expectation of \eqref{eq:one_step_lyap} (the bound in Lemma~\ref{lem:one_step_lyap}) over the first $T$ time steps.
\begin{proof}[Proof of Proposition \ref{prop:policy-gap}]
	Plugging in Lemma \ref{lem:bound_rhs} into \eqref{eq:one_step_lyap} in Lemma~\ref{lem:one_step_lyap} and taking expectation, we obtain that there exists $K_1 = \textup{poly}\left(m,\frac{1}{\alpha_{\min}}\right)$ such that for any $0\leq t\leq T$,
	\begin{align}
		W^{\textup{SPP($\bphi^t$)}}-
		\mathbb{E}[v^{\textup{MBP}}[t]]
		\leq&\
		\tK\left(\mathbb{E}[F(\nq[t])] - \mathbb{E}[F (\nq[t+1])]\right)
		+
		M_2 \frac{1}{\tK}\, \qquad \textup{for } K \geq K_1 \, , \label{eq:one_step_lyap_1}
	\end{align}
	here $\tK = K + m\sqrt{K}$. Take the sum of both sides of the inequality \eqref{eq:one_step_lyap_1} from $t=0$ to $t=T-1$, and divide the sum by $T$.
	This yields
	\begin{align*}
		\frac{1}{T}\sum_{t=0}^{T-1}W^{\textup{SPP($\bphi^t$)}}-
		W^{\textup{MBP}}_T
		\leq&\
		\frac{\tK}{T}\left(\mathbb{E}[F(\nq[0])] - \mathbb{E}[F (\nq[T])]\right)
		+
		M_2 \frac{1}{\tK}\\
		\leq&\
		\frac{\tK}{T}\sup_{\nq_1,\nq_2\in\Omega}\left(F(\nq_1) - F(\nq_2)\right)
		+
		M_2 \frac{1}{\tK}
		\, \qquad \textup{for } K \geq K_1 \, .
	\end{align*}
	Let $M_1 \triangleq \sup_{\nq_1,\nq_2\in\Omega}\left(F(\nq_1) - F(\nq_2)\right) $.
	Observe that the function $F(\nq)$ given in \eqref{eq:defn-lyap-func} is negative $F(\nq)\leq 0$ for all $\nq\in\Omega$, and  is a convex function which achieves its minimum at $\nq = \frac{1}{m}\mathbf{1}$. Hence
	\begin{align*}
		M_1
		\leq
		0-\inf_{\nq\in\Omega}\ F(\nq)
		=
		-F\left(\frac{1}{m}\mathbf{1}\right) =2 m\, .
	\end{align*}
	Therefore the policy gap of MBP is upper bounded by $M_1\frac{\tK}{T} + M_2 \frac{1}{\tK}$ where $M_1 = C m$, $M_2 = C m^2$ and $C$ does not depend on $m$, $K$, or $\alpha_{\min}$. Moreover, $\tK = K + m \sqrt{K} \in [K, 2K]$ taking $K_1 \geq m^2$. 
	This concludes the proof.
\end{proof}

We conclude with some informal intuition as to why MBP with congestion function $\bof(\cdot)$ given in \eqref{eq:inv_sqrt_mirror_map} and normalized queue lengths $\nq$ defined in \eqref{eq:normalized_queue} ensures a small policy gap. MBP could run into two issues due to the no-underflow constraints: (i) The queue lengths corresponding to the optimal dual variables lie outside of the state space; (ii) The Lyapunov drift could be positive at certain ``boundary states'', i.e., states where some of the queues are empty.
%
For issue (i), note that while the range of normalized queue length $\bar{\bq}$ belongs to $[0,1]$, the range of $f(\nq)$ goes to $(-\infty,-\sqrt{m})$ as $K\to\infty$. As a result, for large enough $K$, there exists $\nq\in\Omega$ such that $\bof(\nq)$ corresponds to the optimal dual variables.\footnote{Note that optimal dual variables for \eqref{eq:partial_dual} is non-unique, since if $\by^*$ is optimal, $\by^* + \theta\mathbf{1}$ is also optimal for any $\theta\in\mathbb{R}$. Therefore we can always find an optimal dual variable that corresponds to $\bof(\nq)$ where $\nq\in\Omega$.}
For issue (ii), first note that this problem only occurs when there exists empty queues. 
At these states, the dual-suboptimality at $\bof(\nq)$ is large because $f(0)\to -\infty$ as $K\to\infty$, which creates a negative Lyapunov drift that ``pushes'' $\bof(\nq)$ towards the optimal dual variable. 
This corresponds to the intuition that MBP is aggressive in preserving supply units in near-empty queues. 
In contrast, we show in Appendix \ref{append:bp_fails} that regular backpressure (i.e., linear $\mathbf{f}(\cdot)$) may fail to address the two issues mentioned above, leading to a large policy gap.

\subsection{Bounding the Variation Gap}\label{sec:bound-variation-gap}
We have the following result that bounds the variation gap.
\begin{prop}\label{lem:SPPavg-vs-avgofSPPt}
	Suppose the demand arrival rates vary $\eta$-slowly (Definition~\ref{defn:variation-budget}) for some $\eta > 0$.
	Fix a horizon $T$. For any $0 \leq t \leq T-1$ we have
	\begin{align}
		\frac{1}{T}\sum_{t=1}^{T} W^{\textup{SPP}(\bphi_t)} \geq  W^{\textup{SPP}(\bar{\bphi)}} - m \eta T\, .
		\label{eq:SPPavg-vs-avgofSPPt}
	\end{align}
\end{prop}

The proof is based on sensitivity analysis of the linear program SPP($\bar{\bphi}$) via flow decomposition.
We prove Proposition \ref{lem:SPPavg-vs-avgofSPPt} in Appendix \ref{append:proof-main-theorems}.

\subsection{Proof of Proposition \ref{prop:main_regret_DeltaT}}
Using Proposition \ref{prop:policy-gap} and considering the first $\Delta_T$ periods, we obtain that
there exists $K_2 = \textup{poly}\left(m,\frac{1}{\alpha_{\min}}\right)$, and a universal constant $C< \infty$, such that the following holds.
For the congestion function $f(\cdot)$ defined in \eqref{eq:inv_sqrt_mirror_map}, for any $K\geq K_2$, the following guarantees hold for Algorithm~\ref{alg:mirror_bp}:
\begin{align}
	\frac{1}{\Delta_T}\sum_{t=0}^{\Delta_T-1}W^{\textup{SPP($\bphi^t$)}} - W_{\Delta_T}^{\textup{MBP}}
	\leq
	M_1 \frac{K}{\Delta_T}
	+
	M_2	\frac{1}{K}\, ,
	\qquad \textup{for} \ M_1 \triangleq Cm \ \textup{and} \ M_2 \triangleq Cm^2 \,.\label{eq:payoff-loss-jea-1}
\end{align}
%
Suppose the demand varies $\eta_1$-slowly on $[0,\Delta_T]$.
Using Proposition~\ref{prop:fluid_ub} and then Proposition~\ref{lem:SPPavg-vs-avgofSPPt}, we have
\begin{align}
	L^{\textup{MBP}}_{\Delta_T}
	=
	W^*_{\Delta_T}-
	W_{\Delta_T}^{\textup{MBP}}
	\leq&\
	\left(W^\aspp + m\frac{K}{\Delta_T}\right)  -  W^{\textup{MBP}}_{\Delta_T}  \nonumber\\
	\leq&\  \left( m\eta_1\Delta_T + \tfrac{1}{{\Delta_T}} \sum_{t=0}^{{\Delta_T}-1} W^{\textup{SPP($\bphi^t$)}}\right)  \ +   m\frac{K}{{\Delta_T}} -  W^{\textup{MBP}}_{\Delta_T} \nonumber\\
	\leq&\ \left( Cm\frac{K}{\Delta_T}  +
	M_2 \frac{1}{K}\right)  +  m\eta_1\Delta_T + m\frac{K}{\Delta_T}  \nonumber \\
	\leq&\ \frac{K}{\Delta_T}  m (C +1) +
	M_2 \frac{1}{K}  +   m\eta_1 \Delta_T\, . \label{eq:LT-bound}
\end{align}
where we used \eqref{eq:payoff-loss-jea-1} in the third inequality.
Suppose the demand varies $\eta_{\ell}$-slowly in batch $\ell$, and note that since the demand varies $\eta$-slowly over the whole horizon $[0,T]$, we must have $\eta=\frac{1}{\lfloor \frac{T}{\Delta_T} \rfloor}\sum_{\ell=1}^{\lfloor \frac{T}{\Delta_T} \rfloor}\eta_{\ell}$.
Now take average on both sides of \eqref{eq:LT-bound} over $\lfloor \frac{T}{\Delta_T} \rfloor$ batches, we obtain the result.

\section{Generalizations and Extensions}\label{sec:extension}
In this section, we
consider two general settings, namely, one with finite buffer queues and one allows joint pricing-assignment control (JPA).
We show that the extended models enjoy similar performance guarantees to that in Theorem \ref{thm:main_regret} under mild conditions on the model primitives.

\subsection{Congestion Functions for Finite Buffer Queue}\label{subsec:extend-barrier}

{Suppose the queues at a subset of nodes $\Vb \subset V$ have a finite buffer constraint.
For $j\in\Vb$, denote the buffer size by $d_{j} = \bar{d}_j K$ for some scaled buffer size $\bar{d}_j \in (0,1)$. (If $\bar{d}_j \geq 1$, the buffer size exceeds the number of supply units $d_j \geq K$ and there is no constraint as a result, i.e., $j \notin \Vb$.)
We will find it convenient to define $\bar{d}_j=1$ for each $j\in V \backslash \Vb$.
To avoid the infeasible case where the buffers are too small to accommodate all supply units, we assume that $\sum_{j\in V}\bar{d}_j>1$.}
%
The normalized state space will be
\begin{align*}
\Omega \triangleq
\left\{
\nq: \mathbf{1}^{\T} \nq = 1,\
\bzero \leq \nq \leq \bar{\bd}
\right\}\, ,\quad
\textup{where }
\bar{d}_j \triangleq d_j/K\, .
\end{align*}

Similar to the case of entry control, we need to keep $\nq$ in the interior of $\Omega$, which is achieved by defining the normalized queue lengths $\nq$ as
\begin{align}\label{eq:normalized-queue-general}
\bar{q}_j \triangleq \frac{q_j + \bar{d}_j\delta_K}{\tK}
\quad \textup{for}\quad
\delta_K = \sqrt{K}
\quad \textup{and}\quad
\tK \triangleq K + \left(\sum_{j\in V}\bar{d}_j\right)\delta_K\, .
\end{align}
%
One can verify that $\nq \in \Omega$ for any feasible state $\bq$. {When $\bar{d}_j=1$ for all $j\in V$, the definition of  $\bar{q}_j$ in \eqref{eq:normalized-queue-general} reduces to the one in \eqref{eq:normalized_queue}.}
The congestion functions $(f_j(\cdot))_{j\in V}$ are monotone increasing functions that map (normalized) queue lengths to congestion costs.
%
Here we will state our main results for the congestion function vector
\begin{align}
f_j(\bar{q}_j) \triangleq \left \{
\begin{array}{ll}
\sqrt{m} \cdot
\Cb\cdot\left(
\left(1 - \frac{\bar{q}_j}{\bar{d}_j}\right)^{-\frac{1}{2}}
- \left(\frac{\bar{q}_j}{\bar{d}_j}\right)^{-\frac{1}{2}}
- \Db\right)\, , &\forall j\in \Vb \, ,\\
 - \sqrt{m}\cdot{\bar{q}_j}^{-\frac{1}{2}}  &\forall j\in V\backslash \Vb\, .
\end{array}
\right .
\label{eq:inv_sqrt_mirror_map_gen}
\end{align}
Here $\Cb$ and $\Db$ are normalizing constants\footnote{\yka{Please update this footnote.}
Define $\epsilon\triangleq \frac{\delta_K}{\tK}$.
Let $\hb(\bar{q})\triangleq (1-\bar{q})^{-\frac{1}{2}} - {\bar{q}}^{-\frac{1}{2}}$ and
$h(\bar{q})\triangleq - {\bar{q}}^{-\frac{1}{2}}$.
Define $\Cb\triangleq \frac{h(\epsilon)-h(1/\sum_{j\in V}\bar{d}_j)}{\hb(\epsilon)-\hb(1/\sum_{j\in V}\bar{d}_j)}$ and $\Db \triangleq \hb(1/\sum_{j\in V}\bar{d}_j)-\Cb^{-1}h(1/\sum_{j\in V}\bar{d}_j)$.
%
%
In addition to the properties listed in the main text, we also have that $f_j(\bar{d}_j/\sum_{\ell\in V}\bar{d}_{\ell})$ has the same value for all $j\in V$.
These properties are useful in the following analysis.}
%
chosen to ensure that (i)~for all $j,k\in V$, we have that  $f_j(\bar{q}_j)=f_k(\bar{q}_k)$ when both queues are empty $q_j=q_k=0$; (ii)~for all $j,k\in \Vb$, we have that $f_j(\bar{q}_j)=f_k(\bar{q}_k)$ when both queues are full  $q_j=d_j$, $q_k=d_k$. (We state the results for other choices of congestion functions in Appendix \ref{append:proof-main-theorems}.)

Note that $f_j(\cdot)$ in \eqref{eq:inv_sqrt_mirror_map_gen} is identical to $f(\cdot)$ in \eqref{eq:inv_sqrt_mirror_map} for $j \notin V_b$, i.e., \eqref{eq:inv_sqrt_mirror_map_gen} is a generalization of \eqref{eq:inv_sqrt_mirror_map} to the case where some queues have buffer constraints.
 The intuitive reason \eqref{eq:inv_sqrt_mirror_map_gen} is a suitable congestion function is that it enables MBP to focus on queues which are currently either almost empty or almost full (the congestion function values for those queues take on their smallest and largest values, respectively), and use the control levers available to make the queue lengths for those queues trend strongly away from the boundary they are close to.

We have the following result for the finite-buffer setting. The proof is in Appendix \ref{append:proof-main-regret}.
\begin{thm}\label{thm:main-result-finite-buffer}
	Consider a set $V$ of $m \! \triangleq \! |V|>1$ nodes, a subset $\Vb \subseteq V$ of buffer-constrained nodes with scaled buffer sizes $\bar{d}_j \in (0,1) \ \forall j \in \Vb$ satisfying\footnote{Recall that we define $\bar{d}_j \triangleq 1$ for all $j \in V\backslash \Vb$.} $\sum_{j\in V}\bar{d}_j>1$.
	Consider any sequence of demand arrival rates $(\bphi^t)_{t\leq T}$ that satisfy Condition \ref{cond:str_connect_jea} and vary $\eta$-slowly (Definition \ref{defn:variation-budget}).
	Recall that $\alpha_{\min}=\min_{0\leq t\leq T}\alpha(\bphi^t)$. 
	Then there exists $K_1=\textup{poly}\left(m,\bar{\bd},\frac{1}{\alpha_{\min}}\right)$, and a universal constant $C<\infty$, such that the following holds.
	For the congestion function $\bof(\cdot)$ defined in \eqref{eq:inv_sqrt_mirror_map_gen}, for any $K\geq K_1$, the following guarantees hold for Algorithm \ref{alg:mirror_bp}
	\begin{align*}
		&L^{\textup{MBP}}_T\leq
		M_1 \left( \frac{K}{T} + \sqrt{\eta K} \right )
		+
		M_2
		\frac{1}{K}\, ,\\
\textup{for}\quad  &M_1=C m\, ,\quad M_2=C \frac{1}{\min_{j\in V}\bar{d}_j}\left(\frac{\sum_{j\in V}\bar{d}_j}{\min\{\sum_{j\in V}\bar{d}_j-1,1\}}\right)^{3/2}
\sqrt{m}\, .
\end{align*}
\end{thm}

\subsection{Joint Pricing-Assignment Setting}\label{subsec:jpa}
In this section, we consider the joint pricing-assignment (JPA) setting and design the corresponding MBP policy.
The platform's control problem is to set a price for each demand origin-destination pair, and decide an assignment at each period to maximize payoff. Our model here will be similar to that of \citet*{banerjee2016pricing}, except that the platform does \emph{not} know demand arrival rates, and we allow a finite horizon.
{The demand types $\tau$, pick-up neighborhood $\cP(\tau)$ and drop-off neighborhood $\cD(\tau)$ are defined in the same way as in Section \ref{sec:model}.} For simplicity, we assume that the demand type distribution $\bphi = (\phi_\tau)_{\tau \in \cT}$ is time invariant, and that all buffers have infinite capacity in this subsection.

The platform control and payoff in this setting are as follows.
At time $t$, after observing the demand type 
$\tau[t]=\tau$, the system chooses a \emph{price} $p_{\tau}[t]\in [p_{\tau}^{\min},p_{\tau}^{\max}]$ 
and a decision
\begin{align}\label{eq:jpa-assignment}
(x_{j\tau k}[t])_{j\in\cP(\tau),k\in\cD(\tau)}\in \{0,1\}^{|\cP(\tau)|\cdot |\cD(\tau)|}\quad
\textup{such that}
\quad
\sum_{j\in\cP(\tau),k\in\cD(\tau)}x_{j\tau k}[t]\leq 1\, .
\end{align}
As before we require
$
	x_{j\tau k}[t] = 0
$
if $q_{j}[t]=0$.

The result of the platform control is as follows:
%
\begin{compactenum}[(1),wide, labelwidth=!, labelindent=0pt]
	\item Upon seeing the price, the arriving demand unit will decline (to buy) with probability $F_{\tau}(p_{\tau}[t])$, where $F_{\tau}(\cdot)$ is the cumulative distribution function of type $\tau$ demand's willingness-to-pay.
	\item If the demand accepts (i.e., buys), 
then a supply unit relocates based on $x_{j\tau k}[t]$.
	Meanwhile, the platform collects payoff $v[t] =  p_{\tau}[t] - c_{j\tau k}$ where $c_{j\tau k}$ is the ``cost'' of serving a demand unit of type $\tau$ using pick-up node $j$ and drop-off node $k$.
	%
	If the demand unit declines, 
the supply units do not move and $v[t]=0$.
\end{compactenum}
%

We assume the following regularity conditions to hold for demand functions $\big (F_{\tau}(p_{\tau})\big )_{\tau}$. These assumptions are quite standard in the revenue management literature, \citep[see, e.g.,][]{gallego1994optimal}. 
%
\begin{cond}\label{cond:pricing_elas}
	\begin{compactenum}[(1),wide, labelwidth=!,labelindent=0pt]
		\item Assume\footnote{The assumption $F_{\tau}(p_{\tau}^{\min})=0$ is without loss of generality, since if a fraction of demand is unwilling to pay $p^{\min}_{\tau}$, that demand can be excluded from $\bphi$ itself.} $F_{\tau}(p_{\tau}^{\min})=0$  and that $F_{\tau}(p_{\tau}^{\max})=1$.
		%
		\item Each demand type's willingness-to-pay is non-atomic with support $[p_{\tau}^{\min},p_{\tau}^{\max}]$ and positive density everywhere on the support;  hence $F_{\tau}(p_{\tau})$ is differentiable and strictly increasing  on $(p_{\tau}^{\min},p_{\tau}^{\max})$. (If the support is a subinterval of
$[p_{\tau}^{\min},p_{\tau}^{\max}]$, we redefine $p_{\tau}^{\min}$ and $p_{\tau}^{\max}$ to be the boundaries of this subinterval.)
		%
		
		%
		%
		\item The revenue functions
		$
		r_{\tau}(\mu_{\tau})\triangleq \mu_{\tau}\cdot p_{\tau}(\mu_{\tau})
		$
		are concave and twice continuously differentiable, where $\mu_{\tau}$ denotes the fraction of demand of type $\tau$ which is realized (i.e., willing to pay the price offered).
	\end{compactenum}
\end{cond}
As a consequence of Condition~\ref{cond:pricing_elas} parts 1 and 2, the willingness to pay distribution $F_{\tau}(\cdot)$ has an inverse denoted as $p_{\tau}(\mu_{\tau}):[0,1] \rightarrow [p_{\tau}^{\min},p_{\tau}^{\max}]$ which gives the price which will cause any desired fraction $\mu_{\tau} \in [0,1]$ of demand to be realized. (The concavity assumption in part 3 of the condition is stated in terms of this function $p_\tau(\cdot)$.) 
Without loss of generality, let $\max_{\tau\in \cT}p_{\tau}^{\max} + \max_{j,k\in V,\tau\in\cT}|c_{j\tau k}| = 1$.

In the JPA setting, the net demand $\phi_{\tau} \mu_{\tau}$ plays a role in myopic revenues but also affects the distribution of supply, and the chosen prices need to balance myopic revenues with maintaining a good spatial distribution of supply.
Intuitively, when sufficiently flexible pricing is available as a control lever, the system should modulate the quantity of demand 
through changing the prices (and serving all the demand which is then realized) rather than apply entry control (i.e., dropping some demand proactively). 
Our MBP policy for this setting will have this feature.

The dual problem to the SPP in the JPA setting is (see Appendix \ref{appen:jpa} for the statement of SPP and the derivation of its dual):
\begin{align}
\textup{minimize}_{\by}\, g_{\textup{JPA}}(\by) \quad
\textup{for }
g_{\textup{JPA}}(\by) \triangleq \;
\sum_{\tau\in \cT}
\phi_{\tau}\max_{\{0\leq\mu_{\tau}\leq 1\}}
\left(
r_{\tau}\left(\mu_{\tau}\right)
+
\mu_{\tau}\max_{j\in\cP(\tau),k\in\cD(\tau)}	
\left(-c_{j\tau k} + y_j - y_k\right)
\right)\, .\label{eq:partial_dual_jpa}
\end{align}


Once again, the MBP policy (Algorithm \ref{alg:mirror_bp_jpa} below) is defined to achieve the argmaxes in the definition of the dual objective $\gJPA(\cdot)$ with the $y$s replaced by congestion costs:
MBP dynamically sets prices $p_\tau$ such that mean fraction of demand realized under the policy is the outer argmax in the definition \eqref{eq:partial_dual_jpa} of $\gJPA(\cdot)$,
and the assignment decision of MBP achieves the inner argmax in the definition \eqref{eq:partial_dual_jpa} of $\gJPA(\cdot)$. The policy again has the property that it executes stochastic mirror descent on the dual objective $\gJPA(\cdot)$.
The optimization problem for computing $\mu_{\tau}[t]$ is a one-dimensional concave maximization problem (Condition~\ref{cond:pricing_elas} part 3), hence $\mu_{\tau}[t]$ can be efficiently computed.

The MBP policy retains the advantage that it does not require any prior knowledge of gross demand $\bphi$.  
We assume that the willingness-to-pay distributions $F_{\tau}(\cdot)$s are exactly known to the platform; it may be possible to relax this assumption via a modified policy which ``learns'' the  $F_{\tau}(\cdot)$s, however, pursuing this direction is beyond the scope of the present paper.

\begin{algorithm}[H]\label{alg:mirror_bp_jpa}
	\SetAlgoLined
	{At the start of period $t$, the system observes $\tau[t]=\tau$.}
	
	$(j^*,k^*) \leftarrow \arg \max_{j\in\cP (\tau),k\in\cD(\tau)}\left\{ -c_{j\tau k} + f_j(\bar{q}_j[t]) - f_k(\bar{q}_k[t])  \right\}$\;
	
	\eIf{$q_{j^*}[t] > 0$}{
		
        $\mu_{\tau}[t] \leftarrow \textup{argmax}_{\mu_{\tau} \in [0,1]}\left\{
		r_{\tau}(\mu_{\tau})+
		\mu_{\tau}\cdot
		(-c_{j^* \tau k^*} + f_{j^*}(\bar{q}_{j^*}[t]) - f_{k^*}(\bar{q}_{k^*}[t]))
		\right\}$\;
        $p_{\tau}[t] \leftarrow F^{-1}_{\tau}(\mu_{\tau}[t])$\;
		$x_{j^* \tau k^*}[t] \leftarrow 1$, i.e., if the incoming demand stays, serve it by pick up from $j^*$ and drop off at $k^*$, otherwise do nothing\;
	}{
		$x_{j^* \tau k^*}[t] \leftarrow 0$, i.e., drop the incoming demand\;
	}
	
	The queue lengths update as $\nq[t+1] = \nq[t] - \frac{1}{\tK}
x_{j^* \tau k^*}[t](\boe_{j^*} - \boe_{k^*})$.
	\caption{Mirror Backpressure (MBP) Policy for Joint Pricing-Assignment}
\end{algorithm}

\medskip

We have the following performance guarantee for Algorithm~\ref{alg:mirror_bp_jpa}, analogous to Theorem \ref{thm:main_regret}.
\begin{thm}\label{thm:main_regret_jpa}
Fix a set $V$ of $m=|V|> 1$ nodes, 
minimum and maximum allowed prices $(p_{\tau}^{\min}, p_{\tau}^{\max})_{\tau\in\cT}$,
 any $(\bphi,\cP,\cD)$ that satisfy Condition~\ref{cond:str_connect_jea} (strong connectivity), and willingness-to-pay distributions $(F_{\tau})_{\tau\in\cT}$ that satisfy Condition~\ref{cond:pricing_elas}.  
Then there exist $K_1 < \infty$, $M_1=Cm$, and $M_2=Cm^2$ for universal constant $C>0$ such that
for the congestion function $f(\cdot)$ defined in \eqref{eq:inv_sqrt_mirror_map}, the following guarantee holds for Algorithm \ref{alg:mirror_bp_jpa}. For any horizon $T$ and for any $K \geq K_1$, we have
	\begin{align*}
	L^{\textup{MBP}}_T\leq
M_1 \frac{K}{T}
+
M_2
\frac{1}{K}\, ,
\qquad\textup{and}\qquad
L^{\textup{MBP}} \leq
M_2
\frac{1}{K} \, .
	\end{align*}
\end{thm}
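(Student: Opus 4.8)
The plan is to mirror the proof of Theorem~\ref{thm:main_regret} essentially verbatim, replacing the entry-control machinery with the joint pricing-assignment analogues, and leaning on the fact that Algorithm~\ref{alg:mirror_bp_jpa} executes stochastic mirror descent on the dual $\gJPA(\cdot)$ of the JPA static planning problem. Concretely, I would first establish a single-period Lyapunov decomposition analogous to Lemma~\ref{lem:one_step_lyap}: with the Lyapunov function $F(\nq)$ equal to the antiderivative of the congestion function vector \eqref{eq:inv_sqrt_mirror_map_gen} (now with finite-buffer barrier terms), show
\begin{align*}
\Wspp - \mathbb{E}[v^{\textup{MBP}}[t]\mid \nq[t]]
\le
\tK\bigl(F(\nq[t]) - \mathbb{E}[F(\nq[t+1])\mid \nq[t]]\bigr)
+ \cV_2 + \cV_3 + \cV_4\, ,
\end{align*}
where $\cV_2$ again collects the $\tfrac{1}{2\tK}\max_j|f_j'(\bar q_j[t])|$ discretization term, $\cV_3 = \Wspp - \gJPA(\bof(\nq[t])) \le 0$ is the (nonpositive) dual suboptimality, and $\cV_4$ bounds the payoff lost on the event that a chosen pickup queue is empty or a chosen dropoff queue is full. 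The only new wrinkle relative to the entry-control proof is that the per-period payoff is now $p_\tau[t]-c_{j\tau k}$ rather than $w_{jk}$; but since MBP chooses $\mu_\tau[t]$ and $(j^*,k^*)$ to achieve the argmaxes defining $\gJPA$, the revenue-minus-congestion-cost term telescopes into exactly the mirror-descent inner product, so the decomposition goes through with $r_\tau(\cdot)$ playing the role previously played by the linear payoff, using concavity of $r_\tau$ (Condition~\ref{cond:pricing_elas} part 3) in place of linearity.

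Next I would bound $\cV_2+\cV_3+\cV_4$. This requires the JPA analogue of Lemma~\ref{lem:dual_subopt}: using complementary slackness for the JPA static planning problem together with strong connectivity (Condition~\ref{cond:str_connect_jea}), one shows $\cV_3 \le -\alpha(\bphi,\cP,\cD)\cdot[\max_j f_j(\bar q_j) - \min_j f_j(\bar q_j) - 2m]^+$, so that $\cV_3$ becomes very negative precisely when some queue is near-empty or (for $j\in\Vb$) near-full. Because the congestion functions \eqref{eq:inv_sqrt_mirror_map_gen} are calibrated so that $f_j$ agrees across nodes at empty queues and across $\Vb$-nodes at full queues, and blow up like $(\bar q_j/\bar d_j)^{-1/2}$ and $(1-\bar q_j/\bar d_j)^{-1/2}$ near the two boundaries, the same growth-condition argument as in Lemma~\ref{lem:bound_rhs} (the general version in Appendix~\ref{append:proof-main-theorems}) shows that for $K \ge K_1$ the negative $\cV_3$ dominates the positive $\cV_2+\cV_4$, leaving $\cV_2+\cV_3+\cV_4 \le M_2/\tK$ with $M_2 = \textup{poly}(m,\bar\bd)$. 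The dependence of $K_1$ and $M_2$ on $\bar\bd$ (rather than being universal) arises through the normalizing constants $\Cb,\Db$ and the factor $\min_j \bar d_j$ in the growth estimate — this is why the constants here are merely $\textup{poly}(m,\bar\bd)$ rather than the sharp constants of Theorem~\ref{thm:main_regret_jea}.

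Finally I would run the Lyapunov-drift argument of Section~\ref{subsec:lyap_drift}: sum the single-period bound over $t=0,\dots,T-1$, telescope the $\tK(F(\nq[t])-F(\nq[t+1]))$ terms, divide by $T$, and use $\sup_{\nq_1,\nq_2\in\Omega}(F(\nq_1)-F(\nq_2)) = \textup{poly}(m,\bar\bd) < \infty$ (finite because the barrier keeps $\nq$ bounded away from $\partial\Omega$ and $F$ is continuous on $\Omega$) to get $\Wspp - W^{\textup{MBP}}_T \le M_1 K/T + M_2/K$ after noting $\tK \in [K,2K]$. Combining with the JPA version of Proposition~\ref{prop:fluid_ub} — i.e., $W^*_T \le \Wspp + mK/T$ and $W^* \le \Wspp$, proved by the same finite-horizon relaxation of the flow-balance constraint — yields both claimed bounds, the steady-state one by sending $T\to\infty$. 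The main obstacle I anticipate is verifying the JPA analogue of Lemma~\ref{lem:dual_subopt}: the dual objective $\gJPA$ now has a nested structure (an outer concave maximization over $\mu_\tau$ wrapped around an inner linear maximization over $(j,k)$), so extracting the complementary-slackness relations that certify $\cV_3$ is sufficiently negative requires carefully identifying which primal constraints bind at the SPP optimum and tracking how the revenue-function curvature interacts with the connectivity cut bound; everything else is a faithful transcription of the entry-control and JEA arguments.
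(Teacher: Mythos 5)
Your proposal is correct and follows essentially the same route as the paper, whose own proof of Theorem~\ref{thm:main_regret_jpa} is only a sketch asserting that it is a direct extension of Theorem~\ref{thm:main_regret} via the JPA single-period decomposition (Lemma~\ref{lem:opt_gap_decomp_jpa}), the JPA dual-suboptimality bound (Lemma~\ref{lem:local_polyhedral_jpa}), an omitted JPA copy of Lemma~\ref{lem:bound_rhs_jea}, the upper bound of Proposition~\ref{prop:fluid_upper_bound_jpa} (which does need two extra applications of Jensen's inequality and subadditivity of the concave $r_\tau$), and the same telescoping drift argument. The one obstacle you flag --- the nested structure of $\gJPA$ --- is resolved in the paper not by chasing binding primal constraints but by the direct observation that once $y_j - y_k \geq 2$ the inner maximizer $\mu_\tau$ saturates at $1$ for types with pickup near $j$ and dropoff near $k$ and at $0$ for the reverse types, after which the node-ordering/shifting argument of Lemma~\ref{lem:dual_subopt_jea} applies verbatim together with strong duality.
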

\pqa{Again, check the factor $m^2$ and $\rho$.}
We outline the proof of Theorem \ref{thm:main_regret_jpa} in Appendix \ref{appen:jpa}.

\section{Application to Shared Transportation Systems}\label{sec:shared-transportation}

\yka{Updated this section.}
Our setting can be mapped to shared transportation systems such as bike sharing and ride-hailing systems. In this context,  the nodes in our model correspond to geographical locations, while supply units and demand units correspond to vehicles and customers, respectively.

\emph{Dynamic incentive program for bike sharing systems.} 
\citet{chung2018bike} explain that Citi Bike's Bike Angel incentive program works as follows: there are two types of bike stations {at any time}, the incentivized ones and neutral ones; depending on
the origin and the destination stations of a trip, different amounts of points are awarded to the rider. The points have monetary values. The system objective is to minimize out-of-stock and out-of-bike events.
Therefore, to view it as an application of our JPA model, we can view the amount of points awarded for a certain trip as (the negative of) price of this trip; the customers have a demand function denoting their response to reward points (i.e., negative of prices);
and the value the platform derives from a ride equals customer utility and/or revenue generated (which is a constant) minus the cash value of points awarded.
By using a JPA-based MBP policy, the platform can dynamically set the amount of reward points for each origin-destination pair.
In docked bike sharing systems, there is a constraint on the number of docks available at each location. Such constraints are seamlessly handled in our framework as detailed earlier in Section~\ref{subsec:extend-barrier}. 
One concern may be that our model ignores travel delays. However, in most bike sharing systems, the fraction of bikes in transit at any time is typically quite small (under 10-20\%).\footnote{The report {https://nacto.org/bike-share-statistics-2017/} tells us that U.S. dock-based systems produced an average of 1.7 rides/bike/day, while dockless bike share systems nationally had an average of about 0.3 rides/bike/day. Average trip duration was 12 minutes for pass holders (subscribers) and 28 mins for casual users. In other words, for most systems, each bike was used less than 1 hour per day, which implies that less than 10\% of bikes are in use at any given time during day hours (in fact the utilization is below 20\% even during rush hours).} As a result, we expect our control insights to retain their power despite the presence of delays. (Indeed, we will numerically demonstrate in Section~\ref{subsec:numerics-ridehailing} that this is the case in the ridehailing setting; see the excess supply case where MBP performs well even when the vast majority of supply is in transit at any time.) We leave a detailed study of bike sharing platforms to future work.

\emph{Online control of ride-hailing platforms.} Ride-hailing platforms make dynamic decisions to optimize their objectives (e.g., revenue, welfare, etc.).
	%
	%
For most ride-hailing platforms in North America, pricing is used to modulate demand.
	%
In certain countries such as China, however, pricing is a less acceptable lever, hence admission control of customers is used as a control lever instead.
	%
In both cases, the platform further decides where (near the demand's origin) to dispatch a car from, and where (near the demand's destination) to drop off a customer.
These scenarios are captured, respectively, by the joint entry-assignment (JEA) model\footnote{The JEA setting can be mapped to ride-hailing as follows: there is a demand type $\tau$ corresponding to each (origin, destination) pair $(j,k) = V^2$, with $\cP(\tau)$ being nodes close to the origin $j$ and $\cD(\tau)$ being nodes close to the destination $k$.} studied in Section \ref{sec:model} and joint pricing-assignment (JPA) model studied in Section \ref{sec:extension}. Again, a concern may be that travel delays play a significant role in ride-hailing, whereas delays are ignored in our theory. In the following subsection, we summarize a numerical investigation of ride-hailing focusing on entry and assignment controls only (a full description is provided in Appendix~\ref{appen:subsec:numerics-ridehailing}). We find that MBP performs well despite the presence of travel delays. In order to address the case where the available supply is scarce, we heuristically adapt MBP to incorporate the Little's law constraint (Section~\ref{subsubsec:supply-aware-MBP}). 
	%
	%
	%
	

\subsection{Numerical investigation of the application to {ride-hailing}}
\label{subsec:numerics-ridehailing}
{In Section \ref{subsubsec:supply-aware-MBP}, we examine the performance of MBP policy when there are travel delays using numerical experiments.}
The simulation environment we study is inspired by ride-hailing, and leverages demand estimates deduced from NYC yellow cab data \citep{buchholz2021spatial} and travel times from Google Maps. 
In Section \ref{subsec:sims-large-network-main-paper}, we provide the summary of simulations that study the performance of MBP policy in large networks.
In the interest of space, we provide only the key findings of our simulations here and defer a full description of the simulation environment and various technical details 
to Appendix~\ref{appen:subsec:numerics-ridehailing}.

\subsubsection{Travel Delays and the Supply-Aware MBP Policy}
\label{subsubsec:supply-aware-MBP}
In the following, we investigate the performance of MBP policy when there are travel delays.
Similar to our main setting\footnote{The correspondence between our (ride-hailing) simulation setting and the JEA setting is as follows: In the ride-hailing setting, the type of a demand is its origin-destination pair, i.e. $\cT=V\times V$. For type $(j,k)$ demand, its supply neighborhood is the neighboring locations of $j$, which we denote by (with a slight abuse of notation) $\cP(j)$. We do not consider flexible drop-off, therefore $\cD(j,k)=\{k\}$.}$^{,}$\footnote{In our simulations, we focus on the special case where demand is stationary instead of time-varying, even though MBP policies are expected to work well if demand varies slowly over time. We make this choice because it allows us to compare performance against that of the policy proposed in \cite{banerjee2016pricing} for the stationary demand setting.} in Section~\ref{sec:model}, we allow the platform two control levers: entry control and assignment/dispatch control.
Our theoretical model made the simplifying assumption that pickup and service of demand are \textit{instantaneous}.
We relax this assumption in our numerical experiments by adding realistic travel times. We retain our simplifying assumption that drivers do not relocate in the absence of a passenger.
We consider the following two cases:
\begin{compactenum}[(1),wide, labelwidth=!, labelindent=0pt]
	\item \textit{Excess supply. }The number of cars in the system is slightly (5\%) above the ``fluid requirement'' (see Appendix~\ref{subsec:sims_setup} for details on the ``fluid requirement'') to achieve the value 
of the static planning problem.
	%
	\item \textit{Scarce supply. }The number of cars fall short (by 25\%) of the ``fluid requirement'', i.e., there are not enough cars to realize the optimal solution of static planning problem \eqref{eq:fluid_obj}-\eqref{eq:fluid_dmd_constr} under instantaneous relocation (even if we ignore stochasticity).
\end{compactenum}
We compare our MBP policy to three state-of-the-art policies in literature: the fluid-based policy in \cite{banerjee2016pricing}, the Utility-Delay Optimal Algorithm (UDOA) in \cite{neely2006super}, and the Deficit MaxWeight (DMW) policy in \cite{jiang2009stable}. We note that the UDOA policy is in fact a member of the  MBP family of policies, with exponential congestion function $f(q)=  \omega\cdot (e^{\omega (q-q_0)} - e^{\omega (q_0-q)})$ for suitable $\omega,q_0>0$. 
See Appendix \ref{appen:subsec:numerics-ridehailing} for a detailed description of these benchmark policies. 
\smallskip

{\bf Summary of findings.}
We make a natural modification of the MBP policy (with congestion function \eqref{eq:inv_sqrt_mirror_map}) to account for finite travel times; specifically, we employ a \emph{supply-aware MBP} policy which estimates and uses a shadow price of keeping a vehicle (supply unit) occupied for one unit of time.\footnote{To make the comparison fair, we modify the UDOA and DMW policies using the same heuristic approach, as the original UDOA and DMW policies do not take into account the travel delays.} This policy is described at the end of this section.

\emph{The excess supply case.} We simulate the (stationary) system from 8 a.m. to 12 p.m. with $100$ randomly generated initial states.\footnote{We first uniformly sample $100$ points from the simplex $\{\bq:\sum_{i \in V}q_i = K\}$, which are used as the system's initial states at 6 a.m. (note that all the cars are free).
	Then we ``warm-up'' the system by employing the static policy from 6 a.m. to 8 a.m., assuming the demand arrival process during this period to be stationary (with the average demand arrival rate during this period as mean).
	Finally, we use the system's states at 8 a.m. as the initial states.}
The simulation results on performance are shown in Figure \ref{fig:payoff_excess}.
The results show that MBP policy significantly outperforms both the DMW policy and the fluid-based policy, and consistently outperforms the UDOA policy: the average payoff under MBP over 4 hours is  about $105\%$ of $\Wspp$ (here $\Wspp$ is again an upper bound on the steady state performance\footnote{$\Wspp$ is still an upper bound on stationary performance when pickup and service times are included in our model. However, in this case a transient upper bound 
	is difficult to derive. As a result, we use the ratio of average per period payoff to $\Wspp$ as a performance measure, with the understanding that it may exceed 1 at early times.}),
while UDOA, DMW, and the fluid-based policy achieve $100\%$, $81\%$, and $68\%$ of $\Wspp$, respectively. 
The performance of the static policy converges very slowly to $\Wspp$, leading to poor transient performance.\footnote{For example, the average payoff generated by static policy in the last hour of a $20$-hour period is $0.96\Wspp$.}
The performance of the DMW policy deteriorates over time because the ``fake packets'' it generates accumulate in the system.

\begin{figure}
	\centering
	\begin{minipage}{0.47\textwidth}
		\centering
		\includegraphics[width=0.9\textwidth]{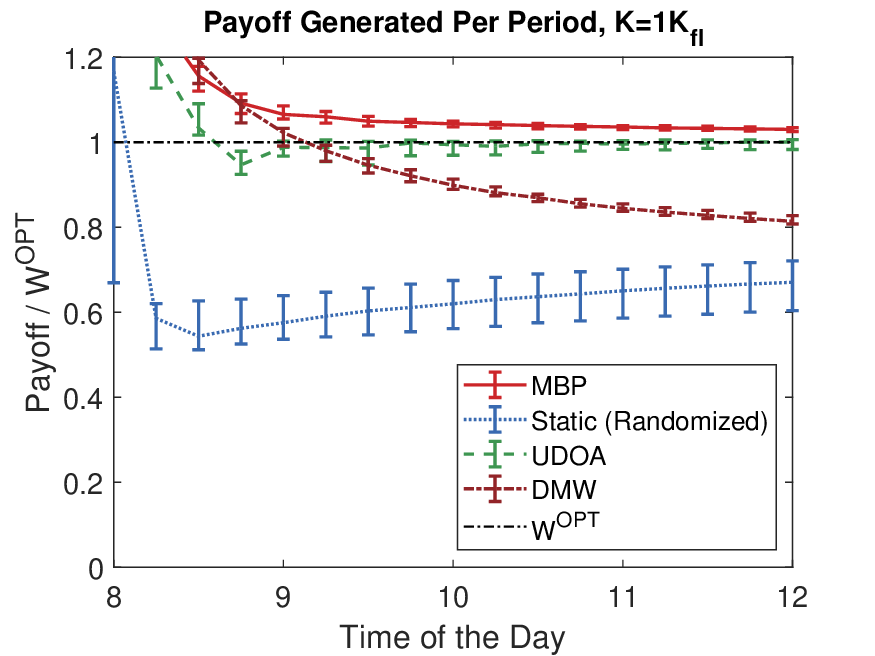} 
		\caption{Per period payoff under the MBP, UDOA, DMW, and the fluid-based policy, relative to $\Wspp$.
		For each data point we run 100 experiments; the error bars represent the 90\% confidence intervals.
		}	
		\label{fig:payoff_excess}
	\end{minipage}\hfill
	\begin{minipage}{0.47\textwidth}
		\centering
		\includegraphics[width=0.9\textwidth]{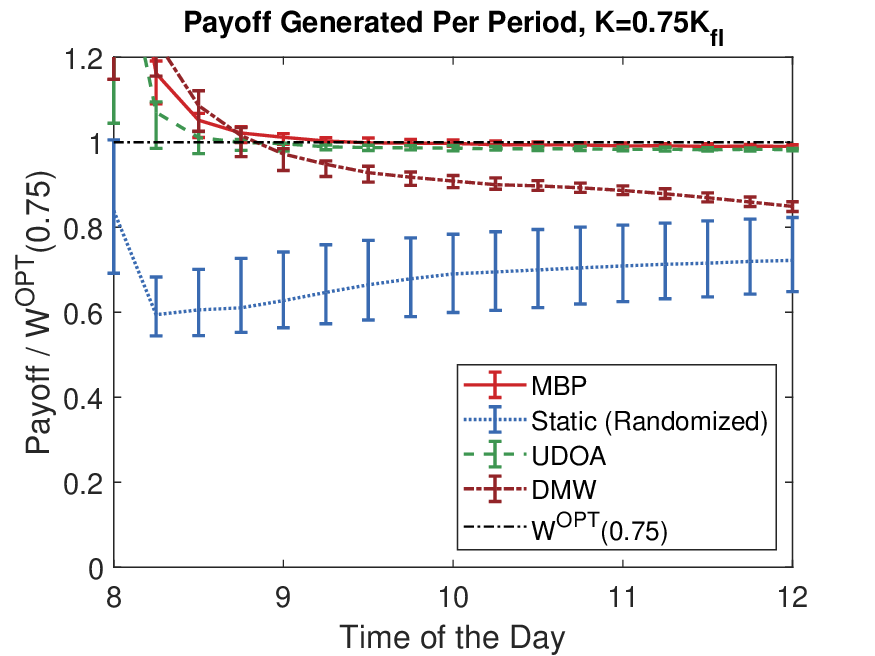} 
		\caption{Per period payoff under the (modified) MBP, (modified) UDOA, (modified) DMW, and the fluid-based policy, relative to $\Wspp({0.75})$, the value of SPP along with constraint \eqref{eq:supply_constraint_matrix} for $K = 0.75 K_{{\rm \tiny fl}}$.
		For each data point we run 100 experiments; the error bars represent the 90\% confidence intervals.
		}
		\label{fig:payoff_scarce}
	\end{minipage}
\end{figure}

\emph{The scarce supply case.} In the scarce supply case, e.g., $K=0.75 K_{\textup{fl}}$, no policy can achieve a stationary performance of $\Wspp$; rather we have a steady-state upper bound of $\Wspp(0.75) \approx 
0.86 \Wspp$ for this $K$, where $\Wspp(0.75)$ is the value of the problem given by \eqref{eq:fluid_obj}-\eqref{eq:fluid_dmd_constr} together with the supply constraint \eqref{eq:supply_constraint_matrix} below. 
Figure \ref{fig:payoff_scarce} shows that the MBP policy again vastly outperforms the DMW policy and the fluid-based policy, and has similar performance to the UDOA policy in the scarce supply case. 
MBP generates average per period payoff that is $99\%$ of the steady-state upper bound  
over 4 hours, while the UDOA, DMW, and the fluid-based policy achieve $98\%$, $85\%$, and $74\%$ resp. of the steady-state upper bound over the same period. Reassuringly, the mean value of $v(t)$ in our simulations of supply-aware MBP is within 10\% of the optimal dual variable to the tightened supply constraint \eqref{eq:tightened_supply_constraint_matrix} in the supply-aware SPP (\eqref{eq:fluid_obj}-\eqref{eq:fluid_dmd_constr} along with \eqref{eq:tightened_supply_constraint_matrix}); both values are close to $0.50$.
Again, we observe that the average performance of static policy improves (slowly) as the time horizon gets longer, while the performance of DMW deteriorates.

\textbf{Supply-aware MBP policy.} In order to heuristically modify MBP to account for travel times, we begin by observing that the SPP must now include a Little's law constraint. (The same observation was previously leveraged by \citealt{braverman2016empty} and \citealt*{banerjee2016pricing} to formally handle travel times, albeit under the assumption that travel times are i.i.d. exponentially distributed.) Our heuristic modification of MBP will maintain an estimate of the shadow price corresponding to the Little's law constraint, and penalize rides appropriately.


Applying Little's Law, if the optimal solution {$\bz^*$ of the SPP 
(here we work with the special case where $\bphi$ does not depend on $t$) 
is realized as the average long run assignment, the mean number of cars which are occupied in picking up or transporting customers is
$
\sum_{j,k\in V}\sum_{i \in \cP(j)}D_{ijk}\cdot z^*_{ijk}\, ,
$ for $D_{ijk} \triangleq \tilde{D}_{ij} + \hat{D}_{jk}$,
where $\tilde{D}_{ij}$ is the pickup time from $i$ to $j$ and $\hat{D}_{jk}$ is the travel time from $j$ to $k$.
We augment the SPP with the additional supply constraint
\begin{align}\label{eq:supply_constraint_matrix}
\sum_{j,k\in V}\sum_{i \in \cP(j)}D_{ijk}\cdot z_{ijk}\leq K\, ,
\end{align}
which simply encodes that the average number of cars occupied at any time cannot exceed $K$. 
We propose and {test in the simulation} the following heuristic policy inspired by MBP, that additionally incorporates the supply constraint. We call it \emph{supply-aware MBP}. Given a demand arrival with origin $j$ and destination $k$, the policy makes its decision as per:
\begin{align*}
&i^* \leftarrow \arg \max_{i\in\cP(j)}\left\{ w_{ijk} +  f(\bar{q}_i[t]) - f(\bar{q}_k[t])  - v[t] D_{ijk} \right \}\\\
&\textbf{If}\ w_{i^*jk} + f(\bar{q}_{i^*}[t]) - f(\bar{q}_{k}[t]) - v[t] D_{i^*jk} \geq 0\  \textup{\textbf{and}} \ q_{i^*}[t] > 0\, , \textup{dispatch from $i^*$, \textbf{else} Drop,}
\end{align*}
We define the tightened supply constraint as
\begin{align}\label{eq:tightened_supply_constraint_matrix}
\sum_{j,k\in V}\sum_{i \in \cP(j)}D_{ijk} \cdot z_{ijk}\leq 0.95K\, ,
\end{align}
where the coefficient of $K$ is the flexible ``utilization'' parameter, that we have set at 0.95, meaning that we are aiming to keep $5\%$ vehicles free on average, systemwide.\footnote{Keeping a small fraction of vehicles free is helpful in managing the stochasticity in the system. Note that the present paper does not study how to systematically choose the utilization parameter.}\pqa{Need to specify which congestion function did we use.}
Here $v[t]$ is the current estimate of the shadow price for the ``tightened'' version of supply constraint \eqref{eq:tightened_supply_constraint_matrix}.
{We use the congestion function given in \eqref{eq:inv_sqrt_mirror_map}, i.e., $f_j(\bar{q}_j)=\sqrt{m} \cdot\bar{q}_j^{-1/2}$,  in our numerical simulations. An important detail here is that the queue lengths are normalized by the estimated number of \emph{free} cars $K - 0.95K = 0.05K$ instead of $K$. 
}
We update $v[t]$ as
\begin{align*}
&v[t+1]
=
\Bigg[v[t] + \\
&\frac{1}
{K} \left (
\sum_{j,k\in V}\sum_{i \in \cP(j)}D_{ijk}\cdot \mathds{1}\{(o[t],d[t])=(j,k),\textup{MBP would dispatch from $i$}\}- 0.95K\right ) \Bigg]^+\, .
\end{align*}
An iteration of supply-aware MBP is equivalent to executing a (dual) stochastic mirror descent step on the supply-aware SPP \eqref{eq:fluid_obj}-\eqref{eq:fluid_dmd_constr} along with \eqref{eq:tightened_supply_constraint_matrix}. 
\yka{The previous sentence was not clear to me.}

\yka{``We use the congestion function $f(q_i) = c \log q_i$ in our numerical simulations, with $c= \max_{i,j,k\in V}w_{ijk}$.'' I suggest to delete this sentence. your summary hasn't said how you specify $w$s, so this detail is not helpful.}



\subsubsection{MBP Policy in Large Networks}\label{subsec:sims-large-network-main-paper}
Recall that in Corollary \ref{cor:stationary-main-result}, the steady-state optimality gap of MBP is shown to be $O\left(\frac{m^2}{K}\right)$ for congestion function \eqref{eq:inv_sqrt_mirror_map}.
Compared with the $O\left(\frac{m}{K}\right)$ bound for the fluid-based policy proved in \cite{banerjee2016pricing}, our bound for MBP has the same dependence on $K$ but worse dependence on $m$.
A natural question is whether the worse dependence on $m$ reflects poorer performance, or if it is a proof artifact.
We conduct numerical experiments in Appendix \ref{subsec:sims-large-network} to study this question.

\textbf{Summary of findings.} We construct a family of instances that has the same total demand rate, but different network sizes $m$.
We compare the performance of our MBP policy with the fluid-based policy in \cite{banerjee2016pricing} for different values of fleet size $K$ and network size (i.e., number of locations) $m$. 
The results demonstrate that MBP consistently outperforms the fluid-based policy in steady state across different choices of $m$ and $K$.
Also, the steady-state suboptimality of MBP appears to scale as $m/K$ (and not $m^2/K$, which was the scaling of our formal upper bound on the optimality gap). 

%
%

\section{Discussion}\label{sec:discuss}
In this paper we considered the payoff maximizing dynamic control of a closed network of resources.
We proposed a novel family of policies called Mirror Backpressure (MBP), which generalize the celebrated backpressure policy such that it executes mirror descent with the desired mirror map, while retaining the simplicity of backpressure.
The MBP policy overcomes the challenge stemming from the no-underflow constraint and it does not require any knowledge of demand arrival rates.
We proved that MBP achieves good transient performance for demand arrival rates which are stationary or vary slowly over time, losing at most $O\left(\frac{K}{T}+\frac{1}{K} +\sqrt{\eta K}\right)$ payoff per customer, where $K$ is the number of supply units, $T$ is the number of customers over the horizon of interest, and $\eta$ is the average rate of change in demand arrival rates per customer arrival. We considered a variety of control levers: entry control, assignment control and pricing, and allowed for finite buffer sizes. We discussed the application of our results to the control of shared transporation systems and scrip systems.

One natural question is whether our bounds capture the right scaling of the per customer optimality gap of MBP with $K$, $T$ and $\eta$, relative to the best policy which is given exact demand arrival rates and horizon length $T$ in advance. 
Consider the joint entry-assignment setting (Section~\ref{sec:model}).  It is not hard to construct examples showing that each of the terms in our bound is unavoidable: a $1/K$ optimality gap arises in steady state (under stationary demand arrival rates) for instance in a two-node entry-control-only example where the two demand arrival rates are exactly equal to each other, the $K/T$ term arises because over a finite horizon the flow balance constraints need not be satisfied exactly and MBP does not exploit this flexibility fully, and the $\sqrt{\eta K}$ term arises in examples where demand arrival rates oscillate (with a period of order $\sqrt{K/\eta}$) but MBP does not take full advantage of the flexibility to allow queue lengths to oscillate alongside. We omit these examples in the interest of space.

We point out some interesting directions that emerge from our work:
\begin{compactenum}[1.,wide, labelwidth=!, labelindent=0pt]
    \yka{any thoughts on whether this may be possible and how?}
	%
    \item \textit{Improved performance via ``centering'' MBP based on demand arrival rates.}
    If the optimal shadow prices $\by^*$ are known (or learned by learning $\bphi$ via observing demand), we can modify the congestion function to 
    $\tilde{f}_j(\bar{q}_j)= y_j^* + f(\bar{q}_j)$.
    For the resulting ``centered'' MBP policy, based on the result of \cite{huang2009delay} and the convergence of mirror descent, we are optimistic that the steady state regret will decay exponentially in $K$. 

    \item Another promising direction is to pursue the viewpoint that there is an MBP policy which (very nearly) maximizes the steady state rate of payoff generation, specifically for the choice of congestion functions $f_j(\cdot)$ that are the discrete derivatives of the relative value function $F(\nq)$ (for the average payoff maximization dynamic programming problem) with respect to $\bar{q}_j$; see Chapter 7.4 of \cite{bertsekas1995dynamic} for background on dynamic programming. Thus, estimates of the relative value function $F(\nq)$ can guide the choice of congestion function.
\end{compactenum}

\setstretch{1.0}

\bibliographystyle{abbrvnat}
\bibliography{bib_mirror_bp}

%

\newpage


\setcounter{page}{1}
\appendix

\begin{center}
	{\LARGE {\textbf{Appendix to ``Blind Dynamic Resource Allocation in Closed Networks via Mirror Backpressure"}}}
\end{center}
\smallskip


The appendix is organized as follows.
\begin{itemize}
  %
  %
      %
  \item In Appendix~\ref{appen:proof-of-fluid-ub} we prove Proposition \ref{prop:fluid_ub}.
  \item In Appendix~\ref{append:key-lemmas} we prove the lemmas in Section \ref{subsec:policy-gap}, including Lemma \ref{lem:one_step_lyap}, Lemma \ref{lem:dual_subopt}, and Lemma \ref{lem:bound_rhs} (and its generalization).
      %
  \item In Appendix~\ref{append:proof-main-theorems} we prove Proposition \ref{lem:SPPavg-vs-avgofSPPt}.
      %
        %
  \item In Appendix~\ref{append:proof-main-regret} we prove Theorem \ref{thm:main_regret}, Theorem \ref{thm:main-result-finite-buffer} and their generalization that holds for a large class of congestion functions which satisfy a certain growth condition.
  \item In Appendix~\ref{appen:jpa} we outline the proof for the JPA setting.
  \item In Appendix~\ref{appen:subsec:numerics-ridehailing}, we provide further details of the simulation setting.
  \item In Appendix~\ref{sec:scrip-systems} we describe the application to scrip systems.
  \item In Appendix~\ref{append:additional-proofs} we provide additional proofs and examples that supports some of the arguments made in the paper.
  \item In Appendix~\ref{appen:MBP-as-MD} we show that MBP executes stochastic mirror descent on the controller's fluid limit optimization problem.
  %
\end{itemize}

\section{Proof of Proposition \ref{prop:fluid_ub}}\label{appen:proof-of-fluid-ub}
For easier reference, we rewrite the static planning problem (SPP) for JEA below:
\begin{align}
	\hspace{-0.3cm}\textup{SPP($\bphi$)}:  \textup{maximize}_{\bz} \; &
	\sum_{\tau\in \cT, j\in\mathcal{P}(\tau),k\in\mathcal{D}(\tau)} 
	w_{j\tau k} z_{j\tau k}
	\label{eq:fluid_linear_obj-z}\\
	\textup{s.t.}\; &
	\sum_{\tau\in \cT,j\in\mathcal{P}(\tau),k\in\mathcal{D}(\tau)} z_{j\tau k}(\boe_j - \boe_k) =\bzero
	\hspace{2.9cm} \textup{(flow balance)}\label{eq:fluid_linear_flow_bal-z}\\
	&\sum_{j\in \mathcal{P}(\tau), k\in \mathcal{D}(\tau)}z_{j\tau k}\leq \phi_\tau \, , \ z_{j\tau k} \geq 0  \quad \forall j,k \in V, \tau\in\cT
	\hspace{0.1cm}\textup{(demand constraint)}.\label{eq:fluid_linear_dmd_constr-z}
\end{align}
The variable $z_{j\tau k}$ can be interpreted as the flow of demand type $\tau$ being served by pickup location $j$ and dropoff location $k$.
{(Note that our LP formulation here has a cosmetic difference from that in \eqref{eq:fluid_obj}-\eqref{eq:fluid_dmd_constr}: here we find that it simplifies our analysis to use the \emph{flows} $z_{j\tau k}$ as the LP variables instead of using the \emph{fractions} $x_{j \tau k}$ of demand of type $\tau$ served by pickup location $j$ and dropoff location $k$ as the variables. The correspondence is simply $z_{j\tau k} \leftrightarrow \phi_\tau x_{j\tau k}$.)}

The idea behind Proposition~\ref{prop:fluid_ub} is as follows.
As is typical in such settings, $W^{\aspp}$ is an upper bound on the payoff if the flow constraints are satisfied in expectation.
However, since the flow constraints can be slightly violated in the finite horizon setting under consideration, we obtain an upper bound by slightly relaxing the flow constraint \eqref{eq:fluid_linear_flow_bal-z} in the SPP($\bar{\bphi}$) to
\begin{align}
	\left|
	\mathbf{1}_S^{\T}
	\left(\sum_{\tau\in \cT,j\in\cP(\tau),k\in\cD(\tau)}
	z_{j\tau k}(\boe_j - \boe_k)\right)
	\right|
	\leq \frac{K}{T} \qquad \forall \; S \subseteq V \, ,
	\label{eq:approx_flow_balance_jea-z}
\end{align}
where $\mathbf{1}_S$ is the vector with $1$s at nodes in $S$ and $0$s at all other nodes.

We establish two key lemmas to facilitate the proof of Proposition~\ref{prop:fluid_ub}. The first lemma (Lemma \ref{lem:fluid_upper_bound_jea-timevar}) shows that the expected payoff cannot exceed the value of the finite horizon demand-averaged SPP($\bar{\bphi}$).

\begin{lem}\label{lem:fluid_upper_bound_jea-timevar}
	For any horizon $T< \infty$, any $K$ and any starting state $\bq[0]$, the expected payoff generated by any feasible joint entry-assignment control policy $\pi$ is upper bounded by the value of the linear program defined by SPP($\bar{\bphi}$) with the flow constraint \eqref{eq:fluid_linear_flow_bal-z} replaced by \eqref{eq:approx_flow_balance_jea-z}.
\end{lem}
\begin{proof}
	Let $\pi$ be any feasible policy.
	For each $\tau\in\cT$ and $j\in\cP(\tau),k\in \cD(\tau)$, define
	\begin{align*}
		\widebar{z}_{j \tau k} \triangleq \frac{1}{T}\sum_{t=0}^{T-1}
		\mathbb{E}[x_{j\tau k}[t] \ind\{ \tau[t]=\tau \} ]\, .
	\end{align*}
	In words, $\widebar{z}_{j \tau k}$ is the average flow over $1 \leq t \leq T$ of the demand type $\tau$ being served by pickup location $j$ and dropoff location $k$. Since for each $t$, $z_{j \tau k}[t] \triangleq  \mathbb{E}[\ind\{\tau[t] = \tau\} x_{j\tau k}]$ satisfies the period-specific demand constraint \eqref{eq:fluid_linear_dmd_constr-z} for all $\tau \in \cT, j \in \cP[\tau], k \in \cD(\tau)$, the averaged constraints \eqref{eq:fluid_linear_dmd_constr-z} must hold for $\widebar{\bz}$.

	We can write the expected per-period payoff collected in the first $T$ periods as:
	\begin{align*}
		W^{\pi}_T
		&=\;
		\frac{1}{T}\sum_{t=0}^{T-1}\mathbb{E}
		\left[
		\sum_{\tau \in \cT,j\in\cP(\tau),k\in\cD(\tau)} w_{j\tau k} x_{j\tau k}[t] \ind \{\tau[t]=\tau\}
		\right]\\
		&=\;
		\sum_{\tau \in \cT,j\in\cP(\tau),k\in\cD(\tau)}
		w_{j\tau k}
		\widebar{z}_{j\tau k}\, ,
	\end{align*}
	where we only used linearity of expectation. In words, the expected per-period payoff is the objective \eqref{eq:fluid_linear_obj-z} evaluated at $\widebar{\bz}$.
	Similarly, for the time-average of the change of queue length we have:
	\begin{align*}
		\frac{1}{T}\mathbb{E}[\bq[T] - \bq[0]]
		=
		\sum_{\tau\in\cT,j\in\cP(\tau),k\in\cD(\tau)}
		\widebar{z}_{j\tau k}(\boe_j - \boe_k)\, ,
	\end{align*}
	which implies that $\widebar{\bz}$ satisfies the approximate flow constraints \eqref{eq:approx_flow_balance_jea-z} since $|\sum_{j \in S}q_j[T] - q_j[0]| \leq K$ for all $S \subset V$.
	(Because there are only $K$ resources circulating in the system, the net outflow from any subset of nodes $S\subseteq V$ should not exceed $K$ in magnitude.)

	We have shown that $\widebar{\bz}$ is feasible for the given linear program with constraints  \eqref{eq:approx_flow_balance_jea-z} and \eqref{eq:fluid_linear_dmd_constr-z}, and the expected payoff earned $W^{\pi}_{T}$  is identical to objective \eqref{eq:fluid_linear_obj-z} evaluated at $\widebar{\bz}$. It follows that $W^{\pi}_{T}$ is upper bounded by the value of the optimization problem defined by \eqref{eq:fluid_linear_obj-z}, \eqref{eq:approx_flow_balance_jea-z} and \eqref{eq:fluid_linear_dmd_constr-z} regardless of the initial configuration $\bq[0]$.
	This concludes the proof.
\end{proof}

In order to facilitate the second key lemma, we first prove a supporting lemma (Lemma \ref{lem:flow-decomp-jea-z}).
We call $\bz$ a (directed)  \emph{acyclic flow} if there is no (directed) cycle
\begin{align*}
	\mathcal{C}=\big(
	\,(j_1,\tau_1,j_2),(j_2,\tau_2,j_3),\cdots,(j_{s},\tau_s,j_{s+1}=j_1)\, \big) \, , \qquad \textup{where } j_r\in V \textup{ and }\tau_r \in \cT \textup{ for } r=1,2,\cdots,s\, ,
\end{align*}
such that
\begin{align*}
	z_{j_r,\tau_r,j_{r+1}}>0 \qquad \textup{for all }r=1,\cdots,s \, .
\end{align*}
In words, there is no cycle $\cC$ such that there is a positive flow along $\cC$.
\begin{lem}\label{lem:flow-decomp-jea-z}
	Any feasible solution $\bz^F$ of SPP($\bar{\bphi}$) satisfying approximate flow balance \eqref{eq:approx_flow_balance_jea-z} and the average demand constraint \eqref{eq:fluid_linear_dmd_constr-z} (with $\bphi=\bar{\bphi}$) can be decomposed as
	\begin{align}
		\bzF = \bzS + \bzDAG \, ,
	\end{align}
	where $\bzS$ is a feasible solution for SPP($\bar{\bphi}$) satisfying exact flow balance \eqref{eq:fluid_linear_flow_bal-z} and \eqref{eq:fluid_linear_dmd_constr-z} (with $\bphi=\bar{\bphi}$), and $\bzDAG$ is an acyclic flow satisfying \eqref{eq:approx_flow_balance_jea-z} and \eqref{eq:fluid_linear_dmd_constr-z} (with $\bphi=\bar{\bphi}$).
\end{lem}

\begin{proof}
	The existence of such a decomposition can be established using a standard flow decomposition argument \citep[see, e.g.,][]{williamson2019network}: Start with $\bzS = \mathbf{0}$ and $\bzDAG = \bzF$. Then, iteratively, if $\bzDAG$ includes a cycle $\cC$ with a positive flow along $\cC$ as above, move a flow of $u(\cC) \triangleq \min_{1\leq r \leq s} z_{j_r,\tau_r,j_{r+1}}$ along $\cC$ from $\bzDAG$ to $\bzS$, via the updates
	\begin{align*}
		\zS_{j_r,\tau_r, j_{r+1}} &\leftarrow \zS_{j_r,\tau_r, j_{r+1} } + u(\cC)\, ,&
		\zDAG_{ j_r,\tau_r, j_{r+1} } &\leftarrow \zDAG_{ j_r,\tau_r, j_{r+1} } - u(\cC)\, ,
	\end{align*}
	for all $r= 1, 2, \ldots, s$.
	This iterative process maintains the following invariants which hold at the end of each iteration:
	\begin{compactitem}[leftmargin=*]
		\item $\bzS$ remains fesible for SPP($\bar{\bphi}$), in particular, it satisfies flow balance \eqref{eq:fluid_linear_flow_bal-z}.
		\item $\bzF = \bzS + \bzDAG$ remains true.
		\item It remains true that
		\begin{align*}
			\sum_{\tau\in \cT,j\in\cP(\tau),k\in\cD(\tau)} \zDAG_{j\tau k}(\boe_j - \boe_k)
			=\
			\sum_{\tau\in \cT,j\in\cP(\tau),k\in\cD(\tau)}
			\zF_{j\tau k}(\boe_j - \boe_k)\, .
		\end{align*}
		i.e., $\bzDAG$ has the same net inflow/outflow from each supply node as $\bzF$.  In particular, $\bzDAG$ satisfies approximate flow balance \eqref{eq:approx_flow_balance_jea-z}.
	\end{compactitem}
	Moreover, the iterative process progresses monotonically: Observe that $\bzS$ coordinate-wise (weakly) increases monotonically, whereas $\bzDAG$ coordinate-wise (weakly) decreases monotonically (but preserves $\bzDAG \geq \mathbf{0}$).
	Since we also know that $\bzS$ is bounded, it follows that this iterative process converges. Moreover, in the limit
	it must be that there is no remaining cycle with positive flow in $\bzDAG$ (else we observe a contradiction with the fact that the process has converged). Hence, $\bzS$ and $\bzDAG$ at the end of the process provide the claimed decomposition.
\end{proof}

Using this supporting lemma, we now establish the second key lemma which shows that the value of SPP($\bar{\bphi}$) with approximate flow balance constraints \eqref{eq:approx_flow_balance_jea-z} cannot be much larger than the value of SPP($\bar{\bphi}$) which imposes exact flow balance constraints \eqref{eq:fluid_linear_flow_bal-z}.
\begin{lem}\label{lem:WoptT-ub-Wopt-jea-z}
	The value of the linear program defined by \eqref{eq:fluid_linear_obj-z}, the approximate flow balance constraints \eqref{eq:approx_flow_balance_jea-z} and time-averaged demand constraints \eqref{eq:fluid_linear_dmd_constr-z} is bounded above by
	\begin{align*}
		W^\aspp + m\frac{K}{T} \, .
	\end{align*}
	where $W^\aspp$ is the value of the linear program $\aspp$ which imposes exact flow balance constraints \eqref{eq:fluid_linear_flow_bal-z}.
\end{lem}

\begin{proof}
	We appeal to the decomposition from Lemma~\ref{lem:flow-decomp-jea-z} to decompose any feasible solution $\bzF$ to the finite horizon fluid problem as
	\begin{align*}
		\bzF = \bzS + \bzDAG \, ,
	\end{align*}
	where $\bzS$ is feasible for $\aspp$ and $\bzDAG$ is a directed acyclic flow that satisfies approximate flow balance \eqref{eq:approx_flow_balance_jea-z} and the averaged demand constraints \eqref{eq:fluid_linear_dmd_constr-z}.
	Hence, the objective \eqref{eq:fluid_linear_obj-z} can be written as the sum of two terms
	\begin{align}\label{eq:obj-decomp-dag-jea}
		\sum_{\tau \in \cT,j\in\cP(\tau),k\in\cD(\tau)}
		w_{j\tau k}
		 \zF_{j\tau k}
		= \
		\sum_{\tau \in \cT , j\in\cP(\tau),k\in\cD(\tau)}
		w_{j\tau k}
		(\zS_{j\tau k}
		+
		\zDAG_{j\tau k})\, ,
	\end{align}
	and each of the terms can be bounded from above.
	By definition of $W^\aspp$ we know that
	\begin{align*}
		\sum_{\tau \in \cT,j\in\cP(\tau),k\in\cD(\tau)}
		w_{j\tau k}
		\zS_{j\tau k}
		\leq
		W^\aspp\, .
	\end{align*}
	We will now show that
	\begin{align*}
		\sum_{\tau \in \cT,j\in\cP(\tau),k\in\cD(\tau)}
		w_{j\tau k}
		\zDAG_{j\tau k}
		\leq (m-1)\frac{K}{T} <
		m\frac{K}{T}\, .
	\end{align*}
	The lemma will follow, since this will imply an upper bound of $W^\aspp + m\frac{K}{T}$ on the objective for any $\bzF$ satisfying \eqref{eq:approx_flow_balance_jea-z} and \eqref{eq:fluid_linear_dmd_constr-z}.
	
	Consider $\bzDAG$. Since it is an acyclic flow, there is an ordering $(j_1, j_2, \ldots, j_m)$ of the nodes in $V$ such that all positive flows move supply from an earlier node to a later node in this ordering. More precisely, it holds that for any $\tau\in\cT$,
	\begin{align}
		\zDAG_{j_l,\tau, j_r} = 0 \qquad \forall \; l > r \ \textup{ s.t. } j_l \in \cP(\tau), j_r \in \cD(\tau)\, .
		\label{eq:DAG-order-jea}
	\end{align}
	Now consider the subsets $A_{\ell} \triangleq \{j_1, j_2, \ldots, j_{\ell}\} \subset V$ for $\ell = 1, 2, \ldots, m-1$.
	Note that from \eqref{eq:DAG-order-jea}, $\bzDAG$ does not move any supply from $V \backslash A_{\ell}$ to $A_{\ell}$. Hence we have
	\begin{align}
		\mathbf{1}_{A_{\ell}}^{\T}
		\left(\sum_{\tau\in\cT,j\in\cP(\tau),k\in\cD(\tau)}
		\zDAG_{j\tau k}(\boe_j - \boe_k)\right)
		= &\sum_{\tau \in \cT, j\in\cP(\tau)\cap A_{\ell},k\in\cD(\tau)\cap (V\backslash A_{\ell})} \zDAG_{j\tau k} \; \nonumber\\
		\leq &\; \frac{K}{T}
		\qquad \forall \; l = 1, 2, \ldots, m-1\, , \label{eq:cut-bound-z}
	\end{align}
	We made use of \eqref{eq:approx_flow_balance_jea-z} to obtain the upper bound.
	Further, note that for each $\zDAG_{j_l,\tau, j_r}$ with $l < r$, the term $ \zDAG_{j_l,\tau, j_r}$ is part of the above sum for $\ell = l$. Motivated by this observation, we bound the expected payoff of $\bzDAG$ by first using our assumption $\max_{j,k\in V,\tau\in\cT}|w_{j\tau k}|\leq 1$  to bound the payoff by the sum of $\zDAG$s (the first inequality below), and then bounding the sum of $\zDAG$s by ``allocating'' $\zDAG_{j_l,\tau, j_r}$ to the left-hand side of \eqref{eq:cut-bound-z} with $\ell = l$ and summing over $\ell$  (the second inequality below):
	\begin{align*}
		\sum_{\tau\in \cT,j\in\cP(\tau),k\in\cD(\tau)} w_{j\tau k} \zDAG_{j\tau k} 
		\leq\; &
		\sum_{\tau\in\cT, j\in\cP(\tau),k\in\cD(\tau)}
		\zDAG_{j\tau k}\\
		\leq\; &
		\sum_{1 \leq \ell < m}\
		\sum_{\tau \in \cT, j\in\cP(\tau)\cap A_{\ell},k\in\cD(\tau)\cap (V\backslash A_{\ell})} \zDAG_{j\tau k}\\
		\leq\; & (m-1) \frac{K}{T}\, .
	\end{align*}
	The last inequality uses \eqref{eq:cut-bound-z} summed over $\ell$. This completes the proof.
\end{proof}

\begin{proof}[Proof of Proposition~\ref{prop:fluid_ub}]
	The proposition follows immediately from Lemmas~\ref{lem:fluid_upper_bound_jea-timevar} and \ref{lem:WoptT-ub-Wopt-jea-z}.
\end{proof}

\section{Bounding the Policy Gap: Proofs in Section \ref{subsec:policy-gap}}
\label{append:key-lemmas}

\subsection{Proof of Lemma \ref{lem:one_step_lyap}}\label{append:proof-one-step-lyapunov}
For ease of reference, we repeat Lemma \ref{lem:one_step_lyap} below.
\onesteplyap*
%

\begin{proof}
	For congestion function $f_j(\bar{q}_j)$ that are strictly increasing and continuous for each $j$, we consider the Lyapunov function $F(\nq)$ which is the antiderivative of $\bof(\nq)$.
	The Bregman divergence associated with $\bof(\nq)$ is defined as:
	\begin{align}\label{eq:bregman}
	D_{ F }(\nq_1,\nq_2)
	=
	F (\nq_1) - F (\nq_2) - \langle \bof(\nq_1), \nq_1 - \nq_2 \rangle\, .
	\end{align}
	Plugging $\nq_1 = \nq[t+1]$, $\nq_2 = \nq[t]$ into \eqref{eq:bregman} and rearranging the terms, we have:
	\begin{align*}
	F (\nq[t+1]) - F (\nq[t])
	=\
	\langle
	\bof (\nq[t]), \nq[t+1]-\nq[t] \rangle
	+
	D_{ F }(\nq[t+1],\nq[t])\, .
	\end{align*}
	Subtracting $\frac{1}{\tK}\sum_{\tau\in\cT}\phi_{\tau}^t\sum_{j\in\cP(\tau),k\in\cD(\tau)}w_{j\tau k}\cdot x_{j\tau k}[t]$ on both sides and taking conditional expectation given $\nq[t]$, we have:
	\begin{align}
	& \mathbb{E}[F (\nq[t+1])|\nq[t]] - F (\nq[t]) -
	\frac{1}{\tK}\sum_{\tau\in\cT}\phi_{\tau}^t\sum_{j\in\cP(\tau),k\in\cD(\tau)}w_{j\tau k} \mathbb{E}[x_{j\tau k}[t]|\nq[t]] \nonumber\\
	=\ &
	\underbrace{
		-\frac{1}{\tK}\sum_{\tau\in\cT}\phi_{\tau}^t\sum_{j\in\cP(\tau),k\in\cD(\tau)}w_{j\tau k} \mathbb{E}[x_{j\tau k}[t]|\nq[t]]
		+
		\langle \bof(\nq[t]), \mathbb{E}[\nq[t+1]|\nq[t]]-\nq[t] \rangle
	}_{\textup{(I)}}\nonumber\\
	&+
	\underbrace{\mathbb{E}\big[D_{ F }(\nq[t+1],\nq[t])|\nq[t]\big ]}_{\textup{(II)} }\, .\label{eq:regret_decomp}
	\end{align}
	
	Let $x_{j\tau k}^{\textup{NOM}}[t]$ be the ``nominal'' control that ignores the no-underflow constraint, i.e.
	\begin{align}
	(x_{j\tau k}^{\textup{NOM}})[t]=
	\left\{
	\begin{array}{ll}
	1 & \textup{ if } w_{j\tau k} +
	f_j(\bar{q}_j[t]) - f_k(\bar{q}_k[t])\geq 0\\
	0 & \textup{ otherwise.}
	\end{array}
	\right.
	\label{eq:nom-defn}
	\end{align}
	It immediately follows that
	\begin{align}
	(x_{j\tau k}^{\textup{MBP}})[t]
	=
	(x_{j\tau k}^{\textup{NOM}})[t]
	\cdot
	\mathds{1}\{q_j[t] > 0\} \, .
	\label{eq:nom-vbp}
	\end{align}
	
	With a slight abuse of notation, denote $\bx^{\textup{NOM}}$ as $\tilde{\bx}$, $\bx^{\textup{MBP}}$ as $\bx$.
	Rearranging the terms in (I) and plugging in \eqref{eq:nom-vbp}, we have
	\begin{align*}
	\textup{(I)} =&\
	-\frac{1}{\tK}
	\sum_{\tau\in\cT}
	\phi_{\tau}^t
	\sum_{j\in\cP(\tau),k\in\cD(\tau)}
	\left(
	w_{j\tau k}
	+
	f_j(\bar{q}_{j}[t]) - f_k(\bar{q}_{k}[t])
	\right)
	\cdot
	\mathbb{E}[x_{j\tau k}[t]|\nq[t]]\\
	=&\
	-\frac{1}{\tK}
	\sum_{\tau\in\cT}
	\phi_{\tau}^t
	\sum_{j\in\cP(\tau),k\in\cD(\tau)}
	\left(
	w_{j\tau k}
	+
	f_j(\bar{q}_{j}[t]) - f_k(\bar{q}_{k}[t])
	\right)
	\cdot
	\mathbb{E}[\tx_{j\tau k}[t]|\nq[t]]\\
	&\ +
	\frac{1}{\tK}
	\sum_{\tau\in\cT}
	\phi_{\tau}^t
	\sum_{j\in\cP(\tau),k\in\cD(\tau)}
	\left(
	w_{j\tau k}
	+
	f_j(\bar{q}_{j}[t]) - f_k(\bar{q}_{k}[t])
	\right)
	\cdot
	\mathbb{E}[\tx_{j\tau k}[t]|\nq[t]]
\cdot
\mathds{1}\left\{q_{j}[t]=0\right\}\, .
	\end{align*}
	
	By definition of the 
nominal control $\tilde{\bx}$ and \eqref{eq:partial_dual}, we have:
	\begin{align*}
	&\ -\frac{1}{\tK}
	\sum_{\tau\in\cT}
	\phi_{\tau}^t
	\sum_{j\in\cP(\tau),k\in\cD(\tau)}
	\left(
	w_{j\tau k}
	+
	f_j(\bar{q}_{j}[t]) - f_k(\bar{q}_{k}[t])
	\right)
	\cdot
	\mathbb{E}[\tx_{j\tau k}[t]|\nq[t]] \\
	=&\
	-\frac{1}{\tK}
	\sum_{\tau\in\cT}
	\phi_{\tau}^t
	\sum_{j\in\cP(\tau),k\in\cD(\tau)}
	\left(
	w_{j\tau k}
	+
	f_j(\bar{q}_{j}[t]) - f_k(\bar{q}_{k}[t])
	\right)^{+} \\
	=&\
	-\frac{1}{\tK}\cdot g^t(\bof(\nq[t]))\, .
	\end{align*}
	
	Using the fact that $\max_{j,k\in V,\tau\in\cT}|w_{j\tau k}|=1$, we have
	\begin{align*}
	&\frac{1}{\tK}
	\sum_{\tau\in\cT}
	\phi_{\tau}^t
	\sum_{j\in\cP(\tau),k\in\cD(\tau)}
	\left(
	w_{j\tau k}
	+
	f_j(\bar{q}_{j}[t]) - f_k(\bar{q}_{k}[t])
	\right)
	\cdot
\mathbb{E}[\tx_{j\tau k}[t]|\nq[t]]
\cdot
	\mathds{1}\left\{q_{j}[t]=0\right\}\\
	\leq\ &
	\frac{1}{\tK}
	\sum_{\tau\in\cT}
	\phi_{\tau}^t
	\cdot
	\mathds{1}\left\{q_{j}[t]=0,\ \exists j\right\}+
	\frac{1}{\tK}
	\sum_{\tau\in\cT}
	\phi_{\tau}^t
	\sum_{j\in\cP(\tau),k\in\cD(\tau)}
	\left(
	f_j(\bar{q}_{j}[t]) - f_k(\bar{q}_{k}[t])
	\right)^+
	\cdot
	\mathds{1}\left\{q_{j}[t]=0\right\} \\
	\leq\ &
	\frac{1}{\tK}\cdot \mathds{1}\left\{q_{j}[t]=0,\ \exists j\right\}\, .
	\end{align*}
Here the last inequality follows from the assumption that $f_j(\bar{q}_{j}[t]) \leq f_k(\bar{q}_{k}[t])$ for any $j,k\in V$ when $q_j[t]=0$. 
Note that such assumption is satisfied by any congestion function such that $f_j(\bar{q}_j)=f(\bar{q}_j)$ for all $j\in V$ where $f(\cdot)$ is a monotonically increasing function.
%
	
	Combining the above inequality and equality yields
	\begin{align*}
	\textup{(I)}
	\leq\
	-\frac{1}{\tK}\cdot g^t(\bof(\nq[t]))
	+
	\frac{1}{\tK}\cdot \mathds{1}\left\{q_{j}[t]=0,\ \exists j\right\}\, .
	\end{align*}
	
	Now we proceed to bound (II).
	By definition of Bregman divergence, (II) is the second order remainder of the Taylor series of $F(\cdot)$.
	Using the fact that $f(\cdot)$ is increasing, we have
	\begin{align*}
	\textup{(II)}
	\leq\
	&\frac{1}{2}\sum_{j\in V}
	\mathbb{E}\left[ \left(\max_{\bar{q}\in\left[\bar{q}_{j}[t]-\frac{1}{\tK},\bar{q}_{j}[t]+\frac{1}{\tK}\right]}f'_j(\bar{q})
	\right)
	(\bar{q}_j[t]-\bar{q}_j[t+1])^2 |\nq[t] \right]\\
	\leq\ &
	\ \frac{1}{2 \tK^2}\cdot \max_{j\in V} \max_{\bar{q}\in\left[\bar{q}_{j}[t]-\frac{1}{\tK},\bar{q}_{j}[t]+\frac{1}{\tK}\right]}f'_j(\bar{q})
	\, .
	\end{align*}
	
	Plugging the above bounds on (I) and (II) into \eqref{eq:regret_decomp}, we have
	\begin{align*}
	&\ \mathbb{E}[F (\nq[t+1])|\nq[t]] - F (\nq[t]) - \frac{1}{\tK}
	\mathbb{E}[v^{\textup{MBP}}[t]|\nq[t]]\\
	\leq&\
	-\frac{1}{\tK}\cdot g^t(\bof(\nq[t]))
	+
	\frac{1}{2 \tK^2}\cdot \max_{j\in V} \max_{\bar{q}\in\left[\bar{q}_{j}[t]-\frac{1}{\tK},\bar{q}_{j}[t]+\frac{1}{\tK}\right]}f'_j(\bar{q})
	+
	\frac{1}{\tK}\cdot \mathds{1}\left\{q_{j}[t]=0,\exists j\right\}\, .
	\end{align*}
	
	Rearranging the terms yields:
	\begin{align*}
	-\mathbb{E}[v^{\textup{MBP}}[t]|\nq[t]]
	\leq&\
	\tK\left(F (\nq[t]) - \mathbb{E}[F (\nq[t+1])|\nq[t]]\right)
	+
	\frac{1}{2 \tK}\cdot \max_{j\in V} \max_{\bar{q}\in\left[\bar{q}_{j}[t]-\frac{1}{\tK},\bar{q}_{j}[t]+\frac{1}{\tK}\right]}f'_j(\bar{q})\\
	&\ -g^t(\bof(\nq[t]))
	+
	 \mathds{1}\left\{q_{j}[t]=0,\exists j\right\}\, .
	\end{align*}
Adding $W^{\textup{SPP($\bphi^t$)}}$ to both sides concludes the proof.
\end{proof}

\subsection{Proof of Lemma \ref{lem:dual_subopt}}\label{append:proof-dual-subopt}
For ease of reference, we repeat Lemma \ref{lem:dual_subopt} below.
\dualsubopt*

\begin{proof}
	Consider $\by \triangleq (f_j(\bar{q}_j[t])_{j\in V}$ and order the nodes in $V$ in decreasing order of $y_j$ as $y_{i_1}\geq y_{i_2}\geq \cdots y_{i_m}$.
	We will prove the desired result by updating the dual variable $(m-1)$ times and show that the dual objective decreases by at least $\alpha(\bphi)\cdot [y_{i_{r}} - y_{i_{r+1}} - 2]^+$ after the $r$-th update.
	For $r=1$ to $r=m-1$, we repeat the following procedure: if $y_{i_r}-y_{i_{r+1}}\leq 2$, then do nothing and move on to $r+1$; if otherwise, perform the following update:
	\begin{align*}
	y_{i_k} \leftarrow\ y_{i_k} - \left(y_{i_r}-y_{i_{r+1}}- 2\right)\qquad
	\forall 1\leq k \leq r \, .
	\end{align*}
	Recall that $g(\by) = \sum_{\tau\in\cT}\phi_{\tau}\sum_{j\in\cP(\tau),k\in\cD(\tau)}[w_{j\tau k}+y_j-y_k]^+$.
	For the terms where $j,k\in \{i_1,\cdots,i_r\}$ or $j,k\in \{i_{r+1},\cdots,i_m\}$, their value are not affected by the update.
	Consider the terms where $j\in \{i_1,\cdots,i_r\}$, $k\in \{i_{r+1},\cdots,i_m\}$:
	If $y_{i_r}-y_{i_{r+1}}> 2$, then after the update, for $\tau\in\cP^{-1}(j)\cap\cD^{-1}(k)$,
	\begin{align*}
	w_{j\tau k} + y_j - y_k
	\geq\
	w_{j\tau k}
	+
	y_{i_r} - \left(y_{i_r}-y_{i_{r+1}}- 2 \right)
	-
	y_{i_{r+1}}
	\geq
	w_{j\tau k} + 2
	>
	0\, ,
	\end{align*}
	hence the update decrease these terms each by $y_{i_r}-y_{i_{r+1}}- 2$.
	Finally, for the terms where $j\in \{i_{r+1},\cdots,i_m\}, k\in \{i_1,\cdots,i_r\}$, it is easy to verify that their value stay at zero after the update.
	To sum up, such an update decreases $g(\by)$ by at least
	\begin{align*}
	\left(\sum_{\tau\in\cP^{-1}(\{i_1,\cdots,i_r\})\cap\cD^{-1}(\{i_{r+1},\cdots,i_m\})}\phi_{\tau}\right)
	\cdot
	\left[y_{i_r}-y_{i_{r+1}}- 2\right]^+\, .
	\end{align*}
	Note that the first term is lower bounded by $\alpha(\bphi)$ defined in \eqref{eq:strong-connectivity-jea}.
	As a result, after the finishing the procedure, $g(\by)$ decreased by at least:
	\begin{align*}
	\alpha(\bphi)\cdot \sum_{r=1}^{m-1}\left[y_{i_r}-y_{i_{r+1}}- 2 \right]^{+}
	\geq\
	\alpha(\bphi)\cdot \left[y_{i_1} - y_{i_m} - 2 m  \right]^+\, .
	\end{align*}
	By strong duality we have $\min_{\by}g(\by)=W^{\textup{SPP($\bphi$)}}$, hence
	\begin{align*}
	g(\by) -W^{\textup{SPP($\bphi$)}}
	\geq\
	\alpha(\bphi)\cdot \left[\max_{j\in V}y_j - \min_{k\in V}y_k - 2 m \right]^+\, .
	\end{align*}
	This concludes the proof.
\end{proof}

\subsection{Proof of Lemma \ref{lem:bound_rhs} and Its Generalization}\label{append:proof-bound-rhs}
In this section, we first prove a generalized version of Lemma \ref{lem:bound_rhs} (Lemma \ref{lem:rhs-bound-general}), then we show that Lemma \ref{lem:bound_rhs} follows from Lemma \ref{lem:rhs-bound-general} as a corollary.

\subsubsection{A General Result}\label{subsec:growth-condition-proof}
The lemma below generalizes Lemma \ref{lem:bound_rhs} in two aspects: 
(i) it holds for the more general setting (described in Section \ref{subsec:extend-barrier}) where a subset of the nodes can have finite capacity buffers;
(ii) it holds not only for the congestion function \eqref{eq:inv_sqrt_mirror_map}, but also for any congestion function that meet a certain growth condition defined below.
We then verify that the inverse square root congestion function \eqref{eq:inv_sqrt_mirror_map} indeed satisfy the growth condition.


Recall the terms in one-period Lyapunov analysis for the general setting in Section \ref{subsec:extend-barrier} (for the main setting in Section \ref{sec:model}, simply set $\bar{\mathbf{d}}=\mathbf{1}$ and replace the text in $\mathcal{V}_4$ by $q_j[t]=0$, $\exists j\in V$ below):
\begin{align}
	W^{\textup{SPP($\bphi^t$)}} -
	\mathbb{E}[v^{\textup{MBP}}[t]|\nq[t]] \nonumber
	\leq&\
	\underbrace{\tK\left(F (\nq[t]) - \mathbb{E}[F (\nq[t+1])|\nq[t]]\right)}_{\cV_1\textup{ change in potential}}
	+
	\underbrace{\frac{1}{2 \tK}
	\max_{j\in V} \max_{\bar{q}\in\left[\bar{q}_{j}[t]-\frac{1}{\tK},\bar{q}_{j}[t]+\frac{1}{\tK}\right]}f'_j(\bar{q})
	}_{\cV_2\textup{ loss due to stochasticity}}\\
	&\ +
	\underbrace{\left(W^{\textup{SPP($\bphi^t$)}}
		-g^t(\bof(\nq[t]))
		\right)}_{\cV_3\textup{ dual optimality gap}}
	+
	\underbrace{\mathds{1}\left\{q_{j}[t]=0\textup{ or }d_j,\exists j\in V\right\}}_{\cV_4\textup{ loss due to underflow}}\, , \label{eq:one-step-lyap-finite-buffer}
\end{align}
and the definition of $\bar{\bq}$ and $\tilde{K}$:
\begin{align*}
	\bar{q}_j \triangleq \frac{q_j + \bar{d}_j\delta_K}{\tK}
	\quad \textup{for}\quad
	\delta_K = \sqrt{K}
	\quad \textup{and}\quad
	\tK \triangleq K + \left(\sum_{j\in V}\bar{d}_j\right)\delta_K\, .
\end{align*}

\begin{lem}\label{lem:rhs-bound-general}
	In the general setting with finite buffers described in Section \ref{subsec:extend-barrier}, if the congestion functions $(f_j(\cdot))_{j\in V}$ satisfy the growth conditions defined below (Condition~\ref{cond:mirror_map}) with parameters $(\alpha,K_1,M_1,M_2)$, then for $K\geq K_1$,
	\begin{align}\label{eq:rhs-bound-general}
		\cV_2 + \cV_3 + \cV_4 \leq  M_2 \frac{1}{\tK}\, ,
	\end{align}
	where $\cV_2,\cV_3,\cV_4$ are defined in eq. \eqref{eq:one-step-lyap-finite-buffer} and $\tK$ is defined above.
\end{lem}

\begin{cond}[Growth condition for congestion functions]\label{cond:mirror_map}
	We say the congestion functions $(f_j(\cdot))_{j\in V}$ satisfy the growth condition with parameters $(\alpha,K_1, M_1, M_2) \in \mathbb{R}_{++}^4$ if the following holds:
	\begin{enumerate}
		\item For each $j\in V$, $f_j(\cdot)$ is strictly increasing and continuously differentiable. Moreover,
		\begin{enumerate}
			\item For any $K>K_1$, $f_j(\bar{q}_{j}) \leq f_k(\bar{q}_{k})$ (i)~for any $k\in V$ if $q_j=0$, and (ii)~for any $j\in V$ if $q_k=d_k$, $k\in\Vb$.
			\item For any $j,k\in V$, we have $f_j\left(\frac{\bar{d}_j}{\sum_{\ell\in V}\bar{d}_{\ell}}\right) = f_k\left(\frac{\bar{d}_k}{\sum_{\ell\in V}\bar{d}_{\ell}}\right)$.
		\end{enumerate}
		\item Define
		\begin{align*}
			\mathcal{B}(\bof)\triangleq \left\{
			\nq \in \Omega:\;
			\max_{j\in V}
			\left|
			f_j\left(\frac{\bar{d}_j}{\sum_{\ell\in V}\bar d_{\ell}}\right)
			-
			f_j(\bar{q}_j)
			\right|
			\leq
			4m
			\right\}\, .
		\end{align*}
		Denote $\bar{\mathcal{B}}(\bof)\triangleq \Omega\backslash \mathcal{B}(\bof)$.
		\begin{enumerate}
			\item  For any $K>K_1$, $\forall \nq \in \bar{\mathcal{B}}(\bof)$,
			\begin{align}
				&\alpha
				\left(
				\max_{j\in V}
				\left|
				f_j\left(\frac{\bar{d}_j}{\sum_{\ell\in V}\bar d_{\ell}}\right)
				-
				f_j(\bar{q}_j)
				\right|
				-
				2 m
				\right)^+ \nonumber\\
				\geq &
				\frac{1}{2 \tK}
				\max_{j\in V} \max_{\bar{q}\in\left[\bar{q}_{j}[t]-\frac{1}{\tK},\bar{q}_{j}[t]+\frac{1}{\tK}\right]}f'_j(\bar{q})
				+
				\mathds{1}\{q_j=0\textup{ or }d_j,\exists j\}
				\, .\label{eq:cond-eq-1}
			\end{align}
			\item Let $F(\nq)$ be the antiderivative of $\bof(\nq)\triangleq (f_j(\bar{q}_j))_{j\in V}$, we have $\sup_{\bq,\bq'\in\Omega}(F(\nq) - F(\nq')) \leq M_1$.
			\item
			We have
			$
			\sup_{\nq \in\mathcal{B}(\bof)} \max_{j\in V} \max_{\bar{q}\in\left[\bar{q}_{j}[t]-\frac{1}{\tK},\bar{q}_{j}[t]+\frac{1}{\tK}\right]}f'_j(\bar{q})
			\leq
			M_2.
			$
			\item If $\exists j\in V$ such that $q_j=0$ or $q_j=d_j$, then $\nq \in  \bar{\mathcal{B}}(\bof)$.
		\end{enumerate}
	\end{enumerate}
\end{cond}

\begin{proof}[Proof of Lemma \ref{lem:rhs-bound-general}]
	For $\nq \in \mathcal{B}(\bof)$, using Condition \ref{cond:mirror_map} point 2(d), we have $\cV_4=0$. By weak duality of SPP($\bphi^t$) we have $\cV_3\leq 0$. As a result, it follows from Condition \ref{cond:mirror_map} point 2(c), that
	\begin{align*}
		\cV_2 + \cV_3 + \cV_4\leq \
		&\frac{1}{2 \tK}\cdot \sup_{\nq \in \mathcal{B}(\bof)}
		\max_{j\in V} \max_{\bar{q}\in\left[\bar{q}_{j}[t]-\frac{1}{\tK},\bar{q}_{j}[t]+\frac{1}{\tK}\right]}f'_j(\bar{q})
		= M_2\cdot\frac{1}{\tK}\, .
	\end{align*}
	
	For $\nq \in \bar{\mathcal{B}}(\bof)$.
	We aim to use the inequality \eqref{eq:cond-eq-1} to prove the desired result. To do so, we first show that
	\begin{align*}
		\cV_3\leq\ -\alpha(\bphi^t)\cdot \left(\max_{j\in V}\left|f_j\left(\frac{\bar{d}_j}{\sum_{\ell\in V}\bar d_{\ell}}\right)
		-
		f_j(\bar{q}_j)\right| - 2m \right)^+\, .
	\end{align*}
	It was proved in Lemma \ref{lem:dual_subopt} that
	\begin{align*}
		\cV_3 \leq\
		-\alpha(\bphi^t)\cdot \left[\max_{j\in V}f_j (\bar{q}_j) - \min_{j\in V}f_j(\bar{q}_j) - 2 m\right]^+\, .
	\end{align*}
	Note that
	\begin{align*}
		&\max_{j\in V}\left|f_j\left(\frac{\bar{d}_j}{\sum_{\ell\in V}\bar d_{\ell}}\right)
		-
		f_j(\bar{q}_j)\right| \\
		\leq\ & \max\left\{\max_{j\in V}f_j(\bar{q}_j)
		-
		\min_{j\in V}f_j\left(\frac{\bar{d}_j}{\sum_{\ell\in V}\bar d_{\ell}}\right),
		\max_{j\in V}f_j\left(\frac{\bar{d}_j}{\sum_{\ell\in V}\bar d_{\ell}}\right)
		-
		\min_{j\in V} f_j(\bar{q}_j)
		\right\}\, .
	\end{align*}
	Note that there must exist $j^*\in V$ such that $\bar{q}_{j^*}\leq \frac{\bar{d}_{j^*}}{\sum_{\ell\in V}\bar{d}_{\ell}}$, hence $f_{j^*}(\bar{q}_{j^*}) \leq f_j\left(\frac{\bar{d}_{j^*}}{\sum_{\ell\in V}\bar{d}_{\ell}}\right)$.
	Because the congestion functions satisfy Condition \ref{cond:mirror_map} point 1(b), we have $f_j\left(\frac{\bar{d}_j}{\sum_{\ell\in V}\bar d_{\ell}}\right)$ has the same value for all $j\in V$, therefore
	\begin{align*}
		\max_{j\in V}f_j(\bar{q}_j)
		-
		\min_{j\in V}f_j\left(\frac{\bar{d}_j}{\sum_{\ell\in V}\bar d_{\ell}}\right)
		=\ &
		\max_{j\in V}f_j(\bar{q}_j) - f_{j^*}\left(\frac{\bar{d}_{j^*}}{\sum_{\ell\in V}\bar d_{\ell}}\right)\\
		\leq\ &\max_{j\in V}f_j(\bar{q}_j) - f_{j^*}(\bar{q}_{j^*})\\
		\leq\ &\max_{j\in V}f_j(\bar{q}_j) - \min_{j\in V}f_j(\bar{q}_j)\, .
	\end{align*}
	Similarly, we can show that
	\begin{align*}
		\max_{j\in V}f_j\left(\frac{\bar{d}_j}{\sum_{\ell\in V}\bar d_{\ell}}\right)
		-
		\min_{j\in V} f_j(\bar{q}_j)\leq\
		\max_{j\in V}f_j(\bar{q}_j) - \min_{j\in V}f_j(\bar{q}_j)\, .
	\end{align*}
	Combined, we have
	\begin{align*}
		\cV_3\leq\ -\alpha(\bphi^t)\cdot \left(\max_{j\in V}\left|f_j\left(\frac{\bar{d}_j}{\sum_{\ell\in V}\bar d_{\ell}}\right)
		-
		f_j(\bar{q}_j)\right| - 2m \right)^+\, .
	\end{align*}
	Plugging in Condition \ref{cond:mirror_map} point 2(a), we have for $\nq \in \bar{\mathcal{B}}(\bof)$,
	\begin{align*}
		\cV_2 + \cV_3 + \cV_4 \leq\ 0\, .
	\end{align*}
	
	Combine the above two cases, we conclude the proof.
\end{proof}

\subsubsection{Proof of Lemma \ref{lem:bound_rhs}}
We prove the result below that generalizes Lemma \ref{lem:bound_rhs} to the setting of finite buffer queues (Section \ref{subsec:extend-barrier}). By setting $V_b=\emptyset$ in Lemma \ref{lem:bound_rhs_jea}, we recover Lemma \ref{lem:bound_rhs}.

\begin{lem}\label{lem:bound_rhs_jea}
	Consider the congestion function \eqref{eq:inv_sqrt_mirror_map_gen}.
	Consider a set  $V$ of $m=|V|>1$ nodes, a subset $\Vb\subset V$ of buffer-constrained nodes with scaled buffer sizes $\bar{d}_j\in(0,1)$ (recall that we define $\bar{d}_j\triangleq 1$ for all $j\in V\backslash \Vb$) satisfying $\sum_{j\in V}\bar{d}_j>1$, and any $\{(\bphi^t,\cP,\cD)\}$ that satisfies Condition  \ref{cond:str_connect_jea}.
	Recall that $\alpha_{\min}=\min_{0\leq t\leq T}\alpha(\bphi^t)>0$.
	Then there exists $K_1 = \textup{poly}\left(m,\bar{\bd},\frac{1}{\alpha_{\min}}\right )$ such that for $K\geq K_1$,
	\begin{align*}
		\cV_2 + \cV_3 + \cV_4 \leq  M_2 \frac{1}{\tK}\, , \qquad \textup{for}\ M_2 = C\frac{\sqrt{m}}{\min_{j\in V}\bar{d}_j}\left(\frac{\sum_{j\in V}\bar{d}_j}{\min\{\sum_{j\in V}\bar{d}_j-1,1\}}\right)^{3/2} \, ,
	\end{align*}
	where $\cV_2,\cV_3,\cV_4$ were defined in \eqref{eq:one-step-lyap-finite-buffer}, $\tK$ was defined in \eqref{eq:normalized-queue-general}, and $C>0$ is a universal constant that is independent of $m$, $\bar{\bd}$, $K$, or $\alpha(\bphi^t)$.
\end{lem}
%
To prove Lemma \ref{lem:bound_rhs_jea}, it remains to be shown that the congestion function \eqref{eq:inv_sqrt_mirror_map_gen} satisfies Condition \ref{cond:mirror_map}.
\begin{lem}\label{lem:inv-sqrt-satisfy-cond}
	The congestion function \eqref{eq:inv_sqrt_mirror_map_gen} satisfies the growth conditions (Condition~\ref{cond:mirror_map}) with parameters $(\alpha_{\min},K_1,M_1,M_2)$ where
	$$
	K_1 = \textup{poly}\left(m,\bar{\bd},\frac{1}{\alpha_{\min}}\right)\, ,\quad  M_1=C m\, ,\quad M_2=C \frac{1}{\min_{j\in V}\bar{d}_j}\left(\frac{\sum_{j\in V}\bar{d}_j}{\min\{\sum_{j\in V}\bar{d}_j-1,1\}}\right)^{3/2}
	\sqrt{m}\, .
	$$
	Here $C$ is a universal constant that is independent of $m$, $\bar{\bd}$, $K$ and $\alpha_{\min}$.
\end{lem}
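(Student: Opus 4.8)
The plan is to verify each of the four points of Condition~\ref{cond:mirror_map} in turn for the congestion function \eqref{eq:inv_sqrt_mirror_map_gen}, treating buffer-free nodes ($j \notin \Vb$) as the formal limit $\bar{d}_j = 1$ of the buffer-constrained formula. The main structural observation is that, writing $\hb(u) \triangleq (1-u)^{-1/2} - u^{-1/2}$ and $h(u) \triangleq -u^{-1/2}$, we have $f_j(\bar{q}_j) = \sqrt{m}\,\Cb\big(\hb(\bar{q}_j/\bar{d}_j) - \Db\big)$ for $j \in \Vb$ and $f_j(\bar{q}_j) = \sqrt{m}\, h(\bar{q}_j/\bar{d}_j)$ for $j \notin \Vb$ (using $\bar{d}_j=1$), and the normalizing constants $\Cb, \Db$ in the footnote are precisely chosen so that (i) $f_j(\epsilon \bar d_j) = f_k(\epsilon \bar d_k)$ for all $j,k$ (the ``all queues empty'' alignment, since $q_j = 0 \Leftrightarrow \bar q_j/\bar d_j = \delta_K/\tK = \epsilon$), and (ii) $f_j\big(\frac{\bar d_j}{\sum \bar d_i}\big)$ is the same for all $j$. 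Properties (i) and (ii) are exactly Condition~\ref{cond:mirror_map}~1(a) (for the empty-queue case; the full-buffer case $q_k = d_k \Leftrightarrow \bar q_k/\bar d_k = 1 - \epsilon$ follows because $\hb$ is increasing and one checks the full-buffer congestion cost exceeds every other congestion cost) and 1(b).

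Next I would handle point 2(b) and 2(c), which are direct calculus estimates. The antiderivative of $f_j$ has the form $F(\nq) = \sum_j \big[\text{const} \cdot \bar d_j \big(\sqrt{1-\bar q_j/\bar d_j} \cdot \Cb^{-1}\text{-ish term} \ldots\big)\big]$; more simply, $F$ is a sum of terms each bounded in magnitude by $O(\sqrt{m}\,\bar d_j)$ times an $O(1)$ quantity on $\Omega$ (after using $\epsilon \le \bar q_j/\bar d_j \le 1-\epsilon$ and that the worst case is at the endpoints), giving $\sup_{\nq,\nq' \in \Omega}(F(\nq)-F(\nq')) \le Cm$ since $\sum_j \bar d_j = O(m)$ — wait, more carefully $F$ is a sum over $j$ and $\sqrt{m}$ appears in each term so naively this is $m^{3/2}$; the point is that the dominant contribution comes from the few near-boundary queues, and on the ``good'' set the derivative bound controls the rest, so one argues $M_1 = Cm$ by splitting into the convex-function argument used in the proof of Theorem~\ref{thm:main_regret} (where $F(\frac1m\mathbf 1)$ was the minimizer). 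For 2(c): on $\mathcal{B}(\bof)$ the condition $|f_j(\bar d_j/\sum \bar d_i) - f_j(\bar q_j)| \le 4m$ forces $\bar q_j/\bar d_j$ to be bounded away from $0$ and $1$ by a gap of order $\big(\frac{\min\{\sum \bar d_i - 1, 1\}}{\sum \bar d_i}\big)$ (inverting the $\hb$-type inequality $|\hb(u) - \hb(\bar d_j/\sum \bar d_i)| \le 4m/(\sqrt m \Cb) = O(\sqrt m)$ and using $\Cb = \Theta\big((\min\{\sum\bar d_i - 1,1\}/\sum\bar d_i)^{3/2}\big)$), on which interval $f_j'(\bar q_j) = \sqrt m\,\Cb\, \bar d_j^{-1}\, \hb'(\bar q_j/\bar d_j)$ is bounded by $M_2 = C\frac{\sqrt m}{\min_j \bar d_j}\big(\frac{\sum \bar d_i}{\min\{\sum \bar d_i - 1, 1\}}\big)^{3/2}$. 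Point 2(d) is immediate: if some $q_j \in \{0, d_j\}$ then $|f_j(\bar d_j/\sum \bar d_i) - f_j(\bar q_j)| \to \infty$ as $K \to \infty$ (it is $\Omega(\sqrt m \Cb \epsilon^{-1/2}) = \Omega(\sqrt m \Cb K^{1/4}/\sqrt{\tK})$, say), which exceeds $4m$ for $K \ge K_1$.

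The genuinely delicate point is 2(a): the inequality \eqref{eq:cond-eq-1} on the complement $\bar{\mathcal{B}}(\bof)$, which demands that the dual-suboptimality gain on the left dominate the variance term $\frac{1}{2\tK}\max_j f_j'(\bar q_j)$ plus the underflow indicator. The plan here is to let $u_j \triangleq \bar q_j/\bar d_j$, let $j_{\min} = \arg\min_j u_j$, and observe that on $\bar{\mathcal{B}}(\bof)$ we have $\max_j |f_j(\bar d_j/\sum \bar d_i) - f_j(\bar q_j)| > 4m$, so the left side is at least $\alpha_{\min} \cdot 2m > 0$ and in fact grows like $\alpha_{\min} \sqrt m \Cb\, u_{\min}^{-1/2}$ when $u_{\min}$ is small (or like $\alpha_{\min}\sqrt m \Cb (1-u_{\max})^{-1/2}$ when some queue is near-full). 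The right side is $\frac{\sqrt m \Cb}{2\tK}\max_j \bar d_j^{-1} \hb'(u_j)$, and $\hb'(u) = \Theta(u^{-3/2} + (1-u)^{-3/2})$; since $u_j \ge \epsilon = \delta_K/\tK = \Theta(1/\sqrt K)$, the worst case $\max_j \hb'(u_j) = O(\epsilon^{-3/2}) = O(\tK^{3/2}/K^{3/4}\cdot\ldots)$, and $\frac{1}{\tK}\cdot \epsilon^{-3/2} = \tK^{1/2}/\delta_K^{3/2} = \tK^{1/2}/K^{3/4} = \Theta(K^{-1/4})$, which $\to 0$. Meanwhile the worst case for the underflow indicator is $1$, and the worst case for the left side when underflow is active is $u_{\min} = \epsilon$, giving a lower bound $\Omega(\alpha_{\min}\sqrt m \Cb \epsilon^{-1/2}) = \Omega(\alpha_{\min}\sqrt m \Cb K^{1/4})$, which beats $1$ for $K \ge K_1 = \mathrm{poly}(m, \bar{\bd}, 1/\alpha_{\min})$. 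The care needed is in the intermediate regime where $u_{\min}$ is small but not as small as $\epsilon$: there one matches $u_{\min}^{-1/2}$ (left) against $\tK^{-1}u_{\min}^{-3/2}$ (right), i.e. one needs $\alpha_{\min}\sqrt m\,\Cb\, u_{\min}^{-1/2} \ge \frac{C\sqrt m \Cb}{\tK \min_j \bar d_j} u_{\min}^{-3/2}$, equivalently $u_{\min} \ge C/(\alpha_{\min}\tK \min_j \bar d_j)$ — which holds automatically since on $\bar{\mathcal B}(\bof) \setminus \{\text{underflow}\}$ we have $u_{\min} \ge \epsilon = \delta_K/\tK$ and $\delta_K = \sqrt K$ dominates $C/(\alpha_{\min}\min_j\bar d_j)$ for large $K$. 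Assembling all four points with $K_1$ taken to be the max of the thresholds arising in each, and reading off $M_1, M_2$ from 2(b), 2(c), completes the proof; Lemma~\ref{lem:bound_rhs_jea} (and hence Lemma~\ref{lem:bound_rhs}) then follows by combining Lemma~\ref{lem:inv-sqrt-satisfy-cond} with Lemma~\ref{lem:rhs-bound-general}.
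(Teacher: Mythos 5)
Your proposal follows essentially the same route as the paper's proof: you verify the points of Condition~\ref{cond:mirror_map} one by one, with the same case split for point 2(a) (states with an empty/full queue, where the left side grows like $\alpha_{\min}\sqrt{m}\,\Cb\,\epsilon^{-1/2}=\Omega(K^{1/4})$ and beats the indicator, versus interior states of $\bar{\mathcal{B}}(\bof)$, where one matches $u_{\min}^{-1/2}$ against $\tK^{-1}u_{\min}^{-3/2}$), the same localization $\bar q_j=\Theta(\bar d_j/\dsigma)$ on $\mathcal{B}(\bof)$ for point 2(c)/2(d), and the same assembly of $K_1$ as the maximum of the per-point thresholds.

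Two corrections. First, you assert $\Cb=\Theta\bigl((\dgap/\dsigma)^{3/2}\bigr)$; in fact $\Cb=\frac{h(\epsilon)-h(1/\dsigma)}{\hb(\epsilon)-\hb(1/\dsigma)}\to 1$ as $\epsilon=\delta_K/\tK\to 0$, since numerator and denominator are both dominated by $-\epsilon^{-1/2}$, and the paper uses exactly $\Cb=\Theta(1)$ for $K=\Omega(\dsigma^2/\dgap^2)$. Your claim is also inconsistent with your own step $4m/(\sqrt{m}\,\Cb)=O(\sqrt{m})$, which requires $\Cb=\Omega(1)$. The final value of $M_2$ is unaffected because the factor $(\dsigma/\dgap)^{3/2}$ actually comes from $\hb'(u_j)=\Theta\bigl(u_j^{-3/2}+(1-u_j)^{-3/2}\bigr)$ evaluated at $u_j$ near $1/\dsigma$ (where $1-u_j\gtrsim \dgap/\dsigma$), not from $\Cb$. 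Second, your treatment of $M_1$ is left unresolved (you raise the $m^{3/2}$ worry and gesture at the convexity argument from Theorem~\ref{thm:main_regret}, which only covers the unbuffered case). The paper instead computes the antiderivative explicitly, $F(\nq)=-2\sqrt{m}\sum_j\Cb\sqrt{\bar d_j}\bigl(\sqrt{\bar q_j}+\sqrt{\bar d_j-\bar q_j}\bigr)+\mathrm{const}$ on $\Vb$ (and the analogous unbuffered term elsewhere), and bounds $\sup_{\nq,\nq'}(F(\nq)-F(\nq'))$ by $O\bigl(\sqrt{m}\max_{\nq'}\sum_j\sqrt{\bar d_j}(\sqrt{\bar q'_j}+\sqrt{\bar d_j-\bar q'_j})\bigr)$, which it then evaluates directly; you should carry out this computation rather than appeal to the unbuffered convexity argument.
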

We delay the proof of Lemma \ref{lem:inv-sqrt-satisfy-cond} to Appendix \ref{append:validate-growth-cond}.
We are now ready to prove Lemma \ref{lem:bound_rhs_jea}.

\begin{proof}[Proof of Lemma \ref{lem:bound_rhs_jea}]
	Lemma \ref{lem:bound_rhs_jea} immediately follows from Lemma \ref{lem:rhs-bound-general} and Lemma \ref{lem:inv-sqrt-satisfy-cond}.
\end{proof}

\subsubsection{Validating Condition \ref{cond:mirror_map} for Congestion Function \eqref{eq:inv_sqrt_mirror_map_gen}}\label{append:validate-growth-cond}
In this section, we prove Lemma \ref{lem:inv-sqrt-satisfy-cond}. 
%
Recall the congestion function defined in \eqref{eq:inv_sqrt_mirror_map_gen}: let $\Vb\subset V$ be the subset of buffer-constrained nodes with scaled buffer sizes $\bar{d}_j\in(0,1)$, and
\begin{align*}
	f_j(\bar{q}_j) &= \sqrt{m} \cdot
	\Cb\cdot\left(
	\left(1 - \frac{\bar{q}_j}{\bar{d}_j}\right)^{-\frac{1}{2}}
	-
	\left(\frac{\bar{q}_j}{\bar{d}_j}\right)^{-\frac{1}{2}}
	-
	\Db
	\right)\, , &\forall j\in \Vb\,  \\
	f_j(\bar{q}_j) &= -\sqrt{m} \cdot
	\bar{q}_j^{-\frac{1}{2}}\, , &\forall j\in V\backslash \Vb\,
\end{align*}
Here $\Cb$ and $\Db$ are normalizing constants chosen as follows.
Define $\epsilon\triangleq \frac{\delta_K}{\tK}$ (recall that $\delta_K\triangleq \sqrt{K}$ and $\tK \triangleq K + \left(\sum_{j\in V}\bar{d}_j\right)\delta_K$).
Let $\hb(\bar{q})\triangleq (1-\bar{q})^{-\frac{1}{2}} - {\bar{q}}^{-\frac{1}{2}}$ and
$h(\bar{q})\triangleq - {\bar{q}}^{-\frac{1}{2}}$.
Define $\Cb\triangleq \frac{h(\epsilon)-h(1/\sum_{j\in V}\bar{d}_j)}{\hb(\epsilon)-\hb(1/\sum_{j\in V}\bar{d}_j)}$
and $\Db \triangleq \hb(1/\sum_{j\in V}\bar{d}_j) - \Cb^{-1}h(1/\sum_{j\in V}\bar{d}_j)$.
These definitions ensure that Condition \ref{cond:mirror_map} point 1(b) holds, and are useful in establishing Condition \ref{cond:mirror_map} point 1(a).

\begin{proof}[Proof of Lemma \ref{lem:inv-sqrt-satisfy-cond}]
	The proof of this lemma involves a lot of notations and computation.
	For readability, we use the following simplifying notation (with a slight abuse of notation): for $x_a,y_a\in\mathbb{R}_+$ where $a\in\mathcal{A}\subset\mathbb{Z}_+$, $\{x_a\}=O(\{y_a\})$ ($\{x_a\}=\Omega(\{y_a\})$, resp.) means that there exists a universal constant $C>0$ that does not depend on $m,K,\bar{\bd}$, or $\alpha_{\min}$ such that $x_a \leq C y_a$ ($x \geq C y_a$, resp.) for each $a\in\mathcal{A}$. We say $\{x_a\}=\Theta(\{y_a\})$ if $\{x_a\}=O(\{y_a\})$ and $\{x_a\}=\Omega(\{y_a\})$.
	Denote $\dsigma\triangleq \sum_{j\in V}\bar{d}_j$, $\dgap\triangleq \min\{1,\sum_{j\in V}\bar{d}_j - 1\}$, $\dmin\triangleq \min_{j\in V}\bar{d}_j$.
	Recall that $\bar{d}_j\in(0,1)$ for any $j\in \Vb$, and that $\dsigma>1$.
	
	\begin{itemize}
		\item  Point 1. It is not hard to see that the congestion functions $(f_j(\bar{q}_j))_{j\in V}$ are strictly increasing and continuously differentiable.
		For any $K>0$, we have $f_j(\bar{q}_j)=f_k(\bar{q}_k)$ for any $j,k\in V$ if $q_j=q_k=0$. As a result, if $q_j=0$, we have $f_j(\bar{q}_j)\leq f_k(\bar{q}_k)$ for any $k\in V$.
		It can be easily verified that Point 1(b) is also satisfied by any $K>0$.
		
		It remains to be shown that there exists $K_1<\infty$ such that for $K\geq K_1$, we have $f_j(\bar{q}_j)\leq f_k(\bar{q}_k)$ for any $j\in V$ if $q_k=d_k$ and $k\in \Vb$.
		To this end, if suffices to check the inequality $f_j(\bar{q}_j)\leq f_k(\bar{q}_k)$ for $q_j=d_j$, $q_k=d_k$ where $j\in V\backslash\Vb$ and $k\in \Vb$: In this case, we have $f_j(\bar{q}_j)\leq 0$; for $K= \Omega(\max\{\dsigma^2,\frac{\dsigma^2}{\dgap^2}\})$, we have $\Cb=\Theta(1)$, $\Db=O(\sqrt{\frac{\dsigma}{\dgap}})$ hence $f_k(\bar{q}_k)= \Omega(\sqrt{m}\frac{K^{1/4}}{\dgap^{1/2}})\geq 0$. Therefore point 1 is satisfied for  $K_1 = \Omega(\max\{\dsigma^2,\frac{\dsigma^2}{\dgap^2}\})=\Omega(\frac{\dsigma^2}{\dgap^2})$.
		\item Point 2(a). For $\bq$ such that $\nq\in\bar{\mathcal{B}}(\bof)$ and $0<q_j<d_j$ for any $j\in V$, we have, by definition of $\nq\in\bar{\mathcal{B}}(\bof)$,
		\begin{align*}
			\textup{LHS of \eqref{eq:cond-eq-1}}\geq 2m\alpha\, .
		\end{align*}
		On the other hand, we have for $K = \Omega(\frac{\dsigma^2}{\dgap^2})$, we have $\Cb=\Theta(1)$ hence
		\begin{align*}
			\textup{RHS of \eqref{eq:cond-eq-1}}= O\left(\frac{1}{K}\cdot \sqrt{m}\cdot  K^{3/4}\cdot \dmin^{-1}\dgap^{-3/2}\right)
		\end{align*}
		Here the RHS of \eqref{eq:cond-eq-1} is maximized when $q_j=0$ or $q_j=d_j$.
		Therefore \eqref{eq:cond-eq-1} holds for $K\geq K_1 = \Omega\left(\max\left\{\frac{\dsigma^2}{\dgap^2},\frac{1}{m^2 \alpha^4 \dmin^4\dgap^6}\right\}\right)$.
		For $\bq$ such that $\nq\in\bar{\mathcal{B}}(\bof)$ and $q_j=0$ or $d_j$ for some $j'\in V$, we have
		\begin{align*}
			\textup{LHS of \eqref{eq:cond-eq-1}}\geq \alpha \sqrt{m}\cdot \Omega\left(K^{1/4} - \sqrt{\dsigma}\right)\, ,
		\end{align*}
		which is obtained by plugging in $q_{j'}$.
		For $K=\Omega(\frac{\dsigma^2}{\dgap^2})$, we also have
		\begin{align*}
			\textup{RHS of \eqref{eq:cond-eq-1}}= O\left(\frac{1}{K}\cdot \sqrt{m}\cdot K^{3/4}\cdot \dmin^{-1} \dgap^{-3/2} + 1\right)\, ,
		\end{align*}
		Using the analysis above, for $K\geq \Omega\left(\frac{m^2}{\dmin^4\dgap^6}\right)$, the first term in the parentheses is $O(1)$.
		In this case we have $\textup{RHS of \eqref{eq:cond-eq-1}}= O(1)$.
		Therefore \eqref{eq:cond-eq-1} holds for $$K=\Omega\left(\max\left\{\frac{1}{m^2 \alpha^4 \dmin^4\dgap^6},\frac{m^2}{\dmin^4\dgap^6},\frac{1}{\alpha^2 \dgap^3},\frac{\dsigma^2}{\dgap^2}\right\}\right)\, .$$
		
		Combined, \eqref{eq:cond-eq-1} holds for $K_1 =\Omega\left(\max\left\{\frac{1}{m^2 \alpha^4 \dmin^4\dgap^6},\frac{m^2}{\dmin^4\dgap^6},\frac{1}{\alpha^2 \dgap^3},\frac{\dsigma^2}{\dgap^2}\right\}\right)$.
		\item Point 2(b). Note that for $K=\Omega(\frac{\dsigma^2}{\dgap^2})$,
		\begin{align*}
			&\sup_{\bq,\bq'\in \Omega^{K}}\left(
			F(\nq) - F(\nq')
			\right) \\
			\leq \ & \max\{\Cb,1\}\cdot O\left(\sqrt{m}\sup_{\bq,\bq'\in \Omega^{K}}\left(
			\sum_{j\in V}\sqrt{\bar{d}_j}\left(-\sqrt{\bar{q}_{j}}
			-
			\sqrt{\bar{d}_j-\bar{q}_{j}}
			+
			\sqrt{\bar{q}'_{j}}
			+
			\sqrt{\bar{d}_j-\bar{q}'_{j}}
			\right)
			\right)
			\right) \\
			\leq\ & O\left(\sqrt{m} \max_{\bq'\in \Omega^K}\sum_{j\in V}\sqrt{\bar{d}_j}\left(
			\sqrt{\bar{q}'_{j}}
			+
			\sqrt{\bar{d}_j-\bar{q}'_{j}}
			\right)
			\right)\\
			=\ & O(m)\, .
		\end{align*}
		Hence
		$
			M_1 = O(m)\, .
		$
		\item Point 2(c). Note that for $\nq\in\mathcal{B}_{\bof}$, we have $\bar{q}_j =\Theta\left(\frac{\bar{d}_j}{\dsigma}\right)$, hence
		\begin{align*}
			M_2 = \max_{\nq\in\mathcal{B}_{\bof}}\max_{j\in V} \max_{\bar{q}\in\left[\bar{q}_{j}[t]-\frac{1}{\tK},\bar{q}_{j}[t]+\frac{1}{\tK}\right]}f'_j(\bar{q})
			\leq
			\frac{1}{\dmin}\left(\frac{\dsigma}{\dgap}\right)^{3/2}
			O(\sqrt{m})\, .
		\end{align*}
		For the special case where $\Vb=\emptyset$ hence $\bar{d}_j=1$ for all $j\in V$, we have $\dsigma=m$, $\dmin=1$, $\dgap=1$ and $M_2 = O(m^2)$.
		\item Point 2(d). Note that for $\nq\in\mathcal{B}_{\bof}$, we have $\bar{q}_j =\Theta\left(\frac{\bar{d}_j}{\dsigma}\right)$, hence point 2(d) holds.
	\end{itemize}
\end{proof}

\section{Bounding the Variation Gap: Proofs in Section \ref{sec:bound-variation-gap}} \label{append:proof-main-theorems}
\yka{I deleted the assertions that Theorem 1 is proved here. That theorem was already proved in the main paper, and our job was only to prove the three lemmas.}

\begin{proof}[Proof of Proposition \ref{lem:SPPavg-vs-avgofSPPt}]
	\emph{Step 1 (Flow decomposition of $\textup{SPP}(\bar{\bphi)}$).}
	Let $\widebar{\bz}$ be an optimal solution to $\textup{SPP}(\bar{\bphi)}$. 
	For the $t$'s such that $\widebar{\bz}$ is feasible for $\textup{SPP}(\bphi^t)$ we are done. 
	Now consider any $t$ such that $\widebar{\bz}$ is \emph{not} feasible for $\textup{SPP}(\bphi^t)$. 
	Using the standard flow decomposition approach \citep[see, e.g.,][the interested reader can also find the flow decomposition argument in the proof of Lemma~\ref{lem:flow-decomp-jea-z} above]{williamson2019network}, the flow $\widebar{\bz}$ can be decomposed into flows along directed cycles, since it satisfies the flow balance constraints \eqref{eq:fluid_linear_flow_bal-z}:
	directed cycles $\cC$  carrying flow $f_{\cC} > 0$ in the decomposition take the form $\cC= \big((j_1,\tau_1,j_2),(j_2,\tau_2, j_3),\cdots,(j_{s},\tau_s, j_{s+1}=j_1)\big)$ where the nodes $j_1, j_2, \dots, j_s$ are distinct from each other, and for each $r=1, 2, \dots, s$, there is a flow from $j_r$ to $j_{r+1}$ due to demand type $\tau_r$. We have
	\begin{align}
		\widebar{z}_{j\tau k} = \sum_{\cC \ni (j, \tau, k)} f_{\cC} \qquad \textup{for all } \tau \in \cT, j \in \cP(\tau), k \in \cD(\tau) \, .
		\label{eq:cycle-decomp}
	\end{align}
	(Note that the number of cycles in the decomposition is bounded above by $\sum_{\tau \in \cT} |\cP(\tau)| |\cD(\tau)|$, but our argument will not be affected by the number of cycles. In fact our argument can handle an infinity of demand types by replacing sums with integrals.)
	
	\medskip
	\emph{Step 2 (Obtain a feasible flow of $\textup{SPP}(\bphi^t)$).}
	Starting from the flow $\widebar{\bz}$ and the associated cycle decomposition \eqref{eq:cycle-decomp}, we reduce the flows $(f_{\cC})$ along the cycles via the following iterative process, in order to obtain $\bz^t$ which is feasible for the problem $\textup{SPP}(\bphi^t)$:
	
	Consider each demand type $\tau \in \cT$ in turn and do the following. Define the (current) arrival rate violation as
	$$\delta_\tau \triangleq \left ( \sum_{\cC} f_{\cC} \cdot \textup{count}(\cC, \tau) \ - \  \phi^{t}_{\tau} \right )^+ \, $$
	where $\textup{count}(\cC, \tau)$ is the number of times demand type $\tau$ appears in cycle $\cC$.  If $\delta_{\tau} = 0$ do nothing. If $ \delta_{\tau} > 0$, reduce the flows in cycles containing $\tau$ sufficiently that after the reduction $\sum_{\cC} f_{\cC} \cdot \textup{count}(\cC, \tau) = \phi^{t}_{\tau}$ holds  (the reduction can be divided arbitrarily between the different cycles containing $\tau$; subject to the constraints that no cycle-flow should increase and no cycle-flow should go below zero). Note that the payoff loss resulting from this reduction is bounded above by $\delta_{\tau} m$ since each cycle length is at most $m$ (since no node is repeated in a cycle), the $w$s are assumed to be bounded by 1, and the total reduction in cycle flows is at most $\delta_{\tau}$.
	This simple process maintains the following properties:
	\begin{compactitem}[leftmargin=*]
		\item The flow balance constraint \eqref{eq:fluid_linear_flow_bal-z} is satisfied throughout.
		\item Cycle-flows are non-increasing during the process. Cycle-flows never drop below zero.
		\item For all demand types which have already been processed so far, the arrival rate constraint is satisfied. Formally: During the process, denote the current value of the right-hand side of \eqref{eq:cycle-decomp} by $z_{j \tau k}$. Then $\sum_{j \in \cP(\tau), k \in \cD(\tau)}z_{j \tau k} =  \sum_{\cC} f_{\cC} \cdot \textup{count}(\cC, \tau) \leq \phi^{t}_{\tau}$ for all demand types $\tau $ which have already been processed.
	\end{compactitem}
	
	In particular, at the end of the process, we arrive at flows $\bz^t$ which are feasible for $\textup{SPP}(\bphi^t)$. 
	
	\medskip
	\emph{Step 3 (Bound the payoff loss).}
	It remains to show that the payoff lost due to the reduction in flows is bounded by $m\eta_T$.
	
	Since flows are non-increasing and the initial flows are feasible for $\textup{SPP}(\bar{\bphi})$, we have that $\delta_\tau  \leq \left ( \widebar{\phi}_{ \tau} - \phi^{t}_{\tau} \right )_+$ for all $\tau \in \cT$. Since the payoff lost while processing demand type $\tau$ is bounded above by $\delta_\tau m$ (as we argued above), the total loss in payoff lost is then bounded above by
	\begin{align*}
		W^{\textup{SPP}(\bar{\bphi)}}-
		W^{\textup{SPP}(\bphi^t)}
		\leq
		m \sum_{\tau \in \cT} \delta_\tau 
		\leq m \sum_{\tau \in \cT} \left ( \widebar{\phi}_{ \tau} - \phi^{t}_{\tau} \right )^+ 
		\leq m \lVert \bphi^{t} - \widebar{\bphi}\rVert_1 \, .
	\end{align*}
	We further bound the RHS from above by:
	\begin{align*}
		m\lVert \bphi^{t} - \widebar{\bphi}\rVert_1
		= 
		m\left\lVert \frac{1}{T}\sum_{s=1}^{T} (\bphi^{t} - \bphi^{s})\right\rVert_1
		\leq
		m\frac{1}{T}\sum_{s=1}^{T}\lVert  \bphi^{t} - \bphi^{s}\rVert_1
		\leq
		m\frac{1}{T}\cdot T \cdot (\eta T)
		=
		m\eta T\, ,
	\end{align*}
	where the last inequality follows from the definition of \emph{$\eta$-slowly varying demand} (Definition \ref{defn:variation-budget}).
	Finally, we sum over $t$ and obtain that
	\begin{align}
		W^{\textup{SPP}(\bar{\bphi)}}
		-
		\frac{1}{T}\sum_{t=1}^{T} W^{\textup{SPP}(\bphi^t)} 
		\leq   m \eta T \, .
	\end{align}
	The proposition follows.
\end{proof}


\section{Proof of Theorem \ref{thm:main_regret}}\label{append:proof-main-regret}

In this section, we state and prove the following Theorem.
\begin{thm}[General result for finite-buffer setting]\label{thm:general_mirror_map}
	Consider a set $V$ of $m \! \triangleq \! |V|>1$ nodes, a subset $\Vb \subseteq V$ of buffer-constrained nodes with scaled buffer sizes $\bar{d}_j \in (0,1) \ \forall j \in \Vb$ satisfying $\sum_{j\in V}\bar{d}_j>1$.
	Recall that $\alpha_{\min}=\min_{0\leq t\leq T}\alpha(\bphi^t)>0$. Consider any congestion functions $(f_j(\cdot))_{j \in V}$ that satisfy Condition \ref{cond:mirror_map} with parameters $(\alpha = \alpha_{\min},K_1, M_1, M_2) \in \mathbb{R}_{++}^4$ for any $0\leq t\leq T$. 
	Then for any horizon $T$, any $K\geq K_1$, and any sequence of demand arrival rates $(\bphi^t)_{t=0}^{T-1}$ which varies $\eta$-slowly (for some $\eta \in [0,2]$) and pickup and dropoff neighborhoods $\cP$ and $\cD$ such that $\{(\bphi^t, \cP, \cD)\}_{t\leq T}$ satisfy Condition~\ref{cond:str_connect_jea}, we have
	\begin{align*}
		L^{\textup{MBP}}_T\leq
		(4\sqrt{M_1 m} + 2M_1)\cdot \left( \frac{K}{T} + \sqrt{\eta K} \right )
		+
		M_2
		\frac{1}{K}\, .
	\end{align*}
\end{thm}

Note that Theorem \ref{thm:general_mirror_map} generalizes both Theorem \ref{thm:main_regret} and Theorem \ref{thm:main-result-finite-buffer}. 
We will only prove Theorem \ref{thm:general_mirror_map}, and Theorems \ref{thm:main_regret} and \ref{thm:main-result-finite-buffer} will follow as corollaries since the congestion function \eqref{eq:inv_sqrt_mirror_map_gen} satisfies Condition \ref{cond:mirror_map} (Lemma \ref{lem:inv-sqrt-satisfy-cond}).

\begin{proof}[Proof of Theorem~\ref{thm:general_mirror_map}]
	
	
	First note that the following claim analogous to Proposition \ref{prop:main_regret_DeltaT} holds for the setting stated in this theorem. The claim can be proved by simply repeating the proof of Proposition \ref{prop:main_regret_DeltaT} and replacing Lemma \ref{lem:bound_rhs} by Lemma \ref{lem:rhs-bound-general}.
	
	\medskip
	\emph{Claim.} Consider the setting in Theorem \ref{thm:general_mirror_map}. Then for any congestion function $\bof(\cdot)$ that satisfy Condition \ref{cond:mirror_map} with parameters $(\alpha,K_1,M_1,M_2)$, the following result holds.
	For any  $K\geq K_1$, and any $0<\Delta_T<T$ the following guarantees hold for Algorithm~\ref{alg:mirror_bp}
	\begin{align}\label{eq:LT-bound-general}
		L^{\textup{MBP}}_T\leq
		M_1 \frac{K}{\Delta_T} + M_2\frac{1}{K}
		+
		\Delta_T m\eta\, .
	\end{align}
	
	\medskip
	It remains to choose $\Delta_T$ appropriately, i.e., to divide the horizon $T$ into intervals of appropriate length. Note that the bound on per period loss \eqref{eq:LT-bound-general} is minimized for $\Delta_T = T_*  = \sqrt{(M_1/m) (K/\eta)}$, which makes the first and third terms equal. This observation will guide our choice of $\Delta_T$.
	
	If $T \leq T_*$, we set $\Delta_T = T$ and we immediately have
	\begin{align}
		L^{\textup{MBP}}_{T} \leq  \frac{K}{T}  2 M_1 + M_2 \frac{1}{K} \qquad \forall T< T_* \, ,
		\label{eq:LT-bound-small-T}
	\end{align}
	since the first term is larger than the third term in \eqref{eq:LT-bound-general}. If $T > T_*$ then we divide $T$ into $\lceil T/T_* \rceil$ intervals of equal length (up to rounding error). In particular, each interval has length $\Delta_T \in [T_*/2,T_*]$, the first term is again larger than the third term in \eqref{eq:LT-bound} and so the per period loss in each interval  is bounded above by
	$$\frac{K}{\Delta_T}  2M_1 + M_2 \frac{1}{K} \leq \frac{K}{T_*/2}  2M_1 + M_2 \frac{1}{K} = \sqrt{\eta K}  4 \sqrt{M_1 m} + M_2 \frac{1}{K}\, .$$
	Since this bound holds for each interval, it holds for the full horizon of length $T$, i.e.,
	\begin{align}
		L^{\textup{MBP}}_{T} \leq  \sqrt{\eta K}  4 \sqrt{M_1 m} + M_2 \frac{1}{K} \qquad \forall T\geq T_* \, .
		\label{eq:LT-bound-large-T}
	\end{align}
	Combining \eqref{eq:LT-bound-small-T} and \eqref{eq:LT-bound-large-T}, we obtain that for any $K \geq K_1 \triangleq  \max(K_2, K_3)$ and any horizon $T$, we have
	$$L^{\textup{MBP}}_{T} 
	\leq  4\sqrt{\eta K} \sqrt{M_1 m} + M_2 \frac{1}{K } + \frac{K}{T}  2M_1 
	\leq (4\sqrt{M_1 m} + 2M_1) \left(\frac{K}{T} + \sqrt{\eta K}\right ) +  M_2 \frac{1}{K }\, .$$ 
	We obtain the bound claimed in the theorem.
\end{proof}

\begin{proof}[Proof of Theorem \ref{thm:main-result-finite-buffer}]
	In Lemma \ref{lem:inv-sqrt-satisfy-cond} we show that the congestion function \eqref{eq:inv_sqrt_mirror_map_gen} satisfies the growth conditions (Condition~\ref{cond:mirror_map}) with parameters $(\alpha_{\min},K_1,M_1,M_2)$ where
	$$
	K_1 = \textup{poly}\left(m,\bar{\bd},\frac{1}{\alpha_{\min}}\right)\, ,\quad  M_1=C m\, ,\quad M_2=C \frac{1}{\min_{j\in V}\bar{d}_j}\left(\frac{\sum_{j\in V}\bar{d}_j}{\min\{\sum_{j\in V}\bar{d}_j-1,1\}}\right)^{3/2}
	\sqrt{m}\, .
	$$
	Here $C$ is a universal constant that is independent of $m$, $\bar{\bd}$, $K$ and $\alpha_{\min}$.
	Plugging in the above constants to Theorem \ref{thm:general_mirror_map}, we obtain the desired result.
\end{proof}

\begin{proof}[Proof of Theorem \ref{thm:main_regret}]
	Setting $V_b=\emptyset$ in Theorem \ref{thm:main-result-finite-buffer}, and recall that $\bar{d}_j=1$ for $j\in V\backslash V_b$, we obtain the desired result.
\end{proof}



\pqa{End here. And one para on the special case.}

\section{Results for the Joint Pricing-Assignment Setting}\label{appen:jpa}
In this section, we outline some of the key lemmas and propositions that lead to a proof of Theorem \ref{thm:main_regret_jpa}.
Since most of the proofs are almost identical to their JEA counterpart, we omit most of the proofs or only provide an outline if necessary to avoid repetition.

\subsection{The Static Planning Problem for JPA} 
The static planning problem (SPP) in the JPA setting is
\begin{align}
	\textup{maximize}_{\bx}\
	&\sum_{\tau \in \cT}\
	\phi_{\tau}
	\left(
	r_{\tau}\left(\sum_{j\in \cP(\tau),k\in\cD(\tau)}x_{j\tau k}\right)
	-
	\sum_{j\in \cP(\tau),k\in\cD(\tau)}c_{j\tau k}\cdot x_{j\tau k}
	\right) \label{eq:fluid-jpa-obj}\\
	\textup{s.t.}\ &
	\sum_{\tau \in \cT}\phi_{\tau}\sum_{j\in \cP(\tau),k\in\cD(\tau)}x_{j\tau k}(\boe_j - \boe_k) = \bzero
	\hspace{4.5cm}
	\textup{(flow balance)} \label{eq:fluid-jpa-flow-bal}\\
	&\sum_{j\in \cP(\tau),k\in\cD(\tau)}x_{j\tau k}\leq 1\, ,\ x_{j\tau k}\geq 0\quad \forall j,k\in V\, ,\ \tau \in \cT
	\hspace{1.2cm}
	\textup{(demand constraint)}\, .\label{eq:fluid-jpa-dmd-constr}
\end{align}

We have the following result that is analogous to Proposition \ref{prop:fluid_ub}.

\begin{prop}\label{prop:fluid_upper_bound_jpa}
	For any horizon $T< \infty$, any $K$ and any starting state $\bq[0]$, the finite and infinite horizon average payoff $W^*_T$ and $W^*$ in the JPA setting are upper bounded as
	\begin{align*}
		W^*_T \leq \Wspp + m \frac{K}{T} \, ,
		\qquad
		W^* \leq \Wspp \, .
	\end{align*}
	Here $\Wspp$ is the optimal value of SPP \eqref{eq:fluid-jpa-obj}-\eqref{eq:fluid-jpa-dmd-constr}.
\end{prop}

\begin{rem}
In ride-hailing applications, one can consider a natural setting where the platform offers (fixed) discounts to customers to compensate for their inconvenience when the pick-up and drop-off locations are different from what the customers want.
This setting can be incorporated into our JPA model by interpreting $\{c_{j\tau k}\}$'s as such compensations. 
For example, for $\tau=(j',k')$, the platform can set $c_{j\tau k}=0$ when $j=j',k=k'$, and $c_{j\tau k}>0$ for other $j\in\cP(j'),k\in\cD(k')$. 
The platform can set the discount $c_{j\tau k}$'s through offline learning.
\end{rem}

\subsection{Lyapunov Analysis for JPA}\label{subsec:lyap-jpa}
For JPA setting, we have the following lemma which is analogous to Lemma \ref{lem:one_step_lyap}.
\begin{lem}\label{lem:opt_gap_decomp_jpa}
	Consider congestion function $f(\cdot)$ that is strictly increasing and continuously differentiable. 
	We have the following decomposition:
	\begin{align}
		W^* - \mathbb{E}[v^{\textup{MBP}}[t]|\nq[t]]
		\leq &\ \tK\left(F (\nq[t]) - \mathbb{E}[F (\nq[t+1])|\nq[t]]\right)
		+
		\frac{1}{2 \tK} \max_{j\in V} f_j'(\bar{q}_j[t]) \label{eq:opt_gap_decomp_jpa}\\
		&+ \left(\Wspp
		- \gJPA(\bof(\nq[t])) \right)
		+
		\mathds{1}\left\{q_{j}[t]=0,\exists j\right\}\, ,
		\nonumber
	\end{align}
	where $\gJPA(\by)$ is defined in \eqref{eq:partial_dual_jpa}.
	%
\end{lem}

\begin{proof}[Proof Sketch]
	The proof is analogous to Lemma \ref{lem:one_step_lyap}.
	To use the strong duality argument, we prove below that $\gJPA(\cdot)$ defined in \eqref{eq:partial_dual_jpa} is indeed the partial dual function of the SPP \eqref{eq:fluid-jpa-obj}-\eqref{eq:fluid-jpa-dmd-constr}.
	Then because the primal problem is a concave optimization problem with linear constraints, strong duality must hold.
	
	Let $\by$ be the Lagrange multipliers corresponding to constraints \eqref{eq:fluid-jpa-flow-bal}. We have
	\begin{align*}
		\gJPA(\by) =\; &
		\max_{\sum_{j \in \cP(\tau),k\in\cD(\tau)}x_{j\tau k}\leq 1,x_{j\tau k} \geq 0}
		\sum_{\tau \in \cT}
		\phi_{\tau}
		\left(
		r_{\tau}\left(\sum_{j\in \cP(\tau),k\in\cD(\tau)}x_{j\tau k}\right) \right. \\
		&\ \left.
		+
		\sum_{j\in \cP(\tau),k\in\cD(\tau)}\left(-c_{j\tau k} + y_j - y_k
		\right)x_{j\tau k}
		\right)\\
		=\; &\sum_{\tau \in \cT}
		\phi_{\tau}
		\max_{\sum_{j \in \cP(\tau),k\in\cD(\tau)}x_{j\tau k}\leq 1,x_{j\tau k} \geq 0}
		\left(
		r_{\tau}\left(\sum_{j\in \cP(\tau),k\in\cD(\tau)}x_{j\tau k}\right) \right. \\
		&\ \left. +
		\sum_{j\in \cP(\tau),k\in\cD(\tau)}\left(
		-c_{j\tau k} + y_j - y_k
		\right)x_{j\tau k}
		\right)\, .
	\end{align*}
	Let $\mu_{\tau}=\sum_{j \in \cP(\tau),k\in\cD(\tau)}x_{j\tau k}$, we have
	\begin{align*}
		\gJPA(\by) =\; &\sum_{\tau\in \cT}
		\phi_{\tau}
		\max_{0\leq\mu_{\tau}\leq 1}
		\max_{\sum_{j \in \cP(\tau),k\in\cD(\tau)}x_{j\tau k}= \mu_{\tau},x_{j\tau k} \geq 0}
		\left(
		r_{\tau}\left(\mu_{\tau}\right)
		+
		\sum_{j\in \cP(\tau), k\in \cD(\tau)}\left(
		-c_{j\tau k} + y_j - y_k
		\right)x_{j\tau k}
		\right)\\
		=\; &\sum_{\tau \in \cT}
		\max_{0\leq\mu_{\tau}\leq 1}
		\left(
		r_{\tau}\left(\mu_{\tau}\right)
		+
		\mu_{\tau}\max_{j\in\cP(\tau),k\in\cD(\tau)}	
		\left(-c_{j\tau k} + y_j - y_k\right)
		\right)\, .
	\end{align*}
\end{proof}

\subsection{Proof of Theorem \ref{thm:main_regret_jpa}}
The following lemma is the counterpart of Lemma \ref{lem:dual_subopt} for the JPA setting.
\begin{lem}\label{lem:local_polyhedral_jpa}
	Consider congestion function $f(\cdot)$ that is strictly increasing and continuously differentiable, and any $\bphi$ with connectedness $\alpha(\bphi,\cP,\cD)>0$.
	We have
	\begin{align*}
		\gJPA(\by) - \Wspp\geq\
		\alpha(\bphi,\cP,\cD)\cdot \left[\max_{j\in V}y_j - \min_{k\in V}y_k - 2m \right]^+\, ,
	\end{align*}
	where $\Wspp$ is the value of SPP \eqref{eq:fluid-jpa-obj}-\eqref{eq:fluid-jpa-dmd-constr}, and $\alpha(\bphi,\cP,\cD)$ is defined in \eqref{eq:strong-connectivity-jea}.
\end{lem}

\begin{proof}[Proof Sketch]
	The proof is a direct extension of the proof of Lemma \ref{lem:dual_subopt}.
	The key observation is that: if $y_j-y_k\geq 2 \geq 2\max_{j,k\in V,\tau\in\cT}|c_{j\tau k}|+\bar{p}$, then for any $\tau\in\cP^{-1}(j)\cap \cD^{-1}(k)$ we have
	\begin{align*}
		\textup{argmax}_{\{0\leq\mu_{\tau}\leq 1\}}
		\left(
		r_{\tau}(\mu_{\tau})
		+
		\mu_{\tau}\cdot
		\max_{j\in\cP(\tau),k\in\cD(\tau)}
		\left(
		-c_{j\tau k} + y_j - y_k
		\right)
		\right)&=1\, ,
	\end{align*}
	for any $\tau\in\cP^{-1}(k)\cap \cD^{-1}(j)$ we have:
	\begin{align*}
		\textup{argmax}_{\{0\leq\mu_{\tau}\leq 1\}}
		\left(
		r_{\tau}(\mu_{\tau})
		+
		\mu_{\tau}\cdot
		\max_{k\in\cP(\tau),j\in\cD(\tau)}
		\left(
		-c_{k\tau j} + y_k - y_j
		\right)
		\right)&=0\, .
	\end{align*}
\end{proof}

\begin{proof}[Proof sketch for Theorem \ref{thm:main_regret_jpa}]
	The proof is a direct extension of the proof of Theorem \ref{thm:main_regret}, and follows from Lemmas \ref{lem:opt_gap_decomp_jpa}, \ref{lem:local_polyhedral_jpa}, and the JPA counterpart of Lemma~\ref{lem:bound_rhs_jea} (which is almost identical to Lemma~\ref{lem:bound_rhs_jea}, and was hence omitted).
	We bound $M_1$ using Lemma \ref{lem:inv-sqrt-satisfy-cond}.
\end{proof}

\section{Appendix to Section~\ref{subsec:numerics-ridehailing}}\label{appen:subsec:numerics-ridehailing}

In Appendix \ref{subsec:sims_setup}, we provide 
a full description of our simulation environment and the benchmarks we employ.
In Appendix \ref{subsec:sims-large-network}, we investigate the performance of MBP policy when the network size (i.e., number of locations) grows large.

\yka{Need to update the organization of the section, and summarize the findings for large networks in the main paper.}

\subsection{Simulation Setup}\label{subsec:sims_setup}
Throughout the numerical experiments, we use the following model primitives.
\begin{figure}[!t]
	\centering
	\includegraphics[width=0.4\linewidth]{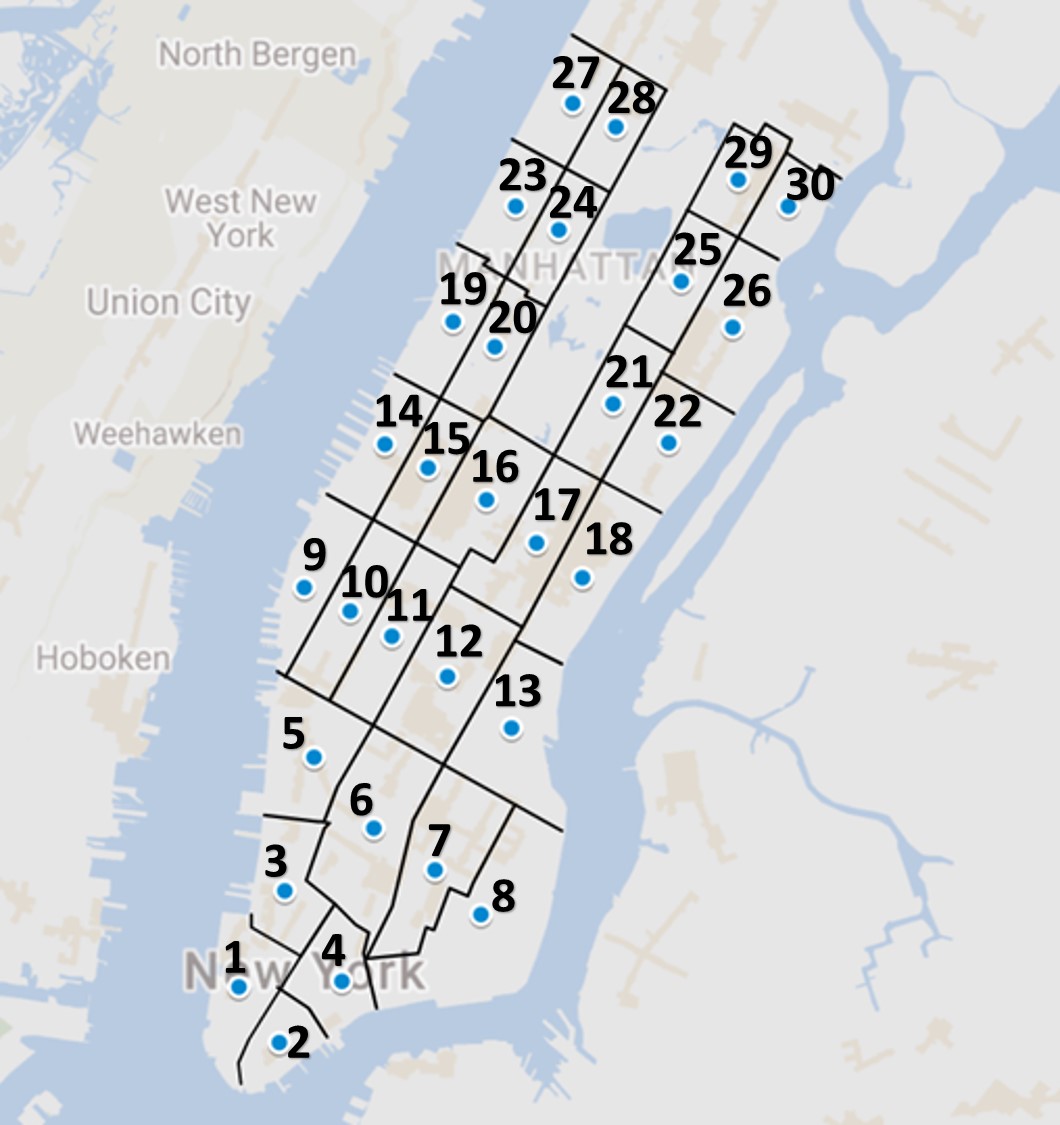}
	\caption{A $30$ location model of Manhattan below $110$-th street, excluding the Central Park. (tessellation is based on \cite{buchholz2015spatial})
	}\label{fig:nyc30}
\end{figure}
%
\begin{compactitem}[wide, labelwidth=!, labelindent=0pt]
	\item \textit{Payoff structure.}
	In many scenarios, ride-hailing platforms take a commission proportional to the trip fare, which increases with trip distance/duration.
	Motivated by this, we present results for $w_{ijk}$ set to be the travel time from $j$ to\footnote{We tested a variety of payoff structures, and found that our results are robust to the choice of $\bw$. One set of tests was to generate $100$ random payoff vectors $\bw$, with each $w_{ijk}$ drawn i.i.d. from Uniform(0,1); we found that the results obtained are similar.} 
	$k$.
	
	\item \textit{Graph topology.}
	We consider a $30$-location model of Manhattan below $110$-th street excluding Central Park (see Figure \ref{fig:nyc30}), as defined in \citet{buchholz2015spatial}.
	We let pairs of regions which share a non-trivial boundary be pickup compatible with each other, e.g., regions $23$ and $24$ are compatible but regions $23$ and $20$ are not.
	
	\item \textit{Demand arrival process, and pickup/service times.}
	We consider a stationary demand arrival process, whose rate is the average decensored demand from 8 a.m. to 12 p.m. estimated in~\cite{buchholz2015spatial}. 
	\yka{This appendix needs to be fixed up. It seems to have been copy-pasted from the main paper for now.}
	This period includes the morning rush hour and has significant imbalance of demand flow across geographical locations (for many customers the destination is in Midtown Manhattan).\footnote{We also simulated the MBP and greedy policy with time-varying demand arrival rates, where the demand arrival rate is estimated (from the real data) for every $5$ min interval.
		Our MBP policy still significantly outperforms the greedy policy.
		\yka{Relocated this footnote and deleted the last sentence about how it's hard to compute $\Wspp$ with time-varying demand.}}
	We estimate travel times between location pairs using Google Maps.\footnote{We extract the pairwise travel time between region centroids (marked by the dots in Figure \ref{fig:nyc30}) using Google Maps, denoted by $\hat{D}_{ij}$'s ($i,j=1,\cdots,30$).
		We use $\hat{D}_{jk}$ as service time for customers traveling from $j$ to $k$.
		For each customer at $j$ who is picked up by a supply from $i$ we add a pickup time~\footnote{We use the inflated $\hat{D}_{ij}$'s as pickup times to account for delays in finding or waiting for the customer.} of $\tilde{D}_{ij} = \max\{\hat{D}_{ij},2 \textup{ minutes}\}$.
		The average travel time across all demand is $13.1$ minutes, and the average pickup time is about $4$ minutes (it is policy dependent).}
	
	\item \textit{Number of cars, and steady state upper bound.}
	\begin{compactitem}[wide, labelwidth=!, labelindent=0pt]
		\item \textit{Excess supply. } We use as a baseline the fluid requirement $K_{{\rm \tiny fl}}$ on number of cars needed to achieve optimal payoff.
		A simple workload conservation argument (using Little's Law) gives the fluid requirement as follows.
		Applying Little's Law, if the optimal solution $\bz^*$ of SPP \eqref{eq:fluid_linear_obj-z}-\eqref{eq:fluid_linear_dmd_constr-z} is realized as the average long run assignment, the mean number of cars who are currently occupied, i.e. serving or picking up customers is \yka{deleted ``at least''}
		$
		\sum_{j,k\in V}\sum_{i \in \cP(j)}D_{ijk} \cdot z^*_{ijk}\, ,
		$ for $D_{ijk} \triangleq \tilde{D}_{ij} + \hat{D}_{jk}$,
		where $\tilde{D}_{ij}$ is the pickup time from $i$ to $j$ and $\hat{D}_{jk}$ is the travel time from $j$ to $k$.
		In our case, it turns out that $K_{{\rm \tiny fl}} = 7,307$.
		We use $1.05\times K_{{\rm \tiny fl}}$ as the total number of cars in the system to study the excess supply case, i.e., there are $5\%$ extra (idle) cars in the system beyond the number needed to achieve the $\Wspp$ benchmark.
		\item  \textit{Scarce supply. }When the number of cars in the system is fewer than the fluid requirement, i.e., $K= \kappa K_{{\rm \tiny fl}}$ for $\kappa<1$, no policy can achieve a steady state performance of $\Wspp$.
		A tighter upper bound on the steady state performance is then the value of the SPP \eqref{eq:fluid_linear_obj-z}-\eqref{eq:fluid_linear_dmd_constr-z} with the additional supply constraint
		\begin{align*}
		\sum_{j,k\in V}\sum_{i \in \cP(j)}D_{ijk} \cdot z_{ijk}\leq K\, .
		\end{align*}
		We denote the value of this problem for $K = \kappa K_{{\rm \tiny fl}} $ by $\Wspp({\kappa})$. We study the case $\kappa = 0.75$ as an example of scarce supply. For our simulation environment, it turns out that $\Wspp(0.75) \approx 0.86 \Wspp$, i.e., $0.86 \Wspp$ is an upper bound on the per period payoff achievable in steady state.
	\end{compactitem}
\end{compactitem}

\smallskip
We compare the performance of our MBP-based policy against the following policies:
\begin{compactenum}[label=\arabic*.,leftmargin=*]
	\item \textit{Static (fluid-based) policy.} The fluid-based policy is a static randomization based on the solution to the SPP, given exactly correct demand arrival rates \citep*[see, e.g., ][]{banerjee2016pricing,ozkan2016dynamic}: Let $\bz^*$ be a solution of SPP. When a type $(j,k)$ demand arrives at location $j$, the randomized fluid-based policy dispatches from location $i\in\cP(j)$ with probability $z^*_{ijk}/\phi_{jk}$.
	
	\item \textit{Multi-hop Utility-Delay Optimal Algorithm (UDOA).} UDOA was proposed by \cite{neely2006super}. The policy uses an exponential Lyapunov function to avoid empty queues. The multi-hop UDOA can be viewed as a special case of MBP policy with exponential congestion function $f(q)=  \omega\cdot (e^{\omega (q-q_0)} - e^{\omega (q_0-q)})$, where $\omega,q_0>0$ are positive constants.\footnote{The parameters $\omega,q_0$ we use in the simulations are found using grid search. We set $\omega,q_0$ as the one that achieve the best performance in the grid search.} \yka{How do you set  in your simulations?}
	
	\item \textit{Deficit MaxWeight (DMW) policy.} The DMW policy was proposed by \cite{jiang2009stable} to deal with underflow problems in certain open networks. 
	DMW is a modification of the vanilla Backpressure policy: instead of using lengths of physical queues as congestion costs as in vanilla BP, DMW uses lengths of virtual queues as congestion costs. 
	The dynamics of virtual queues differ from that of physical queues each time the underflow constraint binds: a unit of ``virtual supply unit'' is relocated, but the physical queue lengths do not change.
	(Virtual queue lengths are allowed to be negative.)
\end{compactenum}

\subsection{MBP Policy in Large Networks}\label{subsec:sims-large-network}
Recall that in Corollary \ref{cor:stationary-main-result}, the steady-state optimality gap of MBP is shown to be $O\left(\frac{m^2}{K}\right)$ for congestion function \eqref{eq:inv_sqrt_mirror_map}.
Compared with the $O\left(\frac{m}{K}\right)$ bound for the fluid-based policy proved in \cite{banerjee2016pricing}, our bound for MBP has the same dependence on $K$ but worse dependence on $m$.\footnote{For the simulations of MBP in Section \ref{subsec:sims-large-network}, we use the congestion function $f(q)=-m^{-1/2}\cdot q^{-1/2}$, which is slightly different from \eqref{eq:inv_sqrt_mirror_map}.
This congestion function guarantees that certain ``invariances'' hold, where $f(\frac{1}{m})\equiv 1$ as the network grows large.
Although it has poorer performance guarantee compared with \eqref{eq:inv_sqrt_mirror_map} (in Appendix \ref{append:bound-sims-large-network} we show that its steady-state optimality gap is $O(m^4/K)$), it actually performs better in the simulations.} A natural question is whether the worse dependence on $m$ reflects poorer performance, or if it is a proof artifact.
The following numerical experiment studies this question.

We compare the performance of our MBP policy with the fluid-based policy in \cite{banerjee2016pricing} for different values of fleet size $K$ and network size (i.e., number of locations) $m$. 
%
To study the effect of network size $m$ on the performance of both policies, we consider the entry control setting {where there is no assignment flexibility and $\cP(i,j)=i$, $\cD(i,j)=j$.}
We ``grow'' the network as follows: 
We start with the 30-location network of Manhattan described above. 
We then split each location into $n$ ``child'' locations, resulting in a larger graph with $30n$ locations.
We define the demand arrival rates in the new network as follows: Suppose $j$, $k$ are locations in the original 30-location network, and the type $(j,k)$ demand arrives with rate $\phi_{jk}$. 
Then for $j'$, $k'$ that are the child locations of $j$ and $k$ respectively, we define the arrival rate of type $(j,k')$ in the new network to be $\phi_{jk}/n^2$.
{
Such simulation design allows us to focus on the impact of network size $m$ on policy performance.}
{Since we are interested in steady-state performance, for each experiment we simulate a $20$-hour period and only record the average per customer payoff in the last two hours.}
\yka{clarify that we consider a setting where there is no assignment flexibility (recall that the entry control only model has been removed from the paper). clarify that growing the network in this way, without any assignment flexibility, constrains the ability of the platform to find a car for a customer, and hence hurts performance in a finite system. On the other hand, the benchmark $Wopt$ in fact does not vary with $n$, right?}

\textbf{Results.} The simulation results are shown in Figure \ref{fig:sims-stationary-large-network}. 
{The results demonstrate that MBP consistently outperforms the fluid-based policy in steady state across different choices of $m$ and $K$.
Also, the steady-state suboptimality of MBP appears to scale as $m/K$ (and not $m^2/K$, which was the scaling of our formal upper bound on the optimality gap). Note that $K/m$ is the number of cars per location. 
To make it visually apparent to the reader that the optimality gap depends on $K/m$, in each subfigure of Figure~\ref{fig:sims-stationary-large-network}, we vary $m$ while holding $K/m$ fixed.}
\yka{In all experiments where there are at least 10 cars per location, MBP captures more than $90\%$ of $\Wopt$ in steady state, suggesting that the dependence of the regret on $m$ is in fact linear.} 

\begin{figure}[h]
        \centering
        \subfigure{
            \includegraphics[width=0.4\textwidth]{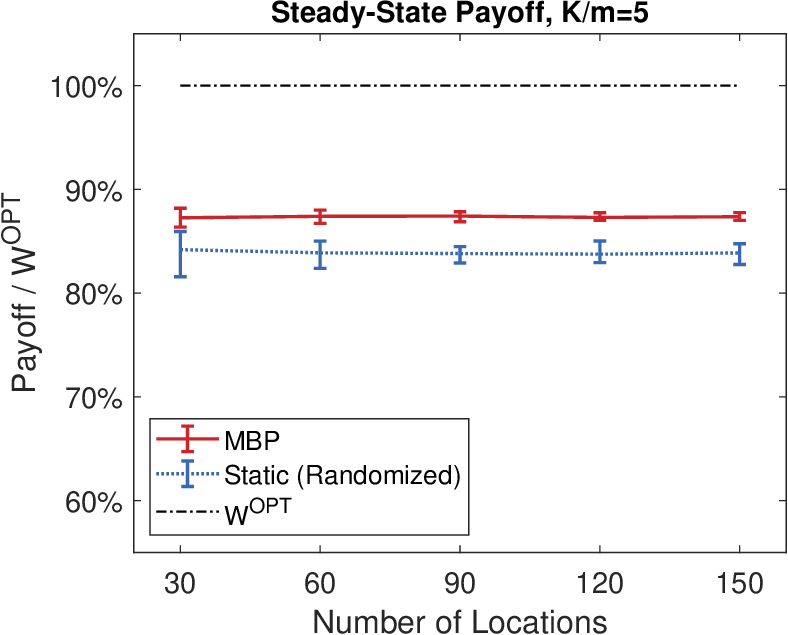}
            }
         \subfigure{
            \includegraphics[width=0.4\textwidth]{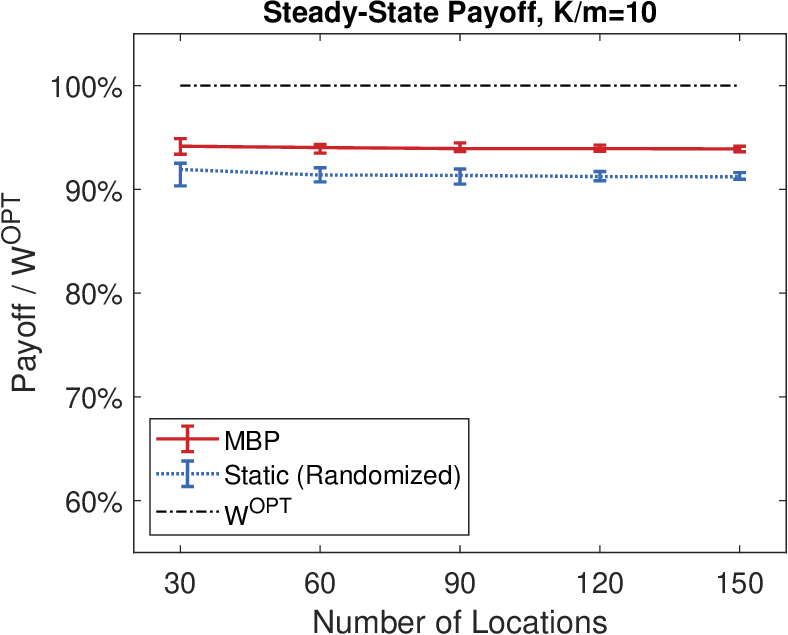}
            }
            \par\bigskip
        \subfigure{
            \includegraphics[width=0.4\textwidth]{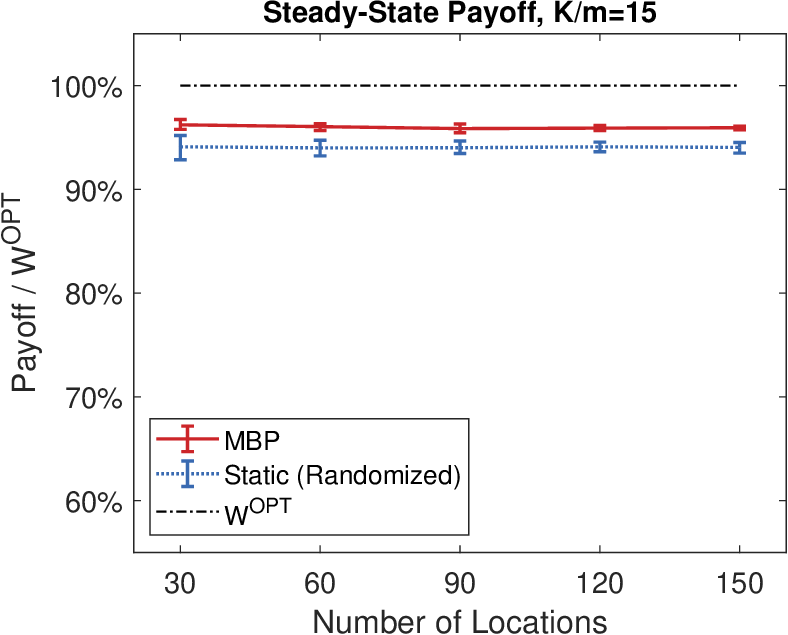}
            }
        \subfigure{
            \includegraphics[width=0.4\textwidth]{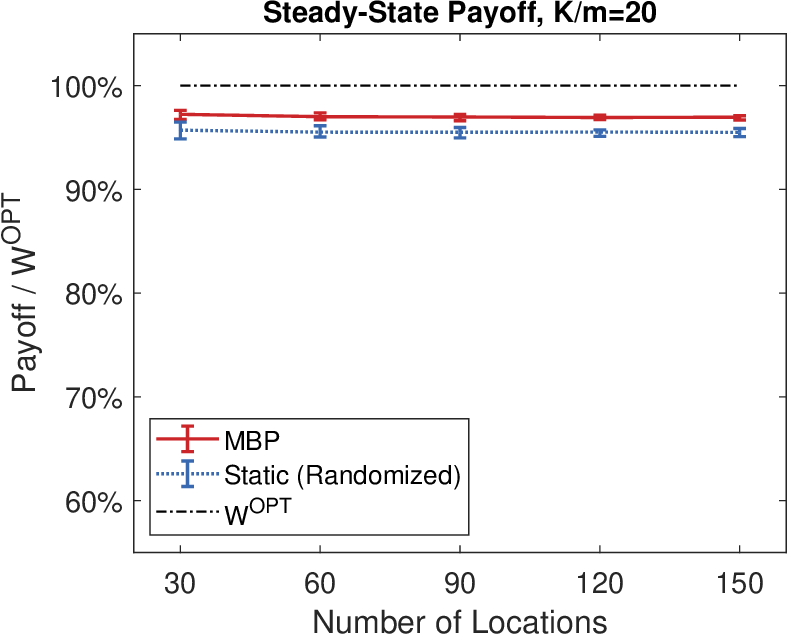}
            }
        \caption{Steady-state per-period payoff under the MBP policy and the static (fluid-based) policy, under values of $K/m$ (number of cars per location). 
        For each data point we generate 100 sample paths and plot the 90\% confidence interval.
        The results suggest that for both MBP and the fluid-based policy, their steady-state suboptimality scales as $K/m$ for this problem instance, and that MBP outperforms the fluid-based policy.
        }
        \label{fig:sims-stationary-large-network}
    \end{figure}

\section{Application to Scrip Systems}\label{sec:scrip-systems}
\yka{Updated this section.}
In this section, we illustrate the application of our model to scrip systems.
A scrip system is a nonmonetary trade economy where agents use scrips (tokens, coupons, artificial currency) to exchange services.
These systems are typically implemented when monetary transfer is undesirable or impractical.
For example, \cite{agarwal2019market} suggest that in kidney exchange, to align the incentives of hospitals, the exchange should deploy a scrip system that awards points to hospitals that submit donor-patient pairs to the central exchange, and deducts points from hospitals that conduct transplantations.
Another well-known example is Capitol Hill Babysitting Co-op \citep[][see also \citealt{johnson2014analyzing}]{sweeney1977monetary}, where married couples pay for babysitting services by another couples with scrips.
A key challenge in these markets is the design of the
admission-and-provider-selection rule:
If an agent is running low on scrip balance, should they be allowed to request services? If yes, and if there are several possible providers for a trade, who should be selected for service?

We introduce a natural model of a scrip system with multiple agents and heterogeneous services, where agents exchange scrips (i.e., artificial currency) for services.
There is a central planner who tries to maximize social welfare by making decisions over whether a trade should occur when a service request arises, and if so, who the service provider should be. The setting is seen to be a special case of the joint entry-assignment (JEA) setting studied in Section \ref{sec:model};
yielding a simple MBP control rule that comes with the guarantee that it asymptotically maximizes social welfare.


\subsection{Model of Scrip Systems}
We now describe a model of a service exchange (i.e., a scrip system). 
Consider an economy with a finite number of agents indexed by $j \in V$.
There are finitely many types of service types $\Sigma$ indexed by $\sigma \in \Sigma$. A demand type $\tau = (j, \sigma)$ is specified by the requestor $j \in V$ along with the requested service type $\sigma \in \Sigma$, i.e., the set of demand types $\cT \subseteq V \times \Sigma$. If the demand is served, the requestor pays a scrip to the service provider. Accordingly, for each demand type $\tau=(j,\sigma)$,  we define  the \emph{compatible} set of agents who can serve it as $\cD(\tau) \subseteq V \backslash \{j\}$.
We again consider a slotted time model, where in each period exactly one service request arises, with demand type drawn i.i.d. from the distribution\footnote{Time-varying demand arrival rates can be seamlessly handled since they are permitted in the JEA setting; we work with stationary arrival rates only for the sake of brevity.} $\bphi = (\phi_\tau)_{\tau \in \cT}$. 
There are a fixed number $K$ of scrips in circulation, distributed among the agents. For each $\tau =(j, \sigma) \in \cT$, serving a demand type $\tau=(j, \sigma)$ generates payoff 
$w_{j \sigma}$.

Observe that our model here is a special case of the JEA setting.\footnote{This can be seen as follows: For each demand type $\tau \in \cT$, the compatible set of service providers $\cD(\tau)$ is identified with the ``dropoff neighborhood'' for $\tau$. The ``pickup neighborhood'' is a singleton set consisting of the requestor $\cP( \tau) = \{ j\} $. Finally,  for each $k \in \cD(\tau)$ we define the payoff $w_{j\tau k} \triangleq w_{j \sigma}$. The primitives $V, \cP, \cD, \bphi$ and $(w_{j\tau k})_{\tau = (j, \sigma) \in \cT, k \in \cD(\tau)}$ fully specify the JEA setting.} 

\medskip
\emph{Comparison with the model in Johnson et al. \citep{johnson2014analyzing}.}
The work \cite{johnson2014analyzing} consider the case where there is only \emph{one} type of service which \emph{all} agents can provide, and requests arrive at the same rate from all agents.
One one hand, we significantly generalize their model by considering heterogeneous service types, general compatibility structures, and asymmetric service request arrivals. They obtain an optimal rule for the symmetric fully connected setting, whereas we develop an asymptotically optimal control rule for the general setting.
On the other hand, we only focus on the central planner setting, and leave the incentives of agents for future work (see the remarks in Section \ref{subsec:scrip_mbp}).

\subsection{MBP Control Rule and Asymptotic Optimality}\label{subsec:scrip_mbp}
Since the model above is a special case of the JEA setting, we immediately obtain an MBP control rule for scrip systems that achieves asymptotic optimality as a special case of Algorithm~\ref{alg:mirror_bp} and Theorem~\ref{thm:main_regret}. This control rule is specified in Algorithm \ref{alg:mbp_scrip} below.
The congestion function $f(\cdot)$ can again be chosen flexibly; we state our formal guarantee for the congestion function in \eqref{eq:inv_sqrt_mirror_map}.  
Denote the normalized number of scrips (defined in \eqref{eq:normalized_queue}) in the possession of agent $i$ by $\bar{q}_i$.

\medskip
\begin{algorithm}[H]\label{alg:mbp_scrip}
	\SetAlgoLined
	{At the start of period $t$, the central planner receives a request from agent $j$ for service type $\sigma$, i.e., demand type $\tau=(j,\sigma)$ arises.}
	
	\eIf{$w_{j\sigma} + f(\bar{q}_j[t])-\min_{k\in \cD(\tau)} f(\bar{q}_k[t])\geq 0$ \textup{\textbf{and}} $\bar{q}_i[t] > 0$}{
		$k^*\leftarrow \textup{argmin}_{k\in\cD(\tau)} f(\bar{q}_k[t])$,\\
		Let agent $k^*$ provide the service to $j$, and agent $j$ gives one scrip to agent $k^*$ \;
	}{
		Reject the service request from agent $j$\;
	}
	\caption{MBP Admission-and-provider-selection rule for scrip systems}
\end{algorithm}
\medskip

Theorem~\ref{thm:main_regret} immediately
implies the following performance guarantee for Algorithm \ref{alg:mbp_scrip}. 
%

%

%
\begin{cor}\label{prop:regret_scrip}
	Consider a set of $m$ agents and any demand type distribution and compatibilities $(\bphi,\cP,\cD)$ (where $\cP$ is identity) that satisfy Condition \ref{cond:str_connect_jea}. Then there exists $K_1 = \textup{poly}\left(m,\frac{1}{\alpha(\bphi, \cP,\cD)}\right )$ and a universal $C>0$ that does not depend on $m$, $K$ or $\alpha(\bphi,\cP,\cD)$, such that for the congestion function $f(\cdot)$ defined in \eqref{eq:inv_sqrt_mirror_map}, for any $K\geq K_1$, the following guarantee holds for Algorithm \ref{alg:mbp_scrip}
	\begin{align*}
		L^{\textup{MBP}}_T\leq
		M_1\cdot \frac{K}{T}
		+
		M_2
		\cdot
		\frac{1}{K}\, ,
		\qquad\textup{and}\qquad
		L^{\textup{MBP}} \leq
		M_2
		\cdot
		\frac{1}{K}\, , \qquad \textup{for} \ M_1 \triangleq Cm \ \textup{and} \ M_2 \triangleq C m^2 \,.
	\end{align*}
\end{cor}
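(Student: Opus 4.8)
The plan is to obtain Corollary~\ref{prop:regret_scrip} directly from Theorem~\ref{thm:main_regret_jea} by recognizing the scrip model as a special case of the JEA setting (as already indicated in the footnote following the model description) and then specializing the constants. First I would make the reduction explicit: given the scrip primitives, define a JEA instance on the same node set $V$ with pickup neighborhoods $\cP(\tau)=\{j\}$ for $\tau=(j,\sigma)$ (so $\cP$ is the identity in the sense that the requestor is the unique pickup node), dropoff neighborhoods $\cD(\tau)$ as given, payoffs $w_{j\tau k}\triangleq w_{j\sigma}$ for every $k\in\cD(\tau)$, and \emph{no} buffer-constrained nodes, i.e.\ $\Vb=\emptyset$ and hence $\bar d_j=1$ for all $j\in V$. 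Since $m=|V|>1$ we have $\sum_{j\in V}\bar d_j=m>1$, so the feasibility hypothesis of Theorem~\ref{thm:main_regret_jea} holds; since scrips circulate with a stationary request distribution $\bphi$, the demand arrival rates vary $\eta$-slowly with $\eta=0$; and the strong-connectivity hypothesis is exactly Condition~\ref{cond:str_connect_jea} with $\alpha_{\min}=\alpha(\bphi,\cP,\cD)$, which (with $\cP$ the identity) is the natural requirement that scrips can move between any two agents via a sequence of positive-rate compatible trades.

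Next I would check that Algorithm~\ref{alg:mbp_scrip} is literally the specialization of Algorithm~\ref{alg:mirror_bp_jea} to this instance. In Algorithm~\ref{alg:mirror_bp_jea} the pair $(j^*,k^*)$ maximizes $w_{j\tau k}+f_j(\bar q_j)-f_k(\bar q_k)$ over $j\in\cP(\tau),k\in\cD(\tau)$; since $\cP(\tau)=\{j\}$ is a singleton and neither $w_{j\tau k}=w_{j\sigma}$ nor $f_j(\bar q_j)$ depends on $k$, this reduces to $j^*=j$ and $k^*\in\arg\min_{k\in\cD(\tau)}f(\bar q_k)$, matching the selection rule of Algorithm~\ref{alg:mbp_scrip}; the admission test $w_{j^*\tau k^*}+f_{j^*}(\bar q_{j^*})-f_{k^*}(\bar q_{k^*})\geq0$ together with $q_{j^*}>0$ becomes $w_{j\sigma}+f(\bar q_j)-\min_{k\in\cD(\tau)}f(\bar q_k)\geq0$ and the requestor holding at least one scrip; and the buffer test $q_{k^*}<d_{k^*}$ is vacuous since $\Vb=\emptyset$. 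I would also note that the congestion function \eqref{eq:inv_sqrt_mirror_map} used in the corollary coincides with the congestion-function vector \eqref{eq:inv_sqrt_mirror_map_gen} of Theorem~\ref{thm:main_regret_jea} restricted to $V\setminus\Vb=V$, so the two policies are identical and Theorem~\ref{thm:main_regret_jea} applies verbatim.

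Finally I would compute the constants. With $\eta=0$, Theorem~\ref{thm:main_regret_jea} gives $L^{\textup{MBP}}_T\leq M_1\cdot K/T+M_2/K$, and its proof (which, like that of Theorem~\ref{thm:main_regret}, passes through the intermediate $\Wspp$-based bound) also yields the infinite-horizon guarantee $L^{\textup{MBP}}\leq M_2/K$ upon letting $T\to\infty$. Theorem~\ref{thm:main_regret_jea} gives $M_1=Cm$ directly, and $M_2=C\,\frac{\sqrt m}{\min_{j\in V}\bar d_j}\bigl(\frac{\sum_{j\in V}\bar d_j}{\min\{\sum_{j\in V}\bar d_j-1,\,1\}}\bigr)^{3/2}$; substituting $\bar d_j=1$ for all $j$ and using $m\geq2$ (so $\min\{m-1,1\}=1$) yields $M_2=C\sqrt m\cdot m^{3/2}=Cm^2$, as claimed, while $K_1=\textup{poly}(m,\bar{\bd},1/\alpha_{\min})$ becomes $\textup{poly}(m,1/\alpha(\bphi,\cP,\cD))$ and the universality of $C$ carries over. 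There is no genuine obstacle here: the entire content is the identification of the scrip model with a JEA instance and the bookkeeping of constants; the only points requiring a moment of care are verifying the feasibility condition $\sum_{j\in V}\bar d_j>1$ (immediate from $m>1$) and confirming that the $\arg\max$/$\arg\min$ reduction and the vacuous buffer constraint make Algorithm~\ref{alg:mbp_scrip} exactly the restriction of Algorithm~\ref{alg:mirror_bp_jea}.
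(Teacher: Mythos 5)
Your proposal is correct and follows exactly the paper's route: the paper proves this corollary by observing (via the footnote identifying $\cP(\tau)=\{j\}$, $w_{j\tau k}=w_{j\sigma}$, and $\Vb=\emptyset$) that the scrip model is a special case of the JEA setting and then invoking Theorem~\ref{thm:main_regret_jea} with $\eta=0$, which is precisely your argument. Your bookkeeping of the constants ($M_2 = C\sqrt{m}\cdot m^{3/2}=Cm^2$ after substituting $\bar d_j=1$) and your verification that Algorithm~\ref{alg:mbp_scrip} is the literal restriction of Algorithm~\ref{alg:mirror_bp_jea} are both accurate.
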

%
%

A few remarks on the model and results are in order:
\begin{compactenum}[1.,wide, labelwidth=!, labelindent=0pt]
	\item \emph{Necessity of declining trades.} By considering a more general setting than in \cite{johnson2014analyzing}, we obtain qualitatively different insights on the optimal control rule by central planner.
	In \cite{johnson2014analyzing}, it is optimal for the central planner to always approve trades, and let the agent with fewest scrips be the service provider.
	In our general setting, however, in many cases the central planner has to decline a non-trivial fraction of the trades to sustain flow balance of scrips in the system (constraint \eqref{eq:fluid_flow_bal}).\footnote{For example, consider a setting with two agents $j_1$ and $j_2$. Denote the demand type requested by $j_1$ as $\tau_1$  (this demand type can be served by $j_2$) and similarly define $\tau_2$.
		Under the mild condition $\phi_{\tau_1} \neq \phi_{\tau_2}$, the planner will be forced to decline a positive fraction of requests.}
	When a trade is approved, our policy also chooses the compatible trade partner with the fewest scrips as service provider.
	\item \emph{Incentives}. Our analysis of scrip systems is meant to illustrate the versatility of MBP control policies, hence we only focused on the central planner setting.
	It would be interesting to study the MBP control rule in the decentralized setting where the agents recommended to be potential trading partners can decide whether to trade, but that is beyond the scope of the current paper. (At a high level, we expect that agents will have an incentive to provide service whenever requested by the MBP policy as long as (i) agents are sufficiently patient, and (ii) agents benefit from trading, i.e., agents derive more value from receiving service than the cost they incur from providing service.)
	%
	
\end{compactenum}


\section{Additional Proofs and Examples}\label{append:additional-proofs}
\subsection{Greedy policy typically incurs $\Omega(1)$ loss}\label{append:greedy}
	Consider a network with three nodes $V=\{1,2,3\}$. Each demand type $\tau=(i,j)$ corresponds to an origin-destination pair, and that $\cP(i,j)=\{i\}$, $\cD(i,j)=\{j\}$.
	The demand arrival probabilities are: $\phi_{12}=\epsilon$, $\phi_{23} = \frac{1}{3} + \epsilon$, $\phi_{21} = \phi_{32} = \frac{1}{3} - \epsilon$ (where $0<\epsilon <\frac{1}{6}$), and payoffs are: $w_{23}=w>0$, $w_{12}=w_{21}=w_{32}=\frac{w}{2}$.
	Let $\bx^*$ be the optimal solution to the SPP($\bphi$) \eqref{eq:fluid_obj}-\eqref{eq:fluid_dmd_constr}. By inspection, $\bx^*$ should induce the maximum circulation in each of the two cycles \textup{1---2---1} and \textup{2---3---2}, 
	hence $x_{12}^*=x_{32}^*=1$,
	$x_{21}^*=\frac{\epsilon}{\frac{1}{3}-\epsilon}$, $x_{23}^*=\frac{\frac{1}{3}-\epsilon}{\frac{1}{3}+\epsilon}$.
	We know that there exists a policy whose performance approaches the value of the SPP as $K \to \infty$ \citep*{banerjee2016pricing}.
	We will prove by contradiction that the greedy policy incurs an $\Omega(1)$ loss for this example, by showing that its payoff per period is $\Omega(1)$ below the value of the SPP. Consider the steady state under the greedy policy.
	Suppose the loss is vanishing, i.e., all but an $o(1)$ fraction of type $(1,2)$ and type $(3,2)$ demand are served.
	Suppose a $\gamma$ fraction of the time there is a supply unit present at node $2$. As a result, since the greedy policy is being used, a $\gamma$ fraction of demands of type $(2,1)$ are served, and a $\gamma$ fraction of demands of type $(2,3)$ are served. Flow-balance at nodes $1$ and $3$, respectively, implies that we have $(\frac{1}{3}-\epsilon)\gamma=\epsilon-o(1)$, $(\frac{1}{3}+\epsilon)\gamma=\frac{1}{3}-\epsilon-o(1)$.
	However, these two equations cannot both be satisfied as $K \to \infty$ unless $\epsilon=\frac{1}{9}$.
	We infer that the greedy policy incurs an $\Omega(1)$ loss in this network for any $\epsilon\in(0,\frac{1}{6})$, $\epsilon\neq \frac{1}{9}$.

\subsection{Failure of standard backpressure}\label{append:bp_fails}

In this section, we show that BP can fail in dealing with no-underflow constraints. 
Recall that BP uses a linear function of normalized queue lengths, i.e., $\by[t] = c\cdot \bar{\bq}[t]$ (where $c>0$), as shadow prices.
Intuitively, this could cause issues since $\bar{\bq}$ is subject to (physical) state space constraints, but the shadow prices $\bar{\by}$ can take any value since they are the Lagrange multipliers of equality constraints \eqref{eq:fluid_flow_bal}. Indeed, we construct an example such that for all $\bar{\bq} \in \Omega$, the corresponding shadow prices $c\cdot \bar{\bq}$ are suboptimal for the dual problem \eqref{eq:partial_dual}, when the proportionality constant $c$ is not chosen to be sufficiently large.

\begin{examp}[BP is far from optimal if $c$ is not large enough]\label{examp:bp-fail}
	Consider the network introduced in Appendix~\ref{append:greedy}.\yka{Any constraint on $c$?}
	%
	Suppose the platform employs backpressure where the shadow prices are taken to be proportional to (normalized) queue lengths $\by[t] = c\cdot\nq[t]$ with $c<\frac{3}{2}w$.\yka{changed from $c\bq/K$ to be consistent with Sec 4.2.}
	
	Let $\by^*$ be the optimal dual variables in \eqref{eq:partial_dual}. By complementary slackness we have that the set of dual optima are $\by^*$ which satisfy
	\begin{align*}
		\frac{w}{2} + y_1^* - y_2^*\geq 0\, ,\quad
		\frac{w}{2} + y_2^* - y_1^* =0\, , \quad
		w+y_2^*-y_3^*=0\, ,\quad
		\frac{w}{2}+y_3^*-y_2^*\geq 0\, .
	\end{align*}
	Hence $\by^*$ takes the form $\by^*=(y_1^*,y_1^*-\frac{w}{2},y_1^*+\frac{w}{2})$ for arbitrary $y_1^* \in \mathbb{R}$.
	Let $\nq^* \triangleq \by^*/c$ be the queue lengths corresponding to the optimal dual variables in \eqref{eq:partial_dual} with the additional constraint that the normalized queue lengths sum to $1$.
	Simple algebra yields $\nq^* = (\frac{1}{3}, \frac{2c-3w}{6c}, \frac{2c+3w}{6c})$.
	Because $c<\frac{3}{2}w$ we have $\bar{q}^*_2 < 0$, and so $\nq^*$ lies outside the normalized state space $\nq^* \notin \Omega$. Hence, the $\nq[t]$ will never converge to $\nq^*$ and BP is far from optimal.
\end{examp}

Even if the platform uses BP with sufficiently large $c$ to ensure that there are optimal dual variables $\by^*$ which correspond to a queue length vector, i.e., $\by^* = c\cdot \nq$ for some $\nq\in\Omega$, we construct an example in  where the existing analysis of BP still fails. 
This is because in the analysis of BP one needs to show that the Lyapunov function decreases in expectation in each period, which roughly means that the queue lengths are ``pushed'' towards the interior of state space under BP. However, in our example we show that the Lyapunov drift could be positive at certain states due to underflow constraints.
%
%


\begin{examp}[BP has positive Lyapunov drift at a certain state]\label{examp:bp-lyapunov-fail}
	Again consider the example in Appendix~\ref{append:greedy} and let $c\geq \frac{3}{2}w$.
	A typical analysis of BP is based on establishing that the ``drift'' defined by
	\begin{align*}
		\mathbb{E}\Big [ \, \lVert \nq[t+1] - \nq^* \rVert_2^2 \, \Big| \, \nq[t] \, \Big ]
		-
		\lVert \nq[t] - \nq^*\rVert_2^2
	\end{align*}
	is strictly negative when $\lVert \nq[t] - \nq^*\rVert_2 = \Omega(1)$.
	Suppose at time $t$ we have\footnote{The integrality of the components of $\bq[t]$ is non-essential, hence we assume all components of $\bq[t]$ are integers. Also, here we take the normalized queue lengths to be defined as $\nq[t] \triangleq \bq[t]/K$ to simplify the expressions.} $\nq[t] = (\frac{2}{3}, 0 ,\frac{1}{3})$; in particular, queue 2 is empty.
	Note that at $\nq[t]$, BP can only fulfill the demand going from $1$ to $2$ and from $3$ to $2$
	because of the no-underflow constraint.
	Straightforward calculation shows that the ``drift'' is positive for large enough $K$ if $\epsilon < \frac{w}{2c + 3w}$. \yka{The threshold I found was $\epsilon < w/(4c)$ and that the drift is positive for large enough $K$, not for arbitrary $K$. Please check. Maybe I'm missing something.} 
	%
\end{examp}

\subsection{Additional intuitions of why MBP alleviates the underflow problem}\label{appen:additional-intuition}
Standard Backpressure could run into two issues due to the underflow constraints: (i) The queue lengths corresponding to the optimal dual variables lie outside of the state space; (ii) The Lyapunov drift could be positive at certain ``boundary states'', i.e., states where some of the queues are empty.
		
		In the analysis, we show that the underflow problem is provably alleviated by MBP policies with an appropriately chosen congestion function. We provide some high-level intuitions below.
		Consider the MBP policy with congestion function $\bof(\cdot)$ given in \eqref{eq:inv_sqrt_mirror_map} and normalized queue lengths $\bar{\mathbf{q}}$ defined in \eqref{eq:normalized_queue}.
		
		For issue (i), first note the following crucial fact in our closed network setting, which is that we can add any constant to each coordinate of $\by^*$ and we will get another dual optimum.
		Under standard BP, the range of congestion functions $c\bar{q}_j$ is a bounded interval $[0,c]$ since $\bar{q}_j\in[0,1]$, hence it is possible that all dual optima lie outside of the range of $c\nq$.
		In contrast, $f(\bar{\mathbf{q}})$ maps $[0,1]$ to $(-\infty,-\sqrt{m})$ as $K\to\infty$, hence the range of $f(\bar{\mathbf{q}})$ is only right-bounded.
		Intuitively, for an optimal dual vector $\by^*$, even if $\by^*\notin \Omega$, there could exist $\nq\in\Omega$ such that $\by^* = \bof(\nq)$.
		Indeed, we can show that for large enough $K$, we can always find $\bar{\mathbf{q}}\in\Omega$ such that $\bof(\bar{\mathbf{q}})$ corresponds to 
an optimal dual vector. Under MBP, the normalized queue length converges to that $\mathbf{q}$.
\yka{[YK: Isn't it crucial that in our closed network setting we can add any constant to each coordinate of $\by^*$ and we will get another dual optimum? The current answer seems incomplete and doesn't clearly convey how the problem is resolved.]}
\yka{The following logic works only if you assume $\delta_K$ is $0$. It would be better to be careful in explaining how the argument extends to our choice of $\delta_K$.}

For issue (ii), first note that this problem only occurs when there exists an empty queue.
		At these states, the dual-suboptimality at $\bof(\bar{\mathbf{q}})$ is large (for the reasons that we provide below), which creates a negative Lyapunov drift that ``pushes'' $\bof(\bar{\mathbf{q}})$ towards the optimal dual variable.
		This corresponds to the intuition that MBP is more aggressive in preserving supply units in near-empty queues compared to BP, making the system less likely to violate the no-underflow constraints.
		This intuition is formalized by Lemma \ref{lem:dual_subopt}, where we show that the dual-suboptimality increases with $\max_j f_j(\bar{q}_j) - \min_j f_j(\bar{q}_j)$.
		Because $f(0)\approx -\sqrt{m} K^{1/4}$, , and that $\max_j f_j(\bar{q}_j)\geq f(1/m)$, we have that $\max_j f_j(\bar{q}_j) - \min_j f_j(\bar{q}_j)$ is indeed large (of order $\Omega(\sqrt{m} K^{1/4})$) when there are empty queues, hence the dual-suboptimality is also large.

\subsection{Performance guarantee of MBP with the congestion function used in Section \ref{subsec:sims-large-network}}\label{append:bound-sims-large-network}
In the numerical experiments in Section \ref{subsec:sims-large-network}, we use the following congestion:
\begin{align}
	f(\bar{q}_j) \triangleq -\frac{1}{\sqrt{m}}\cdot\bar{q}_j^{-\frac{1}{2}}\, , \label{eq:inv_sqrt_mirror_map_new}
\end{align}
instead of \eqref{eq:inv_sqrt_mirror_map}.
In the following, we prove that the steady-state suboptimality of MBP with congestion function \eqref{eq:inv_sqrt_mirror_map_new} is $O\left(\frac{m^4}{K}\right)$.
Here we assume that all buffers have infinite capacity, i.e., $V_b = \emptyset$, which is the case for the simulations in Section \ref{subsec:sims-large-network}.

\begin{prop}\label{prop:stationary-improved}
	Consider the setting in Theorem \ref{thm:main_regret}.
	When the demand arrivals are stationary ($\eta=0$), for any $K\geq K_1$, the following infinite-horizon guarantee holds for Algorithm~\ref{alg:mirror_bp} with congestion function \eqref{eq:inv_sqrt_mirror_map_new}:
	%
	\begin{align*}
		L^{\textup{MBP}} \leq
		M_2
		\frac{1}{K}\, , \qquad \textup{for} \ M_2 = Cm^4 \, .
	\end{align*}
\end{prop}

As in the proof of Theorem \ref{thm:main_regret}, the proof of this proposition boils down to verifying the growth conditions in Condition \ref{cond:mirror_map} for congestion function \eqref{eq:inv_sqrt_mirror_map_new}.

\begin{lem}\label{lem:inv-sqrt-new-satisfy-cond}
	The congestion function \eqref{eq:inv_sqrt_mirror_map_new} satisfies the growth conditions (Condition~\ref{cond:mirror_map}) with parameters $(\alpha_{\min},K_1,M_1,M_2)$ where
	$$
	K_1 = \textup{poly}\left(m,\frac{1}{\alpha_{\min}}\right)\, ,\quad M_1=C\, ,\quad M_2=C m^4\, .
	$$
	Here $C$ is a universal constant that is independent of $m$, $K$ and $\alpha_{\min}$.
\end{lem}

We first prove Proposition \ref{prop:stationary-improved} using Lemma \ref{lem:inv-sqrt-new-satisfy-cond}.

\begin{proof}[Proof of Proposition \ref{prop:stationary-improved}]
It follows from Lemma \ref{lem:inv-sqrt-new-satisfy-cond} that Lemma \ref{lem:rhs-bound-general} holds with $M_1 = C, M_2=Cm^4$. 
The rest of the proof are exactly the same as that of Theorem \ref{thm:main_regret}.
\end{proof}

Now we prove Lemma \ref{lem:inv-sqrt-new-satisfy-cond}.
\begin{proof}[Proof of Lemma \ref{lem:inv-sqrt-new-satisfy-cond}]
We verify Condition \ref{cond:mirror_map} for congestion function \eqref{eq:inv_sqrt_mirror_map_new} point by point below.
\begin{itemize}
		\item  It is straightforward to see that point 1 holds.
		\item Point 2(a). For $\bq$ such that $\nq\in\bar{\mathcal{B}}(\bof)$ and $q_j>0$ for any $j\in V$, we have, by definition of $\nq\in\bar{\mathcal{B}}(\bof)$,
		\begin{align*}
			\textup{LHS of \eqref{eq:cond-eq-1}}\geq 2m\alpha\, .
		\end{align*}
		On the other hand, we have for $K = \Omega(m^2)$, we have
		\begin{align*}
			\textup{RHS of \eqref{eq:cond-eq-1}}= O\left(\frac{1}{K}\cdot m^{-1/2}\cdot  (m\cdot K^{-1/2})^{-3/2}\right)
		\end{align*}
		Here the RHS of \eqref{eq:cond-eq-1} is maximized when $q_j=0$.
		Therefore \eqref{eq:cond-eq-1} holds for $K\geq K_1 = \Omega\left(\max\left\{m^2,\frac{1}{m^{12} \alpha^4}\right\}\right)$.
		For $\bq$ such that $\nq\in\bar{\mathcal{B}}(\bof)$ and $q_j=0$ for some $j'\in V$, we have
		\begin{align*}
			\textup{LHS of \eqref{eq:cond-eq-1}}\geq \alpha \cdot \Omega\left(m^{-1/2}\cdot (m\cdot K^{-1/2})^{-1/2} - m\right)\, ,
		\end{align*}
		which is obtained by plugging in $q_{j'}$.
		For $K=\Omega(m^2)$, we also have
		\begin{align*}
			\textup{RHS of \eqref{eq:cond-eq-1}}= O\left(\frac{1}{K}\cdot m^{-1/2}\cdot  (m\cdot K^{-1/2})^{-3/2} + 1\right)\, ,
		\end{align*}
		Using the analysis above, for $K\geq \Omega\left(1\right)$, the first term in the parentheses is $O(1)$.
		In this case we have $\textup{RHS of \eqref{eq:cond-eq-1}}= O(1)$.
		Therefore \eqref{eq:cond-eq-1} holds for $$K=\Omega\left(\max\left\{m^8,\frac{1}{m^{12}\alpha^4}\right\}\right)\, .$$
		
		%
		\item Point 2(b). Note that for $K=\Omega(m^2)$,
		\begin{align*}
			&\sup_{\bq,\bq'\in \Omega^{K}}\left(
			F(\nq) - F(\nq')
			\right) \\
			\leq \ &  O\left(m^{-1/2}\sup_{\bq,\bq'\in \Omega^{K}}
			\sum_{j\in V}\left(
			\sqrt{\bar{q}'_{j}}
			-\sqrt{\bar{q}_{j}}
			\right)
			\right) \\
			\leq\ & O\left(m^{-1/2} \max_{\bq'\in \Omega^K}\sum_{j\in V}
			\sqrt{\bar{q}'_{j}}
			\right)\\
			=\ & O(1)\, .
		\end{align*}
		Hence
		$
			M_1 = O(1)\, .
		$
		\item Point 2(c). Note that for $\nq\in\mathcal{B}_{\bof}$, we have $\bar{q}_j =\Omega\left(m^{-3}\right)$, hence
		\begin{align*}
			M_2 = \max_{\nq\in\mathcal{B}_{\bof}}\max_{j\in V} \max_{\bar{q}\in\left[\bar{q}_{j}[t]-\frac{1}{\tK},\bar{q}_{j}[t]+\frac{1}{\tK}\right]}f'_j(\bar{q})
			\leq
			m^{-1/2}\cdot m^{9/2}
			=
			O(m^4)\, .
		\end{align*}
		\item Point 2(d). Note that for $\nq\in\mathcal{B}_{\bof}$, we have $\bar{q}_j =\Omega\left(m^{-3}\right)$, hence point 2(d) holds for $K=\Omega(m^8)$.
	\end{itemize}
\end{proof}

\section{MBP Executes Dual Stochastic Mirror Descent on the SPP} 
\label{appen:MBP-as-MD}

\textbf{Review of the interpretation of BP as dual stochastic subgradient descent.} Rich dividends have been obtained by treating the (properly scaled) current queue lengths $\bq$ as the dual variables $\by$, resulting in the celebrated backpressure (BP, also known as MaxWeight) control policy, introduced by  \citet{tassiulas1992stability}\pqa{Note to self: There is a citation formatting problem, which I fixed for Nicholson submissions, etc, but not this version.}, see also, e.g., \cite{stolyar2005maximizing,eryilmaz2007fair}. 
Formally, BP sets the current value of $\by$ to be proportional to the current normalized queue lengths, i.e., $\by[t] = c\cdot\bar{\bq}[t]$ for some $\bar{\bq} \in \Omega$ defined, e.g., as in \eqref{eq:normalized_queue}, and some $c> 0$ and greedily maximizes the inner problem in \eqref{eq:partial_dual} 
for every origin $j$ and destination $k$, i.e.,
\begin{align}
x_{j^*\tau k^*}^{\textup{BP}}[t]=
\left\{
\begin{array}{ll}
1 & \textup{ if } (j^*,k^*)=\textup{argmax}_{j\in\cP(\tau),k\in\cD(\tau)}w_{j\tau k} +
c \bar{q}_j[t] - c \bar{q}_k[t]\geq 0\textup{ and }q_j[t]>0\, ,\\
0 & \textup{ otherwise}\, .
\end{array}
\right.
\label{eq:VBP-defn}
\end{align}
The BP policy can be viewed as a \textit{stochastic subgradient descent (SGD)} algorithm on the dual problem \eqref{eq:partial_dual}, when the current state is in the \textit{interior} of the state space, i.e., when $q_j>0$ for all $j\in V$ \citep{huang2009delay}.
To see this, denote the subdifferential (set of subgradients) of function $g^t(\cdot)$ at $\by$ as $\partial g^t(\by)$.
Observe that \emph{the expected change of queue lengths under BP is proportional to the negative of a subgradient of $g^t(\cdot)$ at $\by = c\cdot\bar{\bq}[t]$}, in particular 
\begin{align}
\hspace{-0.2cm}-\frac{\tK}{c} \mathbb{E}[\by[t+1]-\by[t]]
= - \mathbb{E}[\bq[t+1] -\bq[t] ]=
\sum_{\tau \in \cT}\phi_{\tau}\sum_{j\in\cP(\tau),k\in\cD(\tau)}
x_{j\tau k}^{\textup{BP}}[t](\boe_j - \boe_k)
\in\partial g^t(\by[t])
\label{eq:subgrad-VBP}\, ,
\end{align}
where the first equality follows from the definition $\by[t]= c\cdot\bar{\bq}[t]$ (and the definition of normalized queue length \eqref{eq:normalized_queue}) and second equality is just the expectation of the system dynamics \eqref{eq:system-dynamics-jea}. Here $\sum_{\tau \in \cT}\phi_{\tau}\sum_{j\in\cP(\tau),k\in\cD(\tau)}
x_{j\tau k}^{\textup{BP}}[t](\boe_j - \boe_k)
\ \in\ \partial g^t(\by[t])$ follows from direct observation.
Eq. \eqref{eq:subgrad-VBP} shows that the evolution of $\by[t]$ when $\bq[t]>0$ is exactly an iteration of SGD with step size $\frac{c}{\tK}$.

\textbf{MBP executes dual stochastic mirror descent.} 
A notable property of MBP (given in Algorithm \ref{alg:mirror_bp}) is that it executes stochastic mirror descent on the partial dual problem \eqref{eq:partial_dual} (with flow constraints dualized), with $\nq[t]$ given by \eqref{eq:normalized_queue} being the mirror point and the inverse mirror map being the (vector) congestion function $\mathbf{f}(\nq)\triangleq [f(\bar{q}_1),\cdots,f(\bar{q}_m)]^{\T}$.
Mathematically, if $\bq>0$, we have
\begin{align}
  - \mathbb{E}[\bq[t+1] -\bq[t] ]=
\sum_{\tau \in \cT}\phi_{\tau}\sum_{j\in\cP(\tau),k\in\cD(\tau)}
x_{j\tau k}^{\textup{MBP}}[t](\boe_j - \boe_k)
 \in \partial g^t(\by) \Big |_{\by = \bof(\bar{\bq}[t])}
\label{eq:subgrad-MBP}\, ,
\end{align}
where $\bx^{\textup{MBP}}[t]$ is the control defined in Algorithm \ref{alg:mirror_bp}; notice that the entry rule $\bx^{\textup{MBP}}[t]$ has the same form as that for BP \eqref{eq:VBP-defn} except that it uses a general congestion function $f(\bar{q}_j)$, leading to \eqref{eq:subgrad-MBP} for MBP via the same reasoning that led to \eqref{eq:subgrad-VBP} for BP.
Thus, MBP performs stochastic mirror descent on the partial dual problem \eqref{eq:partial_dual}, which generalizes the previously known fact that BP performs stochastic gradient descent.
A main advantage of mirror descent over gradient descent is that it can better capture the geometry of the state space via an appropriate choice of mirror map \citep[see, e.g.,][]{beck2003mirror}.
%
In our setting, the congestion function $\mathbf{f}(\nq)$ is the inverse mirror map and can be flexibly chosen.

\end{document}